\newtheorem{Theorem}{Theorem}
\newtheorem{Corollary}[Theorem]{Corollary}
\newtheorem{theorem}{Theorem}[section]
\newtheorem{lemma}[theorem]{Lemma}
\newtheorem{corollary}[theorem]{Corollary}
\newtheorem{proposition}[theorem]{Proposition}
\newtheorem{scholium}[theorem]{Scholium}
\theoremstyle{definition}
\newtheorem{example}[theorem]{Example}
\theoremstyle{remark}
\newtheorem*{Remark}{Remark}
\numberwithin{equation}{section}
\newcommand{\ot}{\otimes}
\newcommand{\ra}{\rightarrow}
\newcommand{\BR}{\mathbb{R}}
\begin{document}

\title[Cube Diagrams]{Cube diagrams and $3$-dimensional Reidemeister-like moves for knots}

\author[S. Baldridge]{Scott Baldridge}
\author[A. Lowrance]{Adam Lowrance}

\thanks{This article was published in the Journal of Knot Theory and Its Ramifications, {\bf 21} (2012) no. 5, 1--39, DOI: 10.1142/S0218216511009832 \copyright~World Scientific Publishing Company.  An electronic version is available at www.worldscinet.com/jktr.}

\address{Department of Mathematics, Louisiana State University \newline
\hspace*{.375in} Baton Rouge, LA 70817, USA} \email{\rm{sbaldrid@math.lsu.edu}}

\address{Department of Mathematics, Vassar College \newline
\hspace*{.375in} Poughkeepsie, NY 12604, USA} \email{\rm{adlowrance@vassar.edu}}

\subjclass{}

\begin{abstract}
In this paper we introduce a representation of knots and links called a cube diagram.  We show that a property of a cube diagram is a link invariant if and only if the property is invariant under five cube diagram moves.  A knot homology is constructed from cube diagrams and shown to be equivalent to knot Floer homology.

%Finally, a new computable invariant is described as an application of cube diagrams and the homology theory.
\end{abstract}

\maketitle
%----------------------------------------------------------------------------------------------%
%----------------------------------------------------------------------------------------------%

\section{Introduction}

Reidemeister (1926) and Markov (1936) each described moves on 2-dimensional representations of links and proved when two representations were equivalent.  The structure of these representations and the limited number of moves makes these theorems useful for proving invariants.  In this paper we prove a 3-dimensional version of such a theorem.

\smallskip

A cube diagram is a representation of a piecewise linear embedding of a link into a 3-dimensional Cartesian grid.  Intuitively, this embedding can be thought of as a knot or link  in a $[0,n]\times [0,n] \times [0,n]$ cube (using $xyz$ coordinates) for some positive integer $n$ such that the link projections of the cube to each coordinate plane ($x=0$, $y=0$, and $z=0$) are grid diagrams (cf. Section 2).

\medskip

 A cube move  takes one cube diagram to another cube diagram and corresponds to a special ambient isotopy of the link.   There are two types of cube moves: cube commutation and cube stabilization moves  (cf. Section 3).   These moves are characterized by their projections to the three coordinate planes; they correspond to grid commutation and grid stabilization moves in each of the three projections.   All cube commutation moves and cube stabilization moves are generated up to symmetry of the cube diagram by the cube stabilization move in Figure~\ref{cubestab} and the four cube commutation moves in Figure~\ref{cubecom}.  Below is an example of a sequence of cube diagrams moves showing that the knot on the left is equivalent to the unknot.  

\begin{figure}[H]
\includegraphics{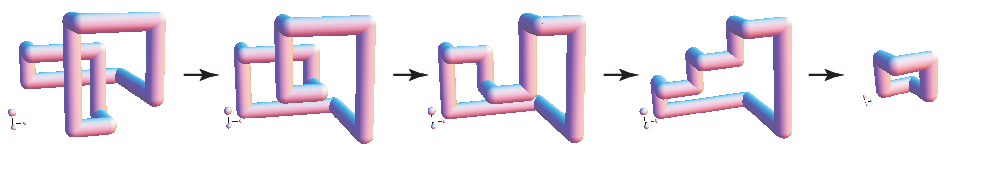}
\caption{The first three isotopies are cube commutation moves and the last is an application of two cube (de)stabilization moves.}
\end{figure}

Our main theorem is:

\medskip

\begin{Theorem}\label{maintheorem1}
Two cube diagrams represent ambient isotopic oriented links if and only if one can be obtained from the other by a finite sequence of cube commutation moves or cube stabilization moves.
\end{Theorem}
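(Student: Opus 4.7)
The plan is to reduce the theorem to Cromwell's corresponding result for grid (arc-presentation) diagrams \cite{Cromwell}. Every cube diagram has natural coordinate projections that produce grid diagrams of the same underlying link, so both directions of the theorem can be organized around one such projection, say onto the $xy$-plane.

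For the forward direction, I would verify one move at a time that each cube stabilization/destabilization and each cube commutation induces an ambient isotopy of the embedded piecewise-linear link in $\RR^3$. In each case the isotopy is elementary: a stabilization subdivides a rectangular piece of a strand into a staircase of shorter pieces, while a commutation slides one parallel slice past an adjacent slice whose contents do not interact with it, so the isotopy is supported in a thin rectangular region disjoint from the rest of the link.

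For the converse, given two cube diagrams $C$ and $C'$ of ambient isotopic links, project them to grid diagrams $G$ and $G'$. By Cromwell's theorem, $G$ and $G'$ are related by a finite sequence of grid stabilizations/destabilizations and grid commutations. The main step would be a lifting lemma: any such grid move can be realized by a finite sequence of cube moves between any two cube diagrams projecting to $G$ and $G'$. In particular, any two cube diagrams sharing the same grid projection must be shown to differ by cube commutations (possibly aided by stabilizations), since the $z$-heights of the horizontal segments are not recorded in the grid diagram.

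The main obstacle will be this last lifting step, especially the subcase of reconciling two cube diagrams with identical grid projection but different $z$-assignments. I anticipate treating it by a swap-and-sort argument: cube commutations should allow the interchange of the $z$-levels of any two horizontal arcs whose $xy$-supports are disjoint, and when two arcs do overlap in the projection, intermediate cube stabilizations can be used to pull them apart before swapping and then recombine them afterward. Controlling these auxiliary moves carefully (so that they do not introduce new obstructions), applying the same argument to the other two coordinate directions, and verifying that the resulting sequence terminates are where the most delicate bookkeeping will be required.
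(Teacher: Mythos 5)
Your skeleton---prove each cube move is an isotopy, then for the converse project to grid diagrams, invoke the grid-moves theorem, and lift the resulting sequence back to the cube---is exactly the paper's strategy, and your identified subcase (two cube diagrams with the same $(x,y)$-projection but different $z$-assignments) is indeed handled at the end of the paper's argument by sorting the $z$-flats. But your plan misses the central difficulty of the lifting step, and the part you do flag as hard is actually the easy part. The real obstruction is that a grid commutation in the $(x,y)$-projection, even when the two flats do not interleave, can fail to lift to a legal cube commutation because the interchange violates the \emph{crossing conditions in the other two projections} (the $(y,z)$- and $(z,x)$-projections), or because the corresponding flat interchange is not even an isotopy of the embedded link (the paper's Figure~\ref{badcom2}). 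Your ``swap-and-sort'' heuristic only polices interactions visible in the $xy$-plane; it says nothing about the over/under data that the cube diagram carries in the two transverse directions, and a swap that is innocuous in the $xy$-projection can silently create an illegal crossing elsewhere.

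The paper needs two substantial pieces of machinery to close this gap, neither of which appears in your plan. First, it introduces an intermediate, weaker class of moves (``cube bend moves'') that ignore the crossing conditions, and proves that any grid-move sequence can be modified---via a partial ordering on $z$-cube bends and a ``barrier'' analysis, with extra stabilizations inserted to break comparability---so that it lifts to a sequence of stabilizations and cube bend moves (Lemma~\ref{cubebendlemma}). Second, it proves an Isotopy Lemma (Lemma~\ref{biglemma}) showing that a segment can be pushed through a possibly knotted region of the diagram using only genuine cube moves, by realizing Reidemeister~II and~III moves in the transverse projections as cube stabilizations and commutations; this is what converts each cube bend move into legal cube moves. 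Finally, termination is genuinely delicate because the Isotopy Lemma spawns new segments that must themselves be carried along by later moves; the paper tracks this with a labeling/bookkeeping argument. Without an analogue of the Isotopy Lemma---some mechanism for repairing crossing-condition violations in the projections you are not sorting---your lifting step does not go through.
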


\medskip
As an
immediate corollary, we get a simple new way to check for knot and link invariants using the generating cube moves depicted in Figures \ref{cubestab} and \ref{cubecom}.

\medskip

\begin{Corollary}\label{maincorollary} Any property of a cube diagram that does not change under the 5 generating cube commutation and cube stabilization moves is an invariant of the link.
\end{Corollary}

\medskip

Our result is structurally different from Reidemeister's and Markov's in that we work directly with ambient isotopies of $\BR^3-L$. In creating cube diagrams, we were motivated by the search for a 3-dimensional data structure that was rigid enough to be able to easily define invariants, yet robust enough to represent all links and flexible enough that only a few types of moves were needed to transform one cube diagram of a link to any other cube diagram of the same link.  That study lead us to put strong conditions and symmetries on how $L$ is embedded in $\BR^3$, so strong in fact that the types and number of possible ambient isotopy moves become severely limited.  Thus, the conditions we impose in defining cube diagrams (marking and crossing conditions described in Section \ref{sec:cube_dia}) reduce the total number of generating isotopies to only 5 moves that take one cube diagram to another cube diagram of the same link.  

%In fact, using cube diagrams and the main theorem, one can conclude that the space of all knots and links up to smooth ambient isotopy can be described as a set of classes of data structures defined by a finite number of lattice points in a 3-dimensional cube modulo 5 simple cube move relations on those points.

\medskip

There are other moves between piecewise linear  links such as triangle moves or lattice moves (\cite{DH}, cf. \cite{Prz}).
For example, the triangle move replaces a segment of a piecewise linear link with two additional segments if the three segments form a triangle and the segments and the interior of the triangle are disjoint from the link. The placement of the two additional segments is determined by the placement of the vertex incident with both segments. For any given piecewise linear embedding of a link, there are infinitely many different triangle moves that can be performed. Cube moves, on the other hand, differ from triangle moves and other piecewise linear moves because there are only finitely many instances of commutation and stabilization cube moves that can be performed on any given cube diagram. Hence, one has the desired control similar to that of Reidemeister or Markov moves, but in $3$ dimensions instead of $2$.

\medskip

While cube diagrams project to grid diagrams, grid diagrams rarely lift to cube diagrams. In a recent note, the first author and McCarty \cite{note} show that, for example, only about 20\% of size 8 grid diagrams of nontrivial knots  lift to cube diagrams, and that this percentage decreases as the size of the grid increases.  The sparsity of cube diagrams compared to grid diagrams is advantageous: cube diagrams can be used to develop strictly stronger invariants than grid diagrams. For example, the {\em cube number} of a link is the minimum size cube diagram that represents that link. Similarly, the {\em grid number} (or {\em arc index}) of a link is the minimum size grid diagram that represents that link. The grid number of a link is equal to the grid number of its mirror image. McCarty \cite{private} has shown that cube number can distinguish a knot from its mirror image while grid number cannot. For example, the cube number of the left-handed trefoil is five while the cube number of the right-handed trefoil is seven. Additionally, McCarty \cite{Mc} showed that a Legendrian version of cube number can distinguish between Legendrian knots with the same underlying knot type.

\medskip

%Note that Theorem \ref{maintheorem1} gives a new way to define and prove the existence of {\em smoothly defined} invariants. One can produce invariants using metrics, smooth functions or $p$-forms, sections of Spin$^c$ bundles, solutions to PDE's, etc. on the smooth manifold $S^3-\mbox{nbd}(L)$ in such a way that the manifold is represented by a cube diagram (cf.~\cite{Witten}).  Our main theorem can also be used to show that two invariants are related: for instance one can calculate both invariants for a suitable base case, show they are equivalent in that base case, and then prove that both invariants transform the same way under cube stabilizations and cube commutations. This method gives a potentially powerful method for showing that a combinatorially defined invariant (like knot Floer homology) is equivalent to a smoothly defined invariant (like Seiberg-Witten Floer homology for complements of knots).

\medskip

As an example of an invariant coming from cube diagrams, we present a  filtered homology theory for a link $L$.  The chain complex $C^-_y(\Gamma)$ of the homology theory is generated by cube states---certain configurations of lattice points on the $3$-dimensional Cartesian grid.  Each cube state has an associated Maslov and Alexander grading. The differential $\partial^-_y$ counts the number of empty cylinders between two cube states, and decreases the Maslov grading by one.  It is then easy to check that $(\partial^-_y)^2=0$, making $(C^-_y(\Gamma),\partial^-_y)$ into a filtered chain complex.  See Section~\ref{cubehomology} for definitions.  The homology of $(C^-_y(\Gamma),\partial^-_y)$ is invariant under cube stabilization and cube commutation moves, making it an invariant of the link by Corollary~\ref{maincorollary}.  We prove:

\medskip

\begin{Theorem}
\label{maintheorem2} Let $\Gamma$ be a cube diagram representing $L$ and $CH^-(L)$ be the homology of the complex $(C_y^-(\Gamma),\partial^-_y)$. Then
$$CH^-(L)\cong HFK^-(L)\otimes HFK^-(L),$$
where $HFK^-(L)$ is the ``minus" version of knot Floer homology with $\mathbb{Z}/2\mathbb{Z}$ coefficients.
\end{Theorem}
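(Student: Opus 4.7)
The strategy is to identify the cube chain complex $(C_y^-(\Gamma),\partial_y^-)$ with a tensor product of two chain complexes, each computing $HFK^-(L)$, and then invoke the Künneth formula. Since a cube diagram sits in a three-dimensional Cartesian grid, it has several natural coordinate-plane projections; I would first show that two of these projections produce honest grid diagrams $G_1$ and $G_2$, each representing the oriented link $L$. This should follow directly from the structural definition of a cube diagram in Section~\ref{invariantmoves}: the $O$'s and $X$'s of each projected grid diagram are recovered by recording the vertical/horizontal segments of the PL-embedded link in the corresponding coordinate plane.

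Next I would match generators by arguing that the data of a cube state is equivalent to the data of a pair $(x,y)$, where $x$ is a grid state for $G_1$ and $y$ is a grid state for $G_2$, and that the Maslov and Alexander gradings of a cube state split as the sum of the corresponding gradings of $x$ and $y$. This would yield a bigraded isomorphism
\[
C_y^-(\Gamma) \;\cong\; CFK^-(G_1) \otimes CFK^-(G_2).
\]
The step I expect to be hardest is matching the differentials: I would analyze an empty cylinder between two cube states and show that every such cylinder sits entirely over an empty rectangle in exactly one of $G_1$ or $G_2$, with the other coordinate state held fixed. This would give
\[
\partial_y^- \;=\; \partial_1 \otimes \mathrm{id} \;+\; \mathrm{id} \otimes \partial_2,
\]
identifying the cube complex with the tensor product of the two grid-diagram complexes. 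The subtle points are (i) verifying that the three-dimensional emptiness condition decouples cleanly into the conjunction of two planar emptiness conditions, with no spurious cylinders unaccounted for; and (ii) ruling out ``mixed'' cylinders that project nontrivially in both planes simultaneously, so that the tensor-product Leibniz structure actually holds on the nose. Orientation/sign issues can be handled by working over $\mathbb{F}_2$.

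Once the tensor product description of the chain complex is established, the Künneth formula (which is automatic over $\mathbb{F}_2$, or more generally over a field, and holds over $\mathbb{F}_2[U]$ because grid complexes are free) gives
\[
CH^-(L) \;\cong\; H_*\bigl(CFK^-(G_1)\bigr) \otimes H_*\bigl(CFK^-(G_2)\bigr) \;\cong\; HFK^-(L) \otimes HFK^-(L),
\]
since each $G_i$ is a grid diagram for the same link $L$. Invariance of $CH^-(L)$ under cube stabilizations and commutations, guaranteed by Corollary~\ref{maincorollary}, provides an external consistency check: the right-hand side is manifestly a link invariant.
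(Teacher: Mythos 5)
Your proposal is correct and follows essentially the same route as the paper: the paper's Proposition~\ref{staterel} and Theorem~\ref{compthm} establish exactly the bijection of cube states with pairs of grid states, the additivity of gradings, and the identification $\partial_y^- = \partial_{xy}^-\otimes\mathrm{id}+\mathrm{id}\otimes\partial_{yz}^-$ (your worry about ``mixed'' cylinders is resolved because the definition of an $(x,y)$-cylinder already requires the $(y,z)$-projections of the two states to agree), after which the Manolescu--Ozsv\'ath--Sarkar theorem and K\"unneth over $\mathbb{Z}/2\mathbb{Z}$ give the result.
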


\medskip

The organization of this paper is as follows: In Section 2 we give the definition of a cube diagram and show that every link can be represented by a cube diagram.  In Section 3 we describe cube stabilization and commutation moves and prove Theorem~\ref{maintheorem1} using them.  Cube homology theory is developed in Section 4 and is proved to be equivalent to knot Floer homology.

%----------------------------------------------------------------------------------------------%
%----------------------------------------------------------------------------------------------%

\medskip
\section{Cube Diagrams}
\label{sec:cube_dia}
\medskip

\subsection{Definition}
Let $n$ be a positive integer and $C=[0,n]\times [0,n]\times [0,n] \subset \mathbb{R}^3$ thought of as $3$-dimensional Cartesian grid, i.e., a grid with integer valued vertices.  The number $n$ is called the {\em cube number}.  A \textit{flat} is any cuboid (a right rectangular prism) with integer vertices in $C$ such that there are two orthogonal edges of length $n$ with the remaining orthogonal edge of length $1$. Each edge of a flat must be parallel to one of the coordinate axes. A flat with an edge of length 1 that is parallel to the $x$-axis, $y$-axis, or $z$-axis is called an {\em $x$-flat}, {\em $y$-flat}, or {\em $z$-flat} respectively.

\medskip

Next we describe a way to specify an embedding of a link in the cube $C$.    A marking is a point with half integer coordinates in $C$ labeled by either an $X$, a $Y$, or a $Z$.  For each ${1 \times 1 \times 1}$ cube with integer vertices in $C$ assign either zero or one marking.  The set of markings must satisfy the following {\em marking conditions}:

\begin{itemize}
    \item each flat has exactly one $X$, one $Y$, and one $Z$ marking;\\

    \item the markings in each flat form a right angle such that each ray is parallel to a coordinate axis;\\

    \item for each $x$-flat, $y$-flat, or $z$-flat, the marking that is the vertex of the right angle is an $X, Y,$ or $Z$ marking respectively.
\end{itemize}

\medskip

Denote the set of $X$'s in $C$ by $\mathcal{X}$. Similarly define $\mathcal{Y}$ and $\mathcal{Z}$. Note that the cube $C$  is canonically oriented by the standard orientation of $\BR^3$ (right hand orientation).

%A \textit{cylinder of $\Gamma$} is any cuboid contained in $\Gamma$ with integer vertices
%that has an edge of length $n$. Later in this paper we will take $\Gamma$ to be the fundamental %domain of a 3-torus and allow for solid cylinders in that torus.  For now we only need the %following definition: a \textit{1-cylinder of $\Gamma$} is a cylinder that has two orthogonal %edges of length 1.

\medskip

\begin{Remark}  An alternate but equivalent formulation of the marking conditions is given by the first author in a recent paper \cite{Bald}.  In the alternate formulation, the third condition is changed by a simple permutation of the letters ($X \mapsto Z$, $Y\mapsto X$, $Z\mapsto Y$).
\end{Remark}

\medskip

Embed an oriented piecewise linear link $L$ into $C$ by connecting pairs of markings with a line segment whenever two of their corresponding coordinates are the same.   Each segment is oriented to go from either $X$ to  $Y$,  $Y$ to $Z$, or from $Z$ to $X$. The markings in each flat define two perpendicular segments of the link $L$ joined at a vertex, call the union of these segments a {\it cube bend}. If a cube bend is contained in an $x$-flat, we call it an {\it $x$-cube bend}. Similarly, define {\it $y$-cube bends} and {\it $z$-cube bends}.

\medskip

Arrange the markings in $C$ so that the following {\em crossing conditions} hold:
\begin{itemize}
\item At every intersection point of the $(x,y)$-projection of $L$, the segment parallel to the $x$-axis has smaller $z$-coordinate than the segment parallel to the $y$-axis.\\

\item At every intersection point of the $(y,z)$-projection of $L$, the segment parallel to the $y$-axis has smaller $x$-coordinate than the segment parallel to the $z$-axis.\\

\item At every intersection point of the $(z,x)$-projection of $L$, the segment parallel to the $z$-axis has smaller $y$-coordinate than the segment parallel to the $x$-axis.
\end{itemize}

\medskip

If the cube $C$, the markings $\{\mathcal{X},\mathcal{Y},\mathcal{Z}\}$, and the link $L$ satisfy the marking and crossing conditions, then define $\Gamma = (C, \{\mathcal{X},\mathcal{Y},\mathcal{Z}\}, L)$ to be a {\em cube diagram} representing the (oriented) link $L$.    When the context is clear, we will often use the term cube diagram to describe either the triple or the embedded link it represents.

\medskip

\begin{figure}[H]
\includegraphics[scale=1]{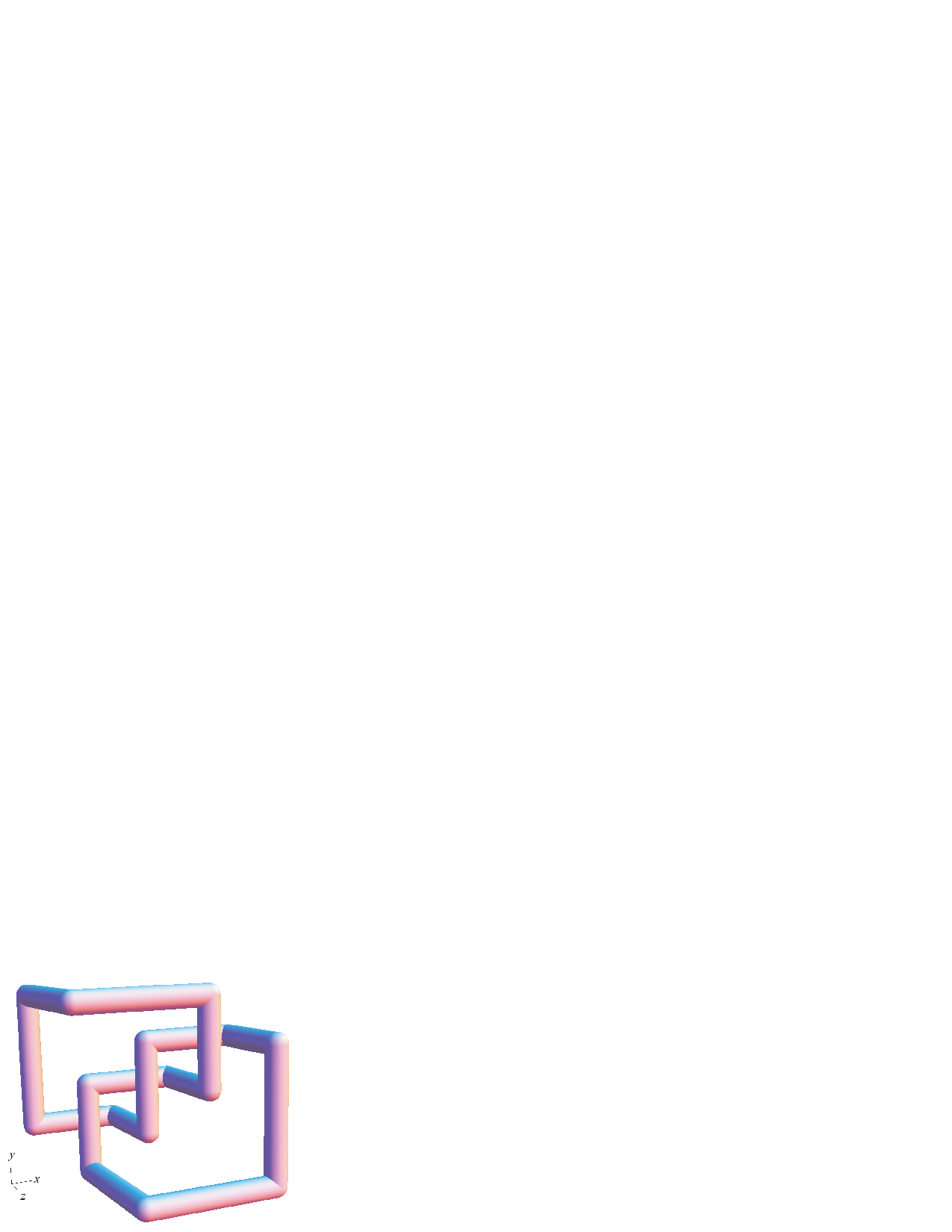}
\caption{An example of the trefoil represented by a cube diagram.  Computer programs are available for drawing and computing invariants of cube diagrams \cite{prog1, prog2}.} \label{trefoil}
\end{figure}

\medskip

\subsection{Grid Diagrams}

Cube diagrams have a nice relationship with  grid diagrams, combinatorial structures that encode information about link projections.   Grid diagrams were introduced by Brunn~\cite{Brunn} over a 100 years ago, and discussed more recently in Cromwell~\cite{Cromwell} and Dynnikov~\cite{Dyn}.  A {\it grid diagram} as it is usually defined in the literature is an $n\times n$ subset of the Cartesian grid such that each cell contains either zero or one marking.  A marking is a point with half integer coordinates labeled by either an $X$ or an $O$. The markings must satisfy the following conditions:

\medskip

\begin{itemize}

\item each row contains exactly one marking labeled $X$ and exactly one marking labeled $O$, and\\

\item each column contains exactly one marking labeled $X$ and exactly one marking labeled $O$.
\end{itemize}

\medskip

The grid diagram specifies an oriented link projection by drawing horizontal line segments in each row from the $O$ to the $X$ and vertical line segments in each column from the $X$ to the $O$. Moreover, at intersection points, the vertical segment are over-crossings. Let $\mathbb{X}$ and $\mathbb{O}$ denote the set of $X$ markings and $O$ markings respectively.

\medskip

We need an equivalent definition of a grid diagram that takes into account the orientation of the grid.  Given a $[0,n]\times [0,n] \subset \BR^2$ Cartesian grid with  labeled axes, define an orientation on the grid diagram by choosing an ordering of the axes: then ``rows'' and ``horizontal segments'' in the definition above are rows and segments parallel to the first axis chosen, and  ``columns'' and ``vertical segments'' are rows and segments parallel to the second axis chosen.  At each intersection point, the segment parallel to the second axis chosen is the over-crossing.  The orientation of the link itself is defined similarly.

\medskip

For example, the first picture in Figure~\ref{or} shows the  $(x,y)$-orientation and the second picture shows the $(y,x)$-orientation for a grid diagram.  Notice that changing the orientation of a  grid diagram changes the link to the reverse of its mirror.

\begin{figure}[H]
\includegraphics[scale=.3]{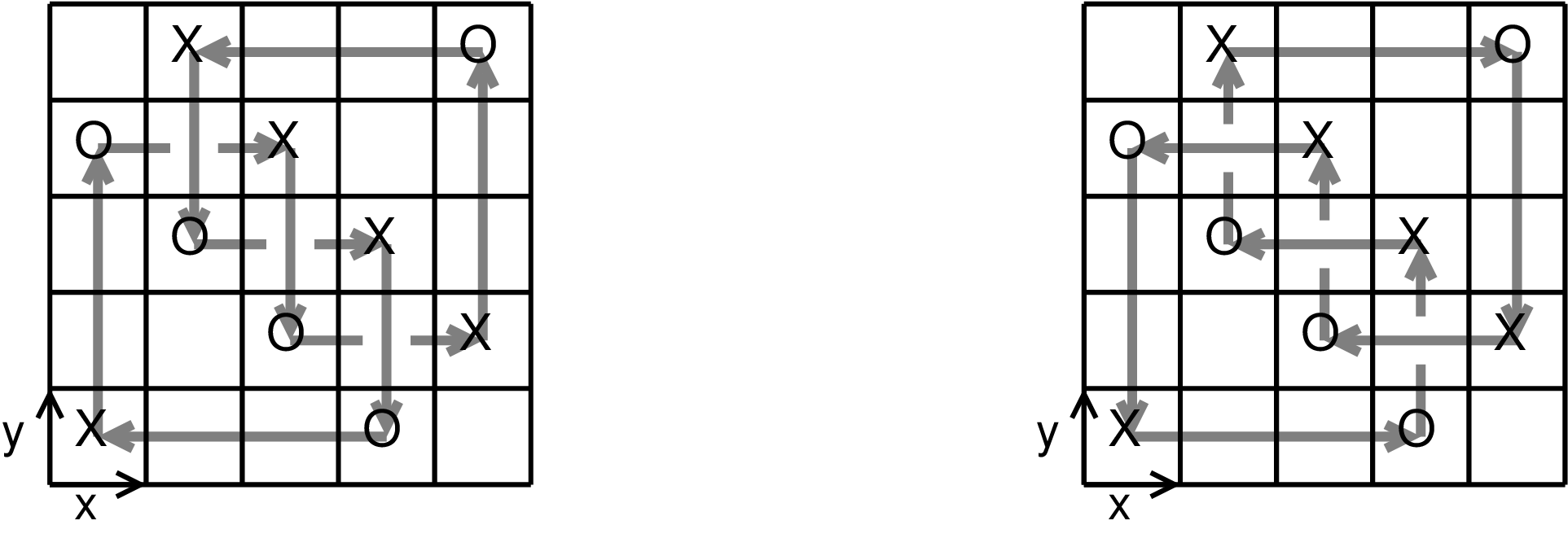}
\caption{The figure on the left is an $(x,y)$-oriented grid diagram and the figure on the right is a grid diagram with the same markings but with the $(y,x)$-orientation.} \label{or}
\end{figure}

\medskip

A cube diagram can be naturally projected onto an oriented grid diagram in a way that respects orientation, over-crossings, and markings.  As an example we show how to project the link in a cube diagram onto the $(x,y,0)$-plane.  Cube diagrams are populated by three types of markings ($X,Y,$ and $Z$) while grid diagrams only have two ($X$ and $O$); there is a preferred way of identifying the markings on the cube with the markings in the grid.  In this example, the $Z$ markings in the cube are projected to $X$ markings in the grid.   The other two markings, $X$ and $Y$, both get projected to $O$ markings in the grid.  (In general, for projections to the $(0,y,z)$-plane, the $X$ markings in the cube are projected to $X$ markings in the grid, and  for projections to the $(x,0,z)$-plane, the $Y$ markings in the cube are projected to $X$ markings in the grid.  In either case, the other markings in the cube are then projected to $O$ markings in the grid.)  With these identifications, notice that the conditions on the link in the cube and orientation of the link in the cube both correspond to the grid diagram with the $(x,y)$-orientation.  Call this identification of markings a {\em projection} of the cube diagram onto the $(x,y)$-oriented grid diagram and say that it is the {\em $(x,y)$-projection} of a cube diagram.

\begin{proposition}
\label{projection} A cube diagram projects onto the $(x,y)$-oriented grid diagram, the $(y,z)$-oriented grid diagram, and the $(z,x)$-oriented grid diagram.  Furthermore, the orientations of the grid diagrams are inherited from the orientation of the cube diagram.
\end{proposition}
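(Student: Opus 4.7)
The plan is to reduce the proposition to a single rigidity lemma about the geometry of link segments in a cube diagram, and then verify by direct inspection that the $(x,y)$-projection defined in the excerpt yields an $(x,y)$-oriented grid diagram. The key fact I would establish first is that in every cube diagram, every link segment travels in a direction determined by the labels of its endpoints: every $X\to Y$ segment is parallel to the $z$-axis, every $Y\to Z$ segment is parallel to the $x$-axis, and every $Z\to X$ segment is parallel to the $y$-axis. To prove this for an $X\to Y$ segment, fix an $X$ marking and call its link-successor $\Psi$. The marking $X$ is the vertex of the cube bend in its unique $x$-flat, and the outgoing arm of that bend is precisely the link segment $X\to\Psi$; since this arm lies in the $x$-flat, $X$ and $\Psi$ share an $x$-coordinate. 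Simultaneously, $X$ lies in a unique $y$-flat whose cube bend has some vertex $Y''$ with an arm from $X$ to $Y''$; this arm is also a link segment emanating from $X$, so by uniqueness of the link-successor $Y''=\Psi$, forcing $X$ and $\Psi$ to share a $y$-coordinate as well. The segment $X\to\Psi$ therefore lies in the $z$-direction, and the statements for $Y\to Z$ and $Z\to X$ follow by cyclically permuting $(X,Y,Z)$ and the roles of $x$-, $y$-, and $z$-flats.

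With this rigidity in hand, I would verify the $(x,y)$-projection directly. A row of the projected grid at $y$-level $j+1/2$ is the image of the unique $y$-flat at that height, which by the marking conditions contains one cube-$X$, one cube-$Y$, and one cube-$Z$. The rigidity lemma forces the cube-$X$ and cube-$Y$ in this flat to share an $x$-coordinate, so they project to a single grid cell; since both are identified with grid-$O$, this cell becomes the unique grid-$O$ of the row, while the cube-$Z$ projects to a distinct cell that becomes the unique grid-$X$. Applying the same reasoning to $x$-flats handles the columns. Orientation then follows: the segment $Y\to Z$ in a $y$-flat is parallel to the $x$-axis and runs from the projected cube-$Y$ (grid-$O$) to the projected cube-$Z$ (grid-$X$), matching the $(x,y)$-grid rule that horizontal segments go from $O$ to $X$; the segment $Z\to X$ in an $x$-flat is parallel to the $y$-axis and runs from grid-$X$ to grid-$O$, matching the vertical convention. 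Finally, the cube crossing condition stating that at every $(x,y)$-projection crossing the $x$-parallel strand lies below the $y$-parallel strand is exactly the $(x,y)$-grid rule that the second-axis strand is the overcrossing.

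The $(y,z)$- and $(z,x)$-projections then follow by applying the identical argument after cyclically permuting the axes; the cube-diagram definition, the projection identifications, and the crossing conditions are all symmetric under this permutation. The only step that requires real work is the rigidity lemma, which upgrades the purely local cube-bend data at each marking into the global statement that link segments always travel in the coordinate direction predicted by their endpoint labels. Once that is established, everything else is a direct comparison between the cube-diagram conditions and the grid-diagram definition.
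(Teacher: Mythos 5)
Your argument is correct, and it actually supplies more than the paper does: the paper states Proposition~\ref{projection} without a formal proof, relying on the preceding descriptive paragraph (which fixes the identification $Z\mapsto X_{\mathrm{grid}}$, $X,Y\mapsto O_{\mathrm{grid}}$ for the $(x,y)$-projection) and on Figure~\ref{gooddia}. The substantive content you add is the rigidity lemma, and it is exactly the right missing ingredient: without knowing that each $X\to Y$ segment is parallel to the $z$-axis, one cannot conclude that the cube-$X$ and cube-$Y$ of a given $y$-flat (or $x$-flat) land in the same grid cell, which is what makes each row and column contain exactly one $O$ rather than two. Your proof of the rigidity lemma leans on the uniqueness of the link-successor of a marking, i.e.\ on the definition's implicit assertion that the connecting procedure produces an honest link; that reading is legitimate, but it is worth noting that the lemma also follows from the marking conditions alone: if the arm from $X$ to the $Y$ of its $x$-flat were parallel to the $y$-axis, then $X$ and that $Y$ would lie in a common $z$-flat, and the right-angle condition there (whose vertex must be the $Z$ marking) would force that $Z$ to coincide with $X$ or with $Y$, a contradiction. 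With the rigidity lemma in hand, your verification of the row/column conditions, the $O$-to-$X$ orientation convention, and the overcrossing rule is a straightforward and correct matching of definitions, and the cyclic symmetry handles the other two projections as you say.
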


\medskip

Figure~\ref{gooddia} shows the different projections of a cube diagram of an unknot.

\begin{figure}[h]
\includegraphics[scale=.3]{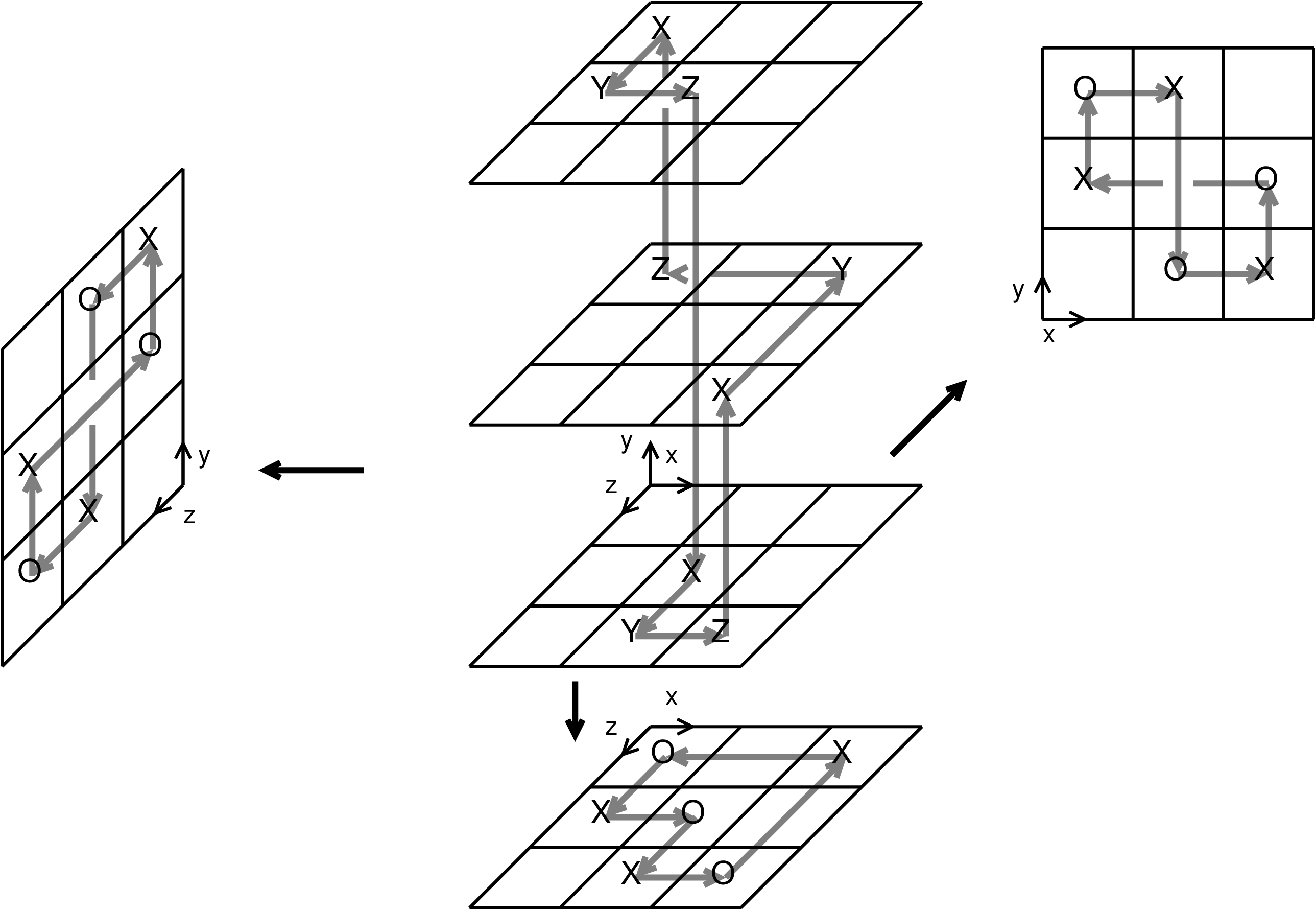}
\caption{A cube diagram can be depicted by stacking flats on top of one other, or by the projected $(x,y)$, $(y,z)$, and $(z,x)$-oriented grid diagrams.  Notice that it is possible to recover the cube diagram from any two of the projected grid diagrams.} \label{gooddia}
\end{figure}

\medskip

A corollary of the proposition above relates grid numbers to cube numbers of a link.  The {\em grid number of a link} is the minimum grid number over all grid diagrams of that link.  Similarly, the {\em cube number of a link} is the minimum cube number over all cube diagrams of the link.

\begin{corollary} The grid number of a link is less than or equal to the cube number of that link.
\end{corollary}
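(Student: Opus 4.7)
The plan is to derive this corollary directly from Proposition~\ref{projection}, which is the main workhorse. The idea is that any cube diagram yields (at least one) grid diagram of the same size representing the same link, so the minimum over grid diagrams cannot exceed the minimum over cube diagrams.

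First I would fix a link $L$ and let $n$ denote its cube number. By definition there exists a cube diagram $\Gamma$ of size $n$ representing $L$. Applying Proposition~\ref{projection}, the $(x,y)$-projection of $\Gamma$ is an $(x,y)$-oriented grid diagram $G$ on the $[0,n]\times[0,n]$ Cartesian grid, and the orientation of $G$ is inherited from that of $\Gamma$ so that $G$ represents the same oriented link $L$.

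Next I would observe that $G$ has grid number $n$, and so by definition the grid number of $L$ is at most $n$. Combining this with the choice of $n$ as the cube number of $L$ yields the stated inequality. The only subtlety worth checking is that the projection described in the paragraph preceding Proposition~\ref{projection} genuinely produces a valid grid diagram representing $L$ (i.e., that the markings, overcrossings, and orientation assemble correctly), but this is exactly the content of Proposition~\ref{projection} and so may be invoked directly. There is no substantive obstacle here; the corollary is a one-line consequence of the proposition once one unpacks the definitions of the two invariants.
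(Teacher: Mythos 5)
Your argument is correct and is exactly the intended (and essentially only) proof: the paper presents this as an immediate consequence of Proposition~\ref{projection}, namely that a minimal cube diagram of size $n$ projects to a grid diagram of the same size representing the same oriented link, so the grid number is at most $n$. Nothing further is needed.
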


McCarty \cite{private} has shown that for some links (including the right-handed trefoil) the inequality is strict.

%---------------------------------------------------------------------------------------------%

\medskip
\subsection{Existence} In this section we show that every link can be represented by a cube diagram. An efficient way to produce a cube diagram is to show that a grid diagram gives rise to a cube diagram.  But not every grid diagram does so:

\begin{example} There exists grid diagrams which cannot be the projections of any cube diagram. \label{ex:badgriddia}
\end{example}

To describe the example we need a bit of terminology.  A {\it bend} in a grid diagram for a link $L$ is a pair of segments in $L$ which meet at a common $X$ or $O$ marking (see Figure \ref{bend}).
\begin{figure}[H]
\includegraphics[scale=.3]{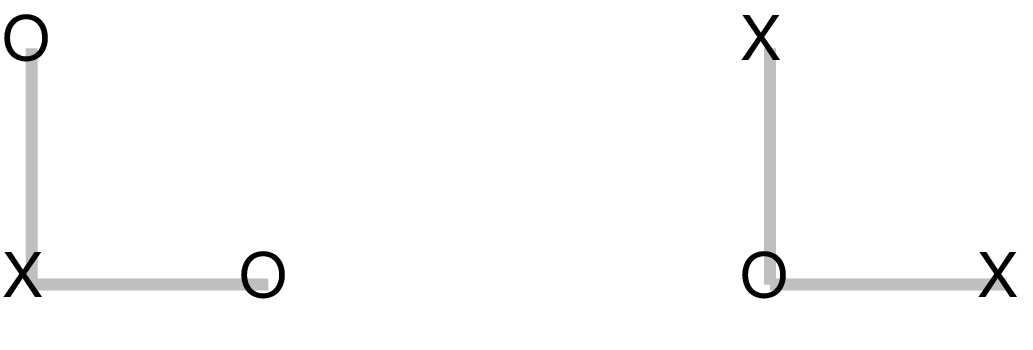}
\caption{An example of a bend in a grid diagram.} \label{bend}
\end{figure}
Each link component is the union of non-overlapping bends.  For a component of the link there are two partitions of non-overlapping bends depending on whether the two segments in each bend intersect in  an $X$ or $O$ marking.  Let $\mathcal{B}$ be a collection of such partitions, one partition for each component of the link (e.g., pick all bends that intersect at an $X$ marking). Then $|\mathcal{B}|$ equals the grid number of $G$. If a bend $b\in\mathcal{B}$ passes over some other segment of $L$ and passes under some other segment of $L$, then call $b$ {\it twisted}.

\medskip

\begin{figure}[H]
\includegraphics[scale = .4]{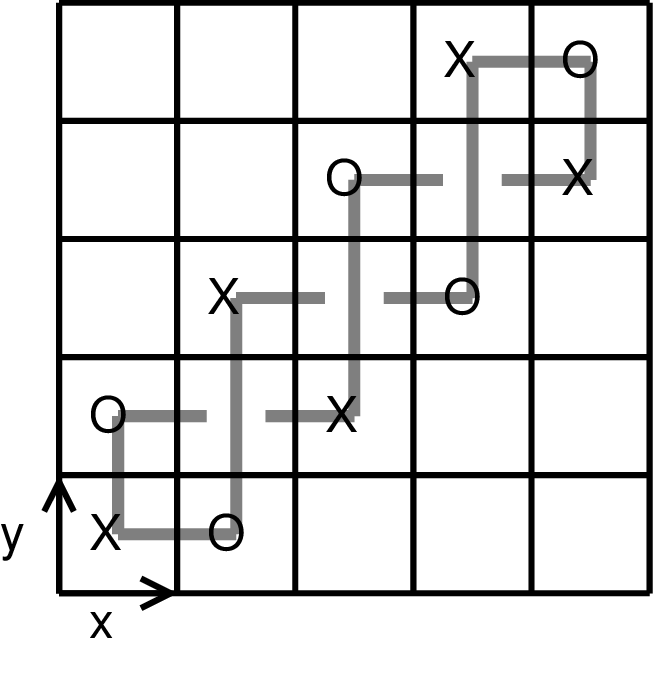}
\caption{This grid diagram cannot be a projection of a cube diagram.} \label{badgrid}
\end{figure}

The example is the grid diagram pictured in Figure~\ref{badgrid}.  Give the above grid diagram the $(x,y)$-orientation.  There are exactly two partitions of this grid diagram of the unknot into a set of bends. For either partition, the bends in the partition must correspond to the set of $z$-cube bends in the cube.  Note that for both partitions there is a pair of twisted bends $b_1$ and $b_2$ that intersect each other twice.  So if there existed a cube diagram for this partition that projected to the grid diagram, $b_1$ and $b_2$ would be $z$-cube bends in that cube, and therefore the $z$-coordinate for $b_1$ would both be greater than and less than the $z$-coordinate for $b_2$, which is a contradiction.

\medskip

However, a grid diagram with no twisted bends does potentially come from a projection of a cube diagram.  Such a grid diagram can be found from a given grid diagram by stabilizing.

\begin{lemma}\label{nobendtwistedlemma}
Every link $L\subset S^3$ has a grid diagram in which no bend is twisted.
\end{lemma}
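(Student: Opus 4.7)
The plan is to start with any grid diagram $G$ of $L$, fix the partition $\mathcal{B}$ whose bends have their corners at the $X$-markings (the argument for the $O$-partition is identical), and then stabilize repeatedly to untwist the bends one at a time. For each $B \in \mathcal{B}$, write $h_B$ for its horizontal segment (from an $O$-marking to the corner $X$-marking $X_0$) and $v_B$ for its vertical segment (from $X_0$ to another $O$-marking), and let $u_B$ and $o_B$ be the number of crossings on $h_B$ and $v_B$ respectively. Since vertical segments always overcross horizontal segments, $u_B$ is the number of undercrossings contributed by $B$ and $o_B$ the number of overcrossings, so $B$ is twisted iff $\min(u_B, o_B) \geq 1$. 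Define the \emph{total twist}
\[ T(G, \mathcal{B}) \;=\; \sum_{B \in \mathcal{B}} \min(u_B, o_B), \]
a nonnegative integer; the lemma reduces to producing a grid diagram of $L$ with $T = 0$.

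The key step is to show that if $T(G, \mathcal{B}) > 0$ then there is a stabilization $G \to G'$ (preserving $L$), inducing an $X$-partition $\mathcal{B}'$ of the bends of $G'$, with $T(G', \mathcal{B}') < T(G, \mathcal{B})$. Choose a twisted bend $B$ with corner $X_0$ and stabilize $G$ at $X_0$. Geometrically this is a local move that inserts a new unit-width row and column adjacent to $X_0$ on the side opposite $h_B$ and $v_B$; it replaces $X_0$ by two new $X$-markings joined by a new $O$-marking and splits the bend $B$ into two bends $B_1, B_2 \in \mathcal{B}'$. The insertion can be arranged so that $h_B$ becomes the horizontal segment of (say) $B_1$, inheriting all $u_B$ undercrossings of $B$ and no new ones, while $v_B$ becomes the vertical segment of $B_2$, inheriting all $o_B$ overcrossings. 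The short ``jog'' segments produced by the stabilization lie in the narrow strip immediately adjacent to $X_0$, through which no other strand of $L$ passes, so they contribute no crossings to $B_1$ or $B_2$. Hence $\min(u_{B_i}, o_{B_i}) = 0$ for $i = 1, 2$, and because the inserted row and column are disjoint from every other strand, all other bends of $\mathcal{B}$ persist in $\mathcal{B}'$ with unchanged crossing counts. Consequently
\[ T(G', \mathcal{B}') \;=\; T(G, \mathcal{B}) - \min(u_B, o_B) \;<\; T(G, \mathcal{B}). \]

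Since $T$ is a nonnegative integer that strictly decreases with each application of the key step, the process terminates in at most $T(G, \mathcal{B})$ stabilizations at a grid diagram whose chosen $X$-partition contains no twisted bend, proving the lemma. The one point that requires care is verifying the local stabilization picture---that the crossings of $B$ distribute onto $B_1$ and $B_2$ as claimed, and that no neighboring bend is affected. This reduces to a short case analysis on the four possible configurations of $h_B$ and $v_B$ around $X_0$, in each of which the appropriate direction of stabilization is readily exhibited. This local verification is the main obstacle, and it is essentially combinatorial bookkeeping.
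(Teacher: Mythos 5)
Your proof is correct and follows essentially the same route as the paper: stabilize at the $X$-marking of each twisted bend so that the horizontal segment (carrying the undercrossings) and the vertical segment (carrying the overcrossings) land in two separate bends joined by short crossing-free jogs. The explicit decreasing measure $T$ and the local verification you flag are just a more careful write-up of the paper's one-line argument and accompanying figure.
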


\begin{proof} Let $G$ be a grid diagram for $L$, and let $\mathcal{B}$ be the set of bends for $G$. If $b\in\mathcal{B}$ is a twisted bend, then replace $G$ with $G^\prime$, where $G^\prime$ is obtained from $G$ by a stabilization at the $X$ marking of $b$. This stabilization adds one to the size of the grid diagram, but breaks $b$ into two different bends, neither of which is twisted. See Figure \ref{bendmove}.
\end{proof}
\begin{figure}[H]
\includegraphics[scale=.3]{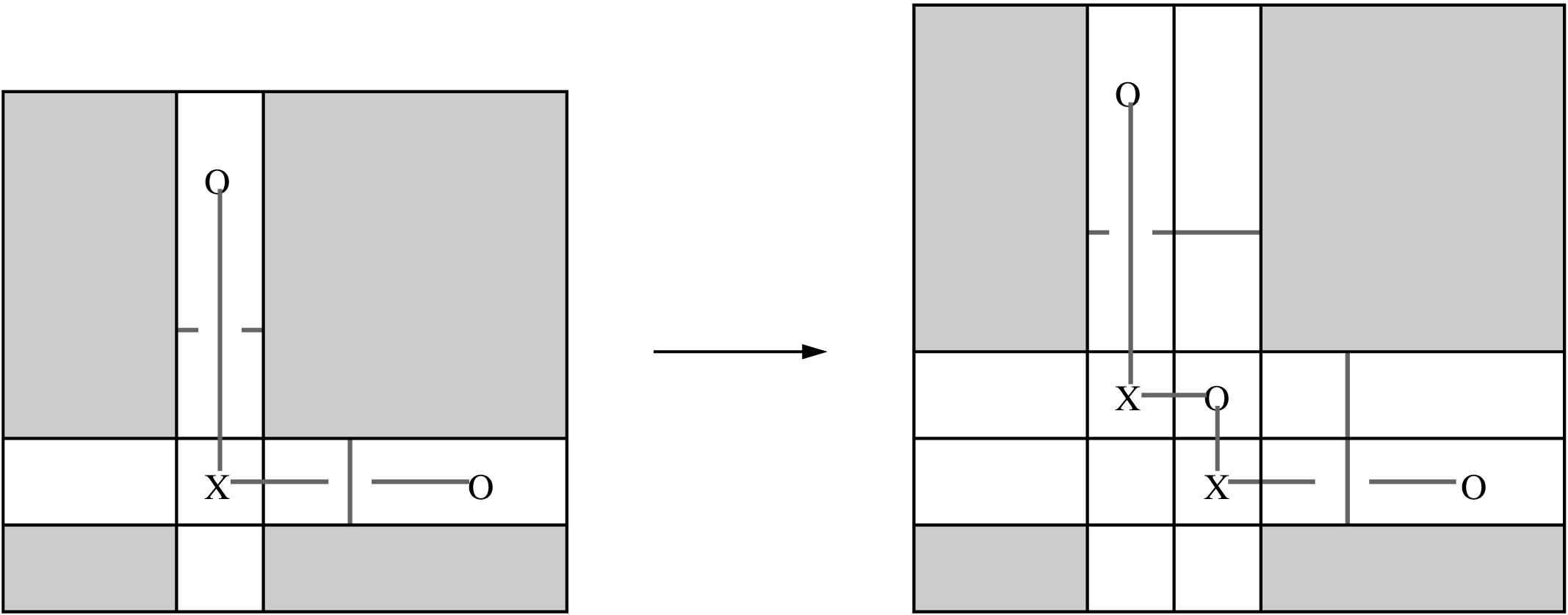}
\caption{Replace the twisted bend on the left with the stabilized picture on the right.} \label{bendmove}
\end{figure}

If $G$ is a grid diagram where no bend is twisted, then $\mathcal{B}$ can be naturally partitioned into three sets: bends which pass over other arcs, bends which do not contain any crossings, and bends which pass under other arcs, denoted $\mathcal{B}_{\mathrm{over}},\mathcal{B}_{\mathrm{neutral}},$ and $\mathcal{B}_{\mathrm{under}}$ respectively.

\medskip

\begin{proposition}
Every link $L$ in $S^3$ can be represented by a cube diagram.
\end{proposition}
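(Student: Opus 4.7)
The plan is to lift a well-chosen grid diagram for $L$ to a cube diagram by assigning a $z$-level to each bend. By the classical result of Brunn and Cromwell, every link $L$ admits a grid diagram $G$. By Lemma~\ref{nobendtwistedlemma}, we may assume after stabilization that the partition $\mathcal{B}$ of $L$ into bends meeting at $X$-markings of $G$ contains no twisted bend, so $\mathcal{B} = \mathcal{B}_\mathrm{over} \sqcup \mathcal{B}_\mathrm{neutral} \sqcup \mathcal{B}_\mathrm{under}$.

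Next, I would place $G$ in the $(x,y)$-plane of the cube $[0,n]^3$ and assign each bend $B$ a distinct $z$-level $h(B)$ so that all bends in $\mathcal{B}_\mathrm{over}$ receive the top $|\mathcal{B}_\mathrm{over}|$ values, bends in $\mathcal{B}_\mathrm{under}$ the bottom, and bends in $\mathcal{B}_\mathrm{neutral}$ fill the middle. The markings of the cube diagram are then forced: each grid-$X$ with vertex bend $B$ becomes a cube-$Z$ at $(x_B,y_B,h(B))$, and each grid-$O$ at $(x_O,y_O)$ spawns both a cube-$X$ at $z=h(B_\mathrm{col})$ and a cube-$Y$ at $z=h(B_\mathrm{row})$, where $B_\mathrm{col}$ (resp.~$B_\mathrm{row}$) is the bend whose grid-$X$ lies in the same column (resp.~row) as the $O$. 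The segments of $L$ then follow the orientation convention $X \to Y \to Z \to X$, forcing $X \to Y$ segments to be $z$-parallel, $Y \to Z$ segments $x$-parallel, and $Z \to X$ segments $y$-parallel.

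It remains to verify the cube-diagram conditions. The marking conditions hold because $h$ is a bijection onto the $n$ available $z$-levels and each row and column of $G$ contains exactly one $X$ and one $O$. The $(x,y)$-crossing condition is immediate: at any grid crossing, the vertical (over) segment belongs to a bend in $\mathcal{B}_\mathrm{over}$ and the horizontal (under) segment belongs to a bend in $\mathcal{B}_\mathrm{under}$ (since $\mathcal{B}$ has no twisted bends), so the $y$-parallel segment has strictly higher $z$ than the $x$-parallel one.

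The main obstacle is verifying the $(y,z)$- and $(z,x)$-crossing conditions, which concern the $z$-parallel segments placed at grid-$O$ locations. A $(y,z)$-crossing between the $y$-parallel segment of a bend $B$ (at $z=h(B)$, extending in column $c_B$) and the $z$-parallel segment at grid-$O$ $(x_O,y_O)$ (with $z$-span between $h(B_\mathrm{col})$ and $h(B_\mathrm{row})$) occurs exactly when $h(B)$ lies strictly between these two values and $y_O$ lies strictly inside the $y$-span of $B$'s vertical. One must show $c_B < x_O$ at every such crossing. The key observation is that such a crossing splits into two cases: either $c_B$ lies strictly inside the $x$-span of row $y_O$'s horizontal segment (producing a grid crossing at $(c_B,y_O)$, which forces $B \in \mathcal{B}_\mathrm{over}$ and $B_\mathrm{row}\in \mathcal{B}_\mathrm{under}$ by the no-twisted-bend hypothesis and hence $h(B_\mathrm{col}) > h(B)$), or $c_B$ lies strictly outside that $x$-span (in which case the forced inequality between the $h$-values, combined with the group stratification, pins down which side $c_B$ lies on). By refining the $h$-assignment so that within each group the bends are ordered by the $x$-coordinate of their vertex, the first case gives $c_B < c_{B_\mathrm{col}} = x_O$ directly, while a short geometric argument handles the second. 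The $(z,x)$-analysis is symmetric, ordering instead by the $y$-coordinate. I expect this case analysis to be the hardest technical step, and some additional grid stabilization may be needed to eliminate borderline configurations.
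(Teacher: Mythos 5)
Your construction of the lift coincides with the paper's for its first half: both start from a twist-free grid diagram (Lemma~\ref{nobendtwistedlemma}), turn each bend into a $z$-cube bend, and stack so that $\mathcal{B}_{\mathrm{over}}$ sits above $\mathcal{B}_{\mathrm{under}}$, which settles the $(x,y)$-crossing condition exactly as you argue. The gap is in how you propose to handle the $(y,z)$- and $(z,x)$-crossing conditions, i.e.\ the crossings involving the $z$-parallel $X\to Y$ segments. You want to dispose of these by refining the single height function $h$, ordering the bends within each group by the $x$-coordinate of their vertex for the $(y,z)$-projection and ``symmetrically'' by the $y$-coordinate for the $(z,x)$-projection. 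But there is only one function $h$, and these two refinements are in general incompatible: a linear order on the bends of a group cannot simultaneously agree with the $x$-order and the $y$-order of their vertices unless those orders happen to coincide. Even for a single projection your case analysis is not closed: the crossing condition constrains $h(B)$ relative to $h(B_{\mathrm{col}})$ and $h(B_{\mathrm{row}})$, and these three bends need not all lie in the same group, so the within-group ordering gives no leverage; your deduction ``hence $h(B_{\mathrm{col}})>h(B)$'' only follows in the subcase you describe, and the ``short geometric argument'' for the second case is exactly the step that needs to be supplied. Your own hedge that ``some additional grid stabilization may be needed'' is pointing at the real difficulty.

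The paper resolves this differently and more robustly: it accepts that the naive stacking may violate the $(y,z)$- and $(z,x)$-crossing conditions, and then fixes each offending crossing one at a time by a local surgery that enlarges the cube number by two and reroutes the link near that crossing (Figure~\ref{crosssurgery}), in a way that corrects the crossing without creating new crossings in the other projections. This sidesteps any global consistency problem for the height assignment. To repair your argument you would either need to prove that a single $h$ satisfying all the order constraints from both projections exists (which would require showing the constraint system is acyclic, and is not obviously true), or adopt a local-modification step of the paper's kind.
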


\medskip

\begin{proof}
Let $G$ be a grid diagram for $L$ where no bend is twisted. Thus $\mathcal{B}$ can be partitioned into $\mathcal{B}_{\mathrm{over}},\mathcal{B}_{\mathrm{neutral}},$ and $\mathcal{B}_{\mathrm{under}}$. We embed the link into the cube such that the $(x,y)$-projection is exactly $G$. Transform each bend in $\mathcal{B}$ into a $z$-cube bend by placing the bend into a flat of the same dimensions as $G$. Then stack the flats containing the $z$-cube bends together so that the $z$-cube bends from $\mathcal{B}_{\mathrm{over}}$ have greater $z$-coordinate than $z$-cube bends from $\mathcal{B}_{\mathrm{under}}$. The $z$-cube bends from $\mathcal{B}_{\mathrm{neutral}}$ may be stacked anywhere. This stacking results in an embedding of the link in the cube with a  projection to the $(x,y)$-oriented grid diagram that agrees with the grid diagram $G$. %Thus $\Gamma$ is a cube diagram for $L$.

\medskip

By construction, all the crossings in the $(x,y)$-projection meet the necessary conditions to be a cube diagram. However, the embedding of $L$ may not satisfy the crossing conditions for the other two projections.  Suppose that in the $(y,z)$-projection the segment parallel to the $y$-axis has greater $x$-coordinate than the segment parallel to the $z$-axis, as depicted in the first picture in Figure~\ref{crosssurgery} (violating the crossing conditions in the definition of a cube diagram). Then for that crossing, alter the cube diagram by increasing the cube diagram number by two and changing the local picture around the crossing to the second picture   depicted in Figure~\ref{crosssurgery}. The new cube diagram satisfies the crossing conditions for this crossing and does not add any new crossings in the other projections.  Other crossing condition violations in the $(y,z)$-projection are dealt with similarly.
\begin{figure}[H]
\includegraphics[scale=.4]{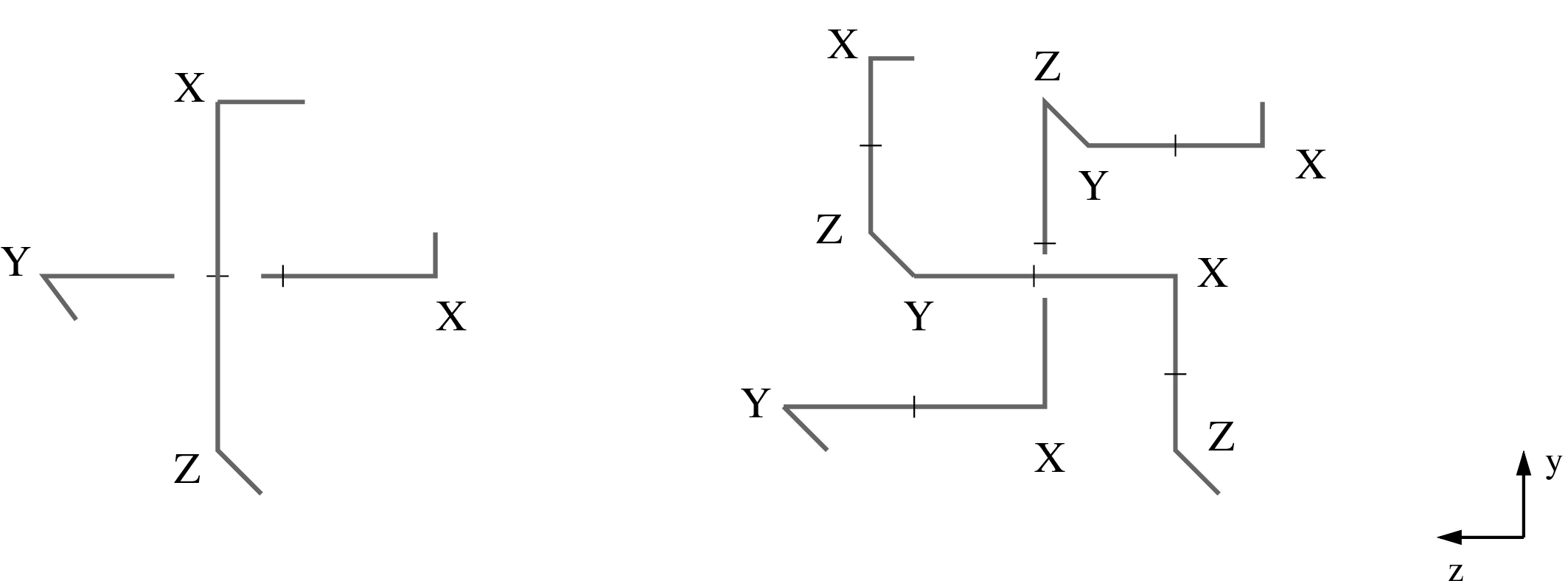}
\caption{Replace the crossing on the left with the new one on the right. The hashmarks along the link indicate line segments of length one.} \label{crosssurgery}
\end{figure}
Repeating this process for the $(z,x)$-projection produces a cube diagram $\Gamma$ for $L$.
\end{proof}

%----------------------------------------------------------------------------------------------%
%----------------------------------------------------------------------------------------------%

\medskip
\section{Invariant Moves}
\medskip
\label{invariantmoves}

First we describe invariant moves for grid diagrams, and then show corresponding moves for cube diagrams.

\medskip

\subsection{Invariant Grid Moves}
Cube diagrams represent oriented links. Clearly, each  link can have many different cube diagrams.  In this section we prove that any two grid diagrams corresponding to the same oriented link can be connected by a sequence of the following two elementary moves:

\smallskip

\begin{enumerate}
\item {\bf Stabilization.} A stabilization move adds an extra row and column to a grid diagram. Suppose $G$ is an $(x,y)$-oriented grid diagram with grid number $n$ and label the markings of $G$ by $\{X_j\}_{j=2}^{n+1}$ and $\{O_j\}_{j=2}^{n+1}$. Let $X_i$ and $O_i$ be two markings in a row $r$. Split $r$ into two new rows, with $X_i$ in one new row and $O_i$ in the other so that the $x$-coordinate of both markings remain the same as in G. Also add a new column to the diagram adjacent to the $X_i$ or the $O_i$  such that it is between the two markings, and then place two new markings $X_1$ and $O_1$ in the column so that $X_1$ occupies the same row as $O_i$ and $O_1$ occupies the same row as $X_i$. Destabilization is the inverse of this process.  See Figure \ref{stab}.
\begin{figure}[H]
\includegraphics[scale=.6]{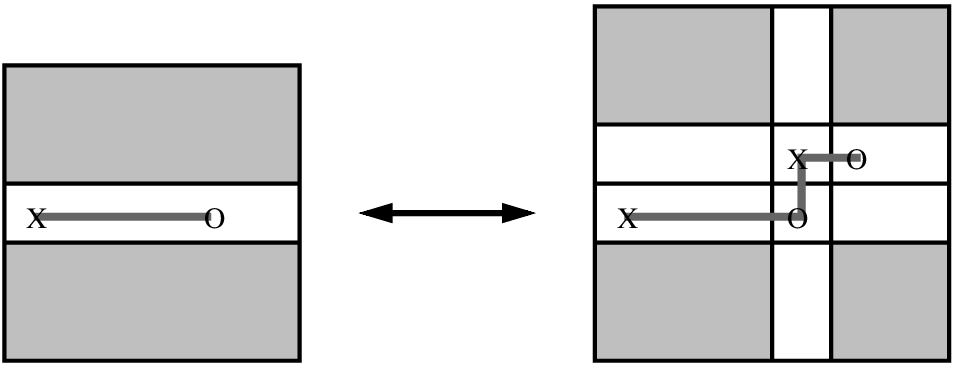}
\caption{{\bf Stabilization.} The segment on the left is replaced by the three segments on the right, which increases the size of the grid diagram by one.} \label{stab}
\end{figure}
\item {\bf Commutation.} Consider two adjacent rows in the grid diagram. In each row, project the line segment connecting the $X$ and $O$ to the $x$-axis. If the projections of the segments are disjoint, share exactly one point, or if the projection of one segment is entirely contained in the projection of the other, then the two rows can be interchanged. There is a similar move for columns.
\begin{figure}[H]
\includegraphics[scale=.6]{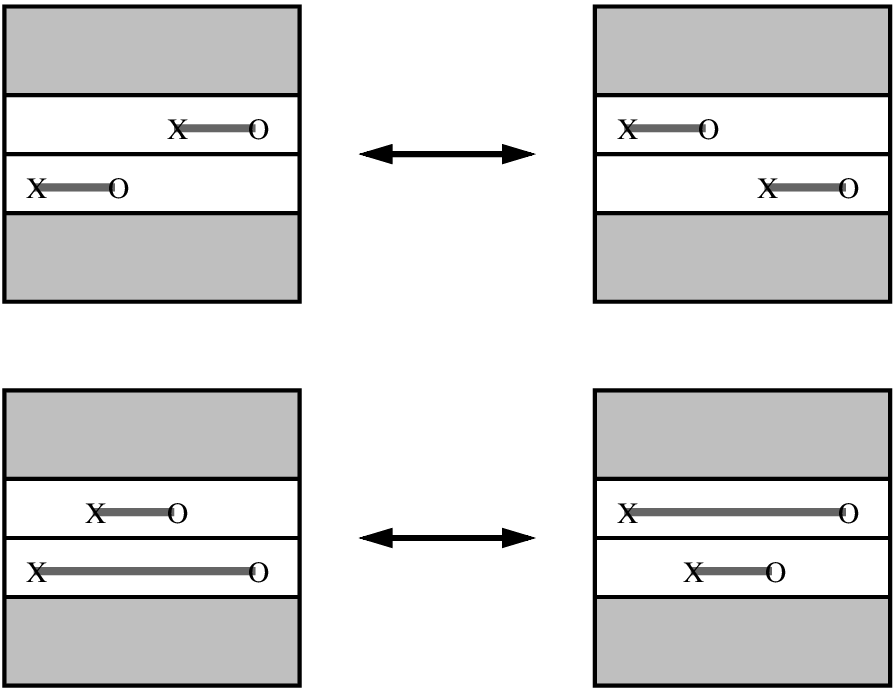}
\caption{{\bf Commutation.} Adjacent rows (or columns) can be interchanged when the markings are situated as above.} \label{commutation}
\end{figure}
\end{enumerate}

\medskip

In \cite{Cromwell}, Cromwell presents a similar set of moves for grid diagrams, except the second commutation move in Figure~\ref{commutation} is replaced with the following move instead:

\smallskip

\begin{itemize}
\item {\bf Cyclic Permutation.} The rows or columns of a grid diagram can be cyclically permuted without changing the link type.

\medskip

\begin{figure}[H]
\includegraphics[scale=.6]{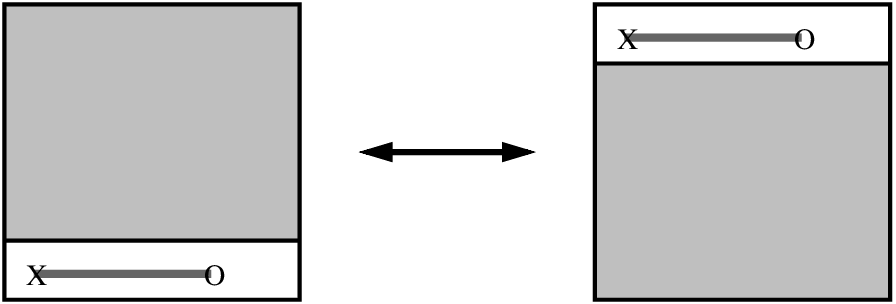}
\caption{{\bf Cyclic Permutation.} The diagram on the left is transformed into the diagram on the right by moving the bottom row to the top of the diagram.} \label{cyclicperm}
\end{figure}
\end{itemize}

\medskip

The second commutation move is equivalent to the cyclic permutation move.

\begin{proposition}
Let $G$ and $G^\prime$ be two grid diagrams representing the same link. Then $G$ can be transformed into $G^\prime$ via a sequence of stabilization moves and commutation moves. \label{gridmovesprop}
\end{proposition}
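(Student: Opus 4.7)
The plan is to deduce this proposition from Cromwell's theorem for arc presentations (equivalently, grid diagrams), which is the version of this statement proven in \cite{Cromwell}. Cromwell's theorem asserts that any two grid diagrams representing the same oriented link are related by a finite sequence of stabilizations, destabilizations, commutations of the first type shown in Figure~\ref{commutation}, and cyclic permutations of rows and columns. Since our proposition replaces cyclic permutation with the second, more general commutation move depicted in Figure~\ref{commutation} (the case where the projection of one segment to the axis is contained in the projection of the other), it suffices to show that cyclic permutation can be realized by a finite sequence of stabilizations, destabilizations, and the two types of commutation moves.

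To make this reduction, I would first cite Cromwell's theorem verbatim to get a sequence of moves between $G$ and $G'$ that involves stabilizations, destabilizations, type-one commutations, and cyclic permutations. The remaining task is to rewrite every cyclic permutation as a composition of our allowed moves. Without loss of generality, consider a cyclic permutation that moves the bottom row of an $(x,y)$-oriented grid diagram to the top. The idea is to stabilize adjacent to the two markings of that bottom row so as to introduce an auxiliary row that is ``wide'' in the sense that its $x$-axis projection contains the $x$-axis projection of every other row of the diagram. Once such a wide auxiliary row is in place, the type-two commutation move applies between the auxiliary row and each of the rows above it in turn, allowing the original bottom row (now attached to the auxiliary markings) to be slid vertically past all intermediate rows. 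After the sliding is complete one destabilizes, removing the auxiliary row and leaving the original row re-attached at the top of the grid. An entirely analogous procedure handles cyclic permutation of columns, and iterating gives the general case of the cyclic permutation move.

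I would carry out the argument in this order: first carefully describe the stabilization that introduces the wide auxiliary row; then verify, one adjacent pair at a time, that the hypothesis of the type-two commutation move (projections of segments in the two rows are nested, share at most one endpoint, or are disjoint) is automatically satisfied because the auxiliary row's projection contains everything; then describe the destabilization that removes the auxiliary row and complete the move. Combining this with Cromwell's theorem then gives the proposition.

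The main obstacle is bookkeeping rather than conceptual: one must check that the stabilization which produces the wide auxiliary row really can be performed with the correct orientation conventions and that the nested-projection hypothesis for the type-two commutation is maintained at each step as the target row is pushed through the diagram. A subsidiary point that deserves care is the case where the target row's $X$ and $O$ markings lie in columns that are not adjacent to the boundary, which is where the wide auxiliary row is genuinely needed rather than the ordinary type-one commutation. Once these local pictures are worked out carefully, everything else is formal.
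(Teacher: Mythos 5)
Your proposal is correct and matches the paper's own argument: the paper likewise reduces to showing that Cromwell's cyclic permutation move can be realized by stabilizing at the bottom row, commuting the new vertical segments to the outermost columns to create a full-width row, sliding that row to the top via the nested-projection (type-two) commutation, and then destabilizing (see Figure~\ref{gridseq}). The only cosmetic difference is that the paper carries out the details entirely through a sequence of pictures rather than in prose.
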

\begin{proof}
It is enough to show that a cyclic permutation move can be accomplished through a sequence of stabilization and commutation moves. Figure \ref{gridseq} shows how to accomplish such a cyclic permutation move.
\begin{figure}[H]
\includegraphics[scale=.2]{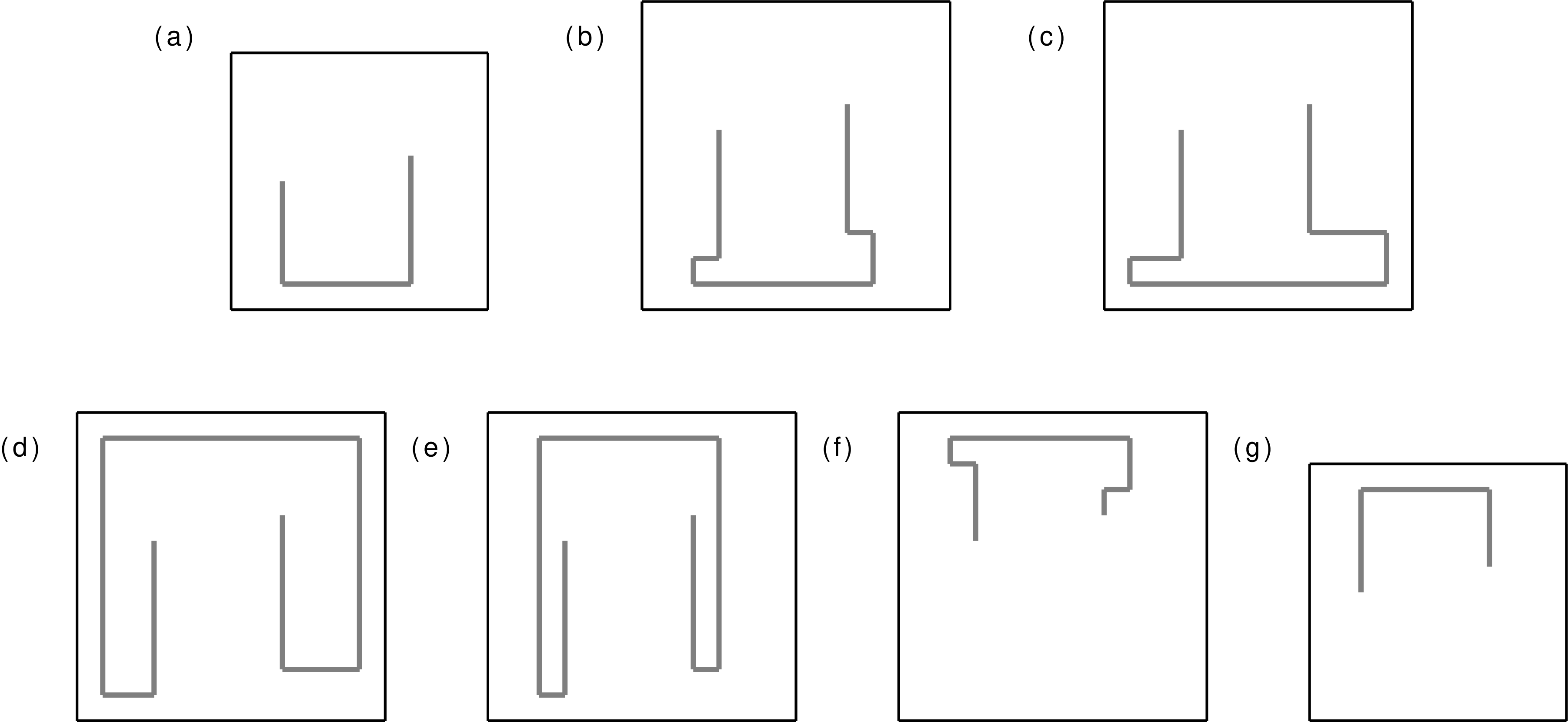}
\caption{Diagram (b) is obtained by stabilizing diagram (a) twice. Diagram (c) is obtained by commuting the vertical segments created in the stabilizations until they are on the leftmost and rightmost column. Diagram (d) is obtained by applying commutation moves repeatedly until the bottommost segment becomes the topmost segment. Diagram (e) is obtained by commuting the leftmost and rightmost columns until they are in the desired place. Diagram (f) is obtained by moving the two short horizontal lines on the bottom until they are all the way to the top. Finally, diagram (g) is obtained by destabilizing twice.} \label{gridseq}
\end{figure}
\end{proof}

\medskip

\subsection{Invariant Cube Moves}
There are corresponding stabilization and commutation moves for a cube diagram. To state these moves, we develop some terminology. Let $x_1$ and $x_2$ be two lines segments in $\mathbb{R}^3$ that are parallel to the $x$-axis. If the endpoints of the projections of $x_1$ and $x_2$ to the $x$-axis are distinct and alternate between the segments, then $x_1$ and $x_2$ are said to {\it interleave}. There is an analogous version of interleaving for pairs of segments parallel to the $y$-axis or $z$-axis. Let $F_1$ and $F_2$ be two $x$-flats in a cube diagram representing some link $L$. Suppose $y_i$ is the segment of $L$ in $F_i$ that is parallel to the $y$-axis and $z_i$ is the segment of $L$ in $F_i$ that is parallel to the $z$-axis, for $i=1$ and $2$. Then $F_1$ and $F_2$ are said to {\it interleave} if either $y_1$ and $y_2$ interleave or $z_1$ and $z_2$ interleave.

\medskip

The following moves do not change the isotopy type of the link embedded in the cube diagram.

\medskip

\begin{enumerate}
\item {\bf Cube Stabilization.} A cube stabilization move increases the cube number of a diagram by one. Suppose $\Gamma$ is a cube diagram with grid number $n$ and label the markings of $\Gamma$ by $\{X_j\}_{j=2}^{n+1},$ $\{Y_j\}_{j=2}^{n+1}$ and $\{Z_j\}_{j=2}^{n+1}$. Let $Z_{i-1},X_i,Y_i$ and $Z_i$ be consecutive vertices in the link described by $\Gamma$.   To stabilize at a marking $X_i$, insert a new $x$-flat into $\Gamma$ adjacent to the $x$-flat containing $X_i$ so that the new flat is between the $x$-flats containing $X_i$ and $Z_i$. Insert a new $y$-flat into $\Gamma$ adjacent to the $y$-flat containing $Y_i$ such that the new $y$-flat is not between the $y$-flat containing $Z_{i-1}$ and the $y$-flat containing $Y_i$. Move $Y_i$ and $Z_i$ into the new $y$-flat by increasing or decreasing its $y$-coordinate by one. Insert a new $z$-flat between the $z$-flats containing $X_i$ and $Y_i$ such that the new flat is adjacent to $X_i$. There is a unique way to place markings $X_1$, $Y_1$ and $Z_1$ into the new $z$-flat to get a new cube diagram $\Gamma'$.   The operation of going between $\Gamma$ to $\Gamma'$ is called a {\em cube stabilization move} (the inverse of the construction above is often called destabilization).  To define stabilizations at $Y_i$ (respectively $Z_i$), follow the same procedure above after cyclically permuting the coordinates and markings once (respectively twice).  See Figure~\ref{cubestab}.

\begin{figure}[H]
\includegraphics[scale=.3]{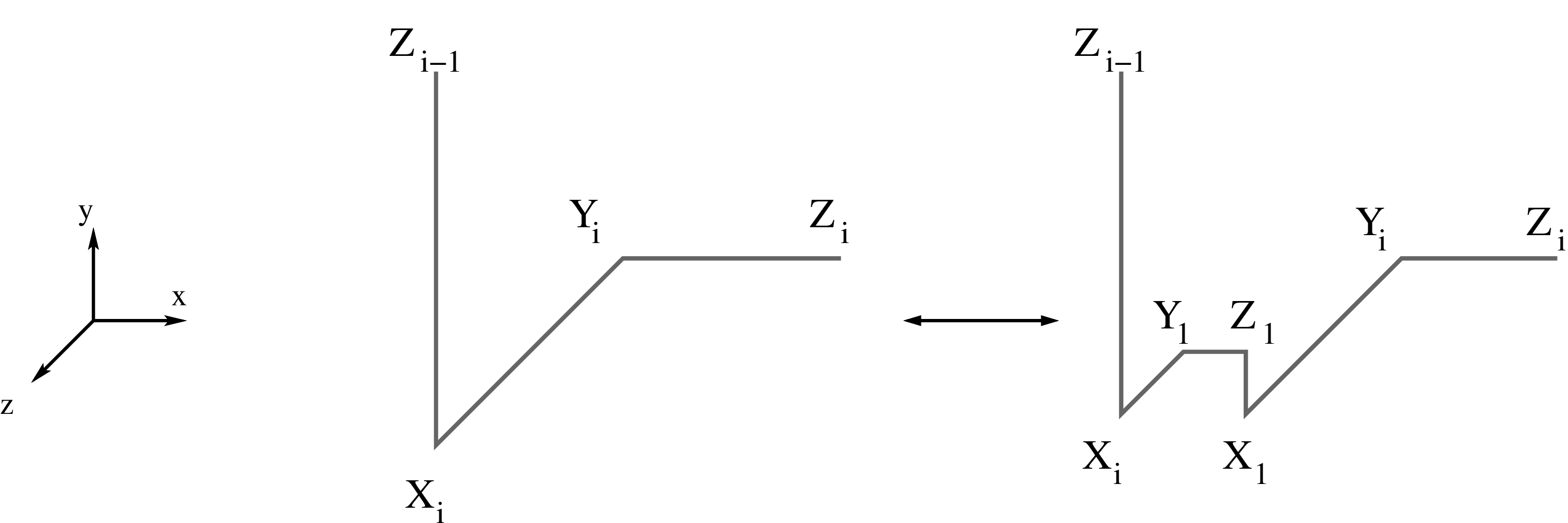}
\caption{{\bf Cube Stabilization.} An example of a stabilization at $X_i$.} \label{cubestab}
\end{figure}

\medskip

\item {\bf Cube Commutation.} Let $F_1$ and $F_2$ be two adjacent flats. Suppose $F_1$ and $F_2$ are not interleaved. Moreover, suppose that interchanging $F_1$ with $F_2$ results in a cube diagram (i.e. the crossings in each projection satisfy the crossing conditions). Then interchanging the flats $F_1$ and $F_2$ is a {\em cube commutation move}.  Up to rotations of the cube diagram that cyclically permute the coordinates, all cube commutation moves can be generated by the four commutation moves listed in Figure~ \ref{cubecom} together with cube stabilizations moves.

\begin{figure}[H]
\includegraphics[scale=.25]{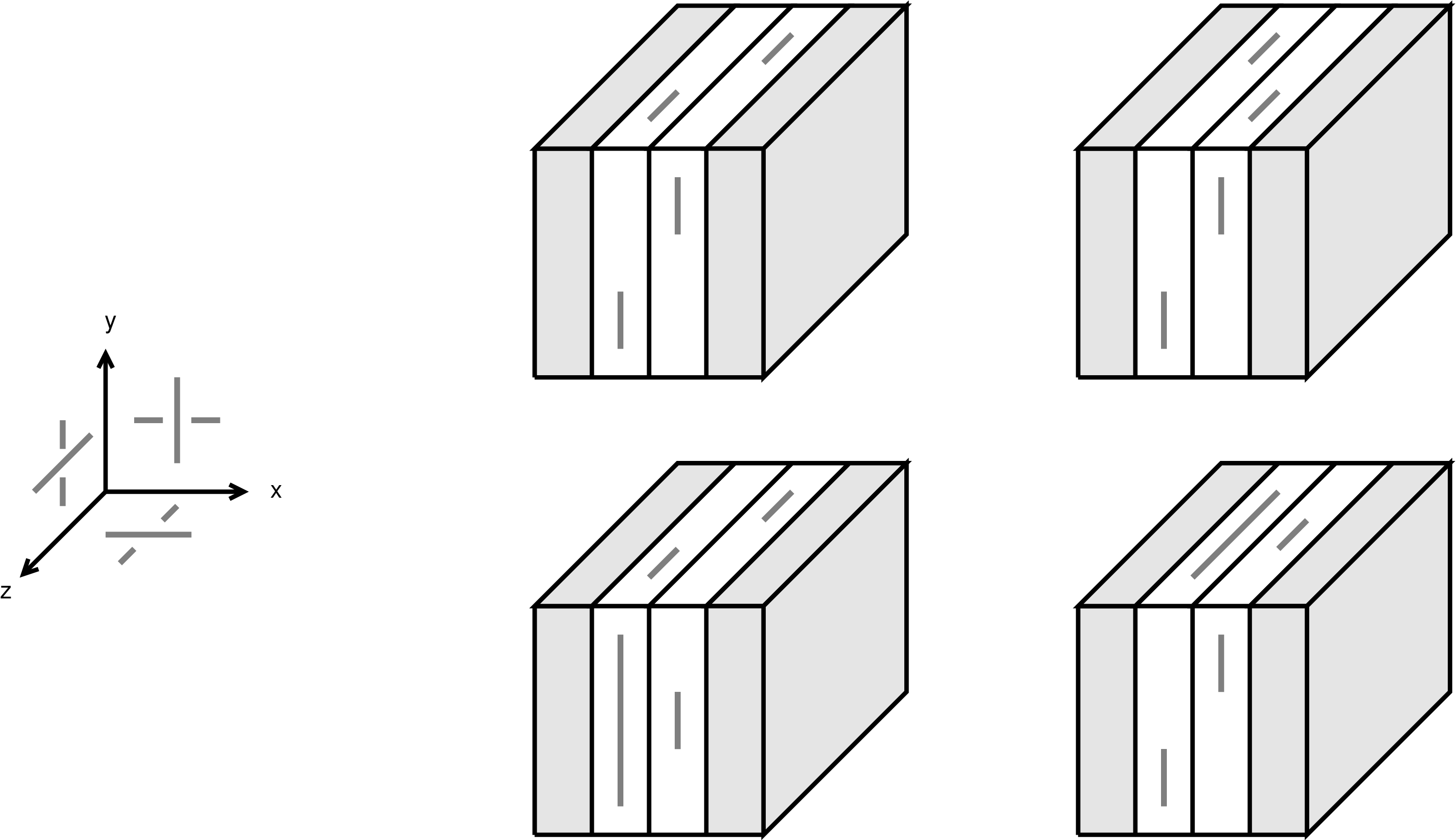}
\caption{{\bf Cube Commutation.} In each of the four schematics above, an interchange of the $x$-flats with indicated grid projections (or vice versa)  is a cube commutation.   The schematics are exactly the same for $y$-flats and $z$-flats.  These four cube commutation moves together with the cube stabilization move generate all cube commutation moves.} \label{cubecom}
\end{figure}
\end{enumerate}

\medskip

Notice that a crossing  can be introduced in a projection by following a cube stabilization with cube commutation(s).  In this way it is possible to introduce a Reidemeister~I move in one or more of the three grid projections (see Figure~\ref{cube-stab-move}).

\medskip

\mbox{} \begin{figure}[H]
\includegraphics[scale=.25]{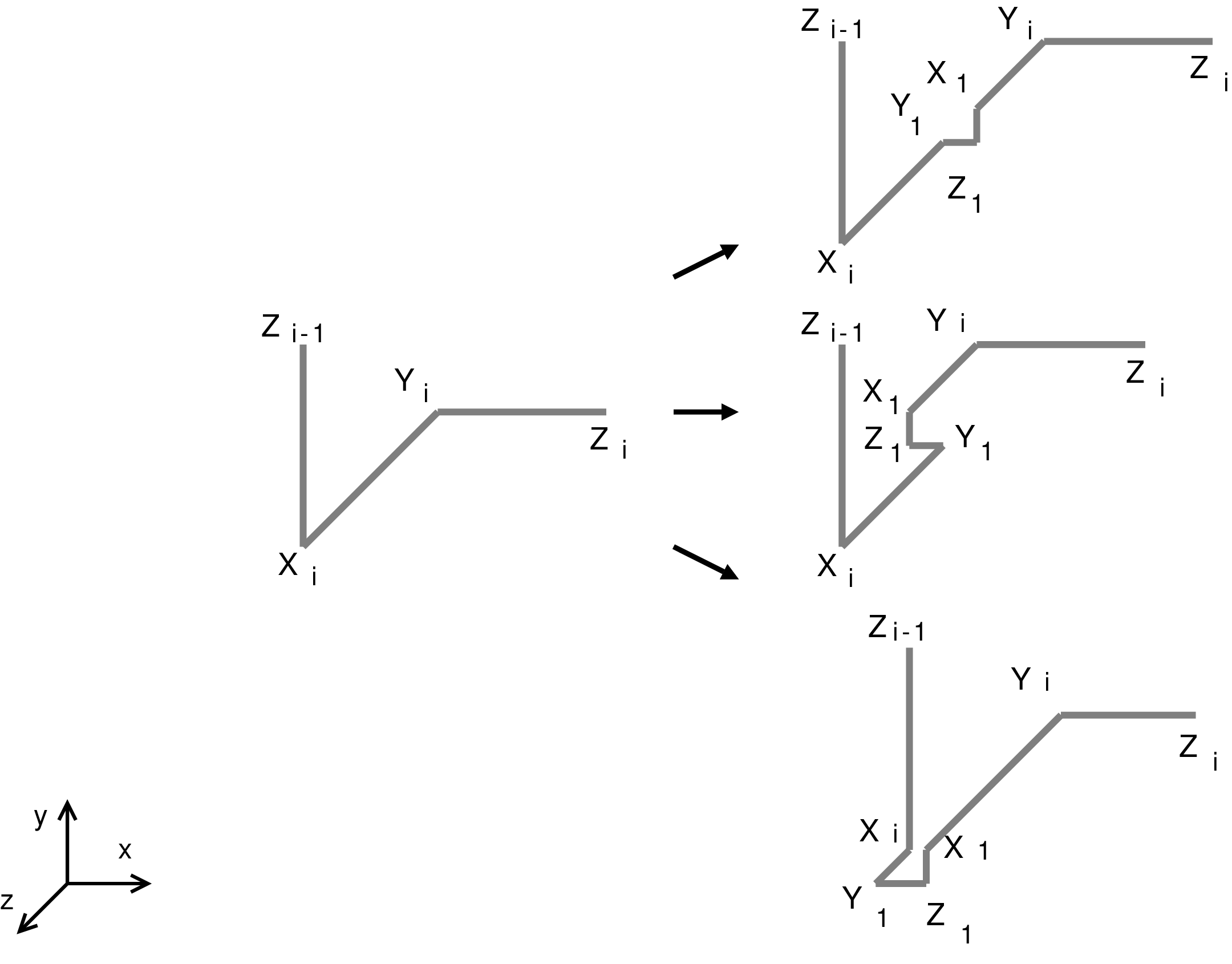}
\caption{A  grid stabilization move in one of the projections of a cube can be realized by the projection of a cube stabilization.  Reidemeister~I moves in any of the projections can be achieved by a cube stabilization followed by cube commutation move(s).} \label{cube-stab-move}
\end{figure}

\medskip

The projection of a cube stabilization move to any face is a stabilization move on that grid diagram.  The relationship between the cube and grid versions of the commutation move is slightly more nuanced.  Let $G$ be the grid diagram that is the $(z,x)$-projection of the cube diagram $\Gamma$. Suppose there is a commutation move on $G$ that corresponds to interchanging two $x$-flats, $F_1$ and $F_2$. Interchanging $F_1$ and $F_2$ may fail to be a cube move in two ways (see Figure~\ref{badcom2} for examples):

\begin{enumerate}
\item Interchanging $F_1$ and $F_2$ may break the crossing conditions.\\

\item Interchanging $F_1$ and $F_2$ may not be an isotopy of the link at all.\\
\end{enumerate}

\medskip

\begin{figure}[H]
\includegraphics[scale=.35]{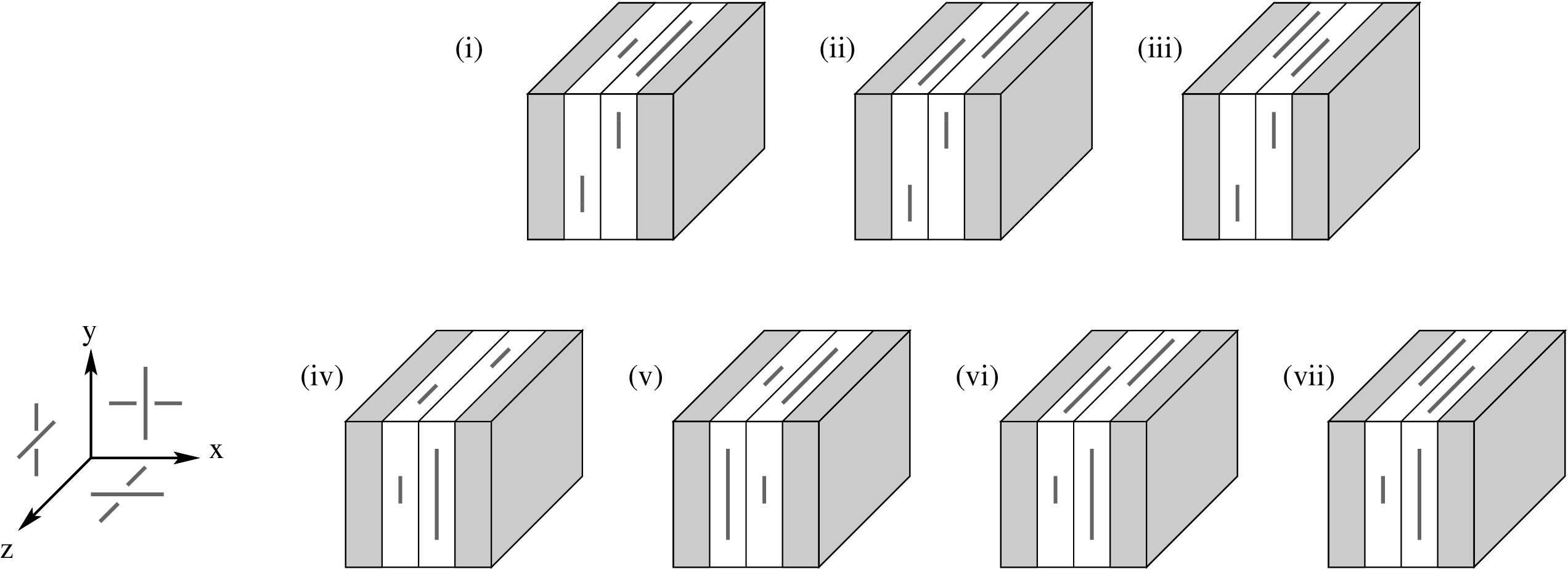}
\caption{These diagrams illustrate cases where exchanging the indicated adjacent $x$-flats fail to be a cube commutation move.} \label{badcom2}
\end{figure}

\medskip

\subsection{Equivalent Cube Diagrams}
Our goal is to show that if two cube diagrams represent the same oriented link, then one can be transformed into the other via a sequence of cube stabilization and cube commutation moves. 

Let $L$ be a piecewise linear link, and let $r$ be a rectangle such that the intersection of $L$ and $r$ (including its interior) is a single piecewise linear arc $a$ that is either $1$, $2$, or $3$ sides of $r$. Additionally, assume that $r$ is parallel to one of the three coordinate planes. A {\em rectangle move on $L$ through $r$} is performed by removing the piecewise linear arc $a$ from $L$ and replacing it with the complementary sides of $r$. If we say that $\widehat{L}$ is obtained from $L$ via a rectangle move through $r$, then it assumed that $r$ and $L$ meet the conditions necessary to perform a rectangle move. A rectangle move where the piecewise linear arc $a$ consists of $k$ line segments for $k=1,2$, or $3$ is called a $(k,4-k)$-rectangle move. Note that if $L$ is represented by some cube diagram, then the result of performing a rectangle move on $L$ is not necessarily represented by a cube diagram.

The first goal of this section is to show that one can approximate a rectangle move in the setting of cube diagrams by using cube moves. In order to formalize what we mean by approximate, we introduce the notion of strong equivalence. Two links diagrams in the plane are {\em strongly equivalent} if one can be transformed into the other without using any Reidemeister moves (i.e., only planar isotopies). 

The main tool used in proving that any two cube diagrams representing isotopic links can be connected with a sequence of cube moves is a lemma that says if a cube diagram $\Gamma$ describes a link $L$ and $L$ is related to a link $\widehat{L}$ via a rectangle move, then there exists a cube diagram $\Gamma^\prime$ representing a link $L^\prime$ such that the corresponding 3 coordinate plane link projections of $\widehat{L}$ and $L^\prime$ are strongly equivalent. 

In Lemma \ref{lemma:Reidemeister}, we describe several local moves on cube diagrams that will be useful in proving our main lemma. 

\begin{lemma}
\label{lemma:Reidemeister}
Let $\Gamma$ be a cube diagram representing the link $L$.  Let $\widehat{L}$ be the piecewise linear link obtained by interchanging two adjacent flats of $\Gamma$.  Suppose that $L$ and $\widehat{L}$ are identical in one link projection and strongly equivalent in another. Suppose that in the remaining link projection, $L$ and $\widehat{L}$ are related by either a Reidemeister II or Reidemeister III move. Then there exists a cube diagram $\Gamma^\prime$ representing the link $L^\prime$ such that $\Gamma^\prime$ can be obtained from $\Gamma$ via a sequence of cube moves.  Furthermore, all three of the corresponding coordinate plane link projections of $\widehat{L}$ and $L^\prime$ are strongly equivalent.
\end{lemma}

\begin{proof}
Let $\widehat{\Gamma} = (C, \{\mathcal{X},\mathcal{Y},\mathcal{Z}\}, \widehat{L})$ be a data structure obtained by interchanging the two flats of $\Gamma$.   If $\widehat{\Gamma}$ is a cube diagram, then the result is obvious and one can take $\Gamma^\prime = \widehat{\Gamma}$. Otherwise $\widehat{\Gamma}$ satisfies the marking conditions but fails the crossing conditions in one coordinate plane link projection.   In that projection, a Reidemeister II or III move is performed. Without loss of generality, suppose that the $(y,z)$-projection is the projection where $\widehat{\Gamma}$ does not satisfy the crossing conditions. 

\medskip

Assume that the $(y, z)$-projections of $\Gamma$ and $\widehat{\Gamma}$ are related by a Reidemeister II move.  Then the $(y, z)$-projection of $\widehat{\Gamma}$ contains either  $1$ or $2$ crossings that do not satisfy the crossing conditions (cf. Figure \ref{lemmatrick1pic1}).
\begin{figure}[h]
\includegraphics[scale=.6]{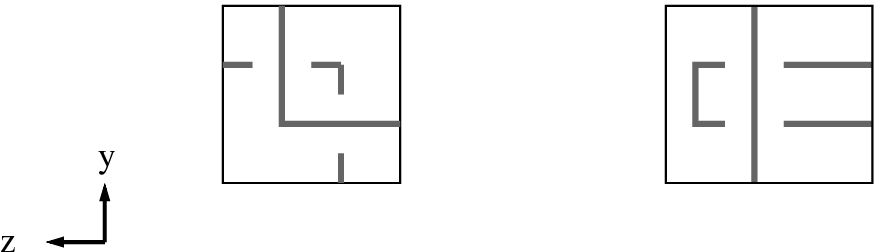}
\caption{A local neighborhood of a projection of $\widehat{\Gamma}$. The diagram on the left has one crossing that does not satisfy the crossing conditions and the diagram on the right has two.}\label{lemmatrick1pic1}
\end{figure}

Figures \ref{lemmatrick1pic2} and \ref{lemmatrick1pic3} depict a sequence of local diagrams starting with the $(y,z)$-projection of $\Gamma$ and ending in two diagrams that are strongly equivalent to the two diagrams of Figure \ref{lemmatrick1pic1} respectively.  Each of the grid diagrams in the sequence is the projection of a cube diagram and adjacent cube diagrams in the sequence are related by a sequence of cube moves.  The $(x, y)$-projection and $(z, x)$-projection of each cube diagram depicted in Figures \ref{lemmatrick1pic2} and \ref{lemmatrick1pic3} are strongly equivalent to the corresponding projection of $\widehat{\Gamma}$.
\begin{figure}[h]
\includegraphics[scale=.4]{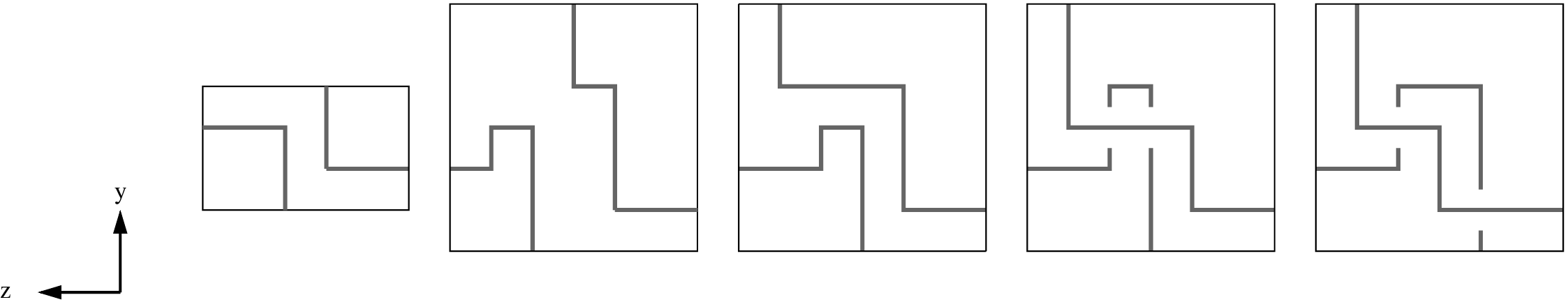}
\caption{The $(y,z)$-projections of the sequence of cube diagrams from $\Gamma$ to $\Gamma^\prime$ for the first diagram of Figure \ref{lemmatrick1pic1}.}
\label{lemmatrick1pic2} 
\end{figure}
\begin{figure}[h]
\includegraphics[scale=.4]{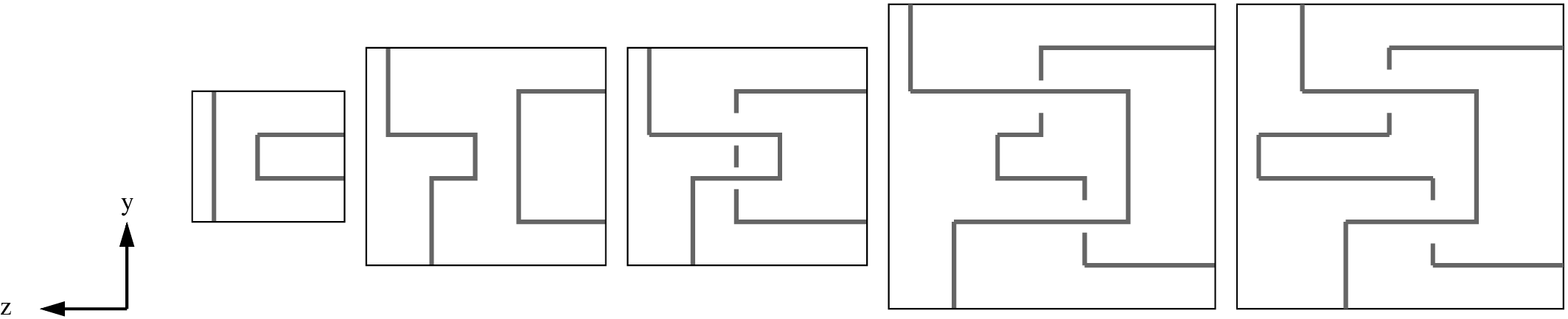}
\caption{The $(y, z)$-projections of the sequence of cube diagrams from $\Gamma$ to $\Gamma^\prime$ for the second diagram of Figure \ref{lemmatrick1pic1}.}
\label{lemmatrick1pic3} 
\end{figure}

Next, assume that the $(y, z)$-projections of $\Gamma$ and $\widehat{\Gamma}$ are related by a Reidemeister III move. Interchanging the two specified flats may break the crossing conditions in the $(y,z)$-projection, as depicted in Figure \ref{badRIII}. However, possibly after performing several cube stabilizations, one can achieve the Reidemeister III move by interchanging two different flats, which is also depicted in Figure \ref{badRIII}.
\begin{figure}[H]
\includegraphics[scale=.25]{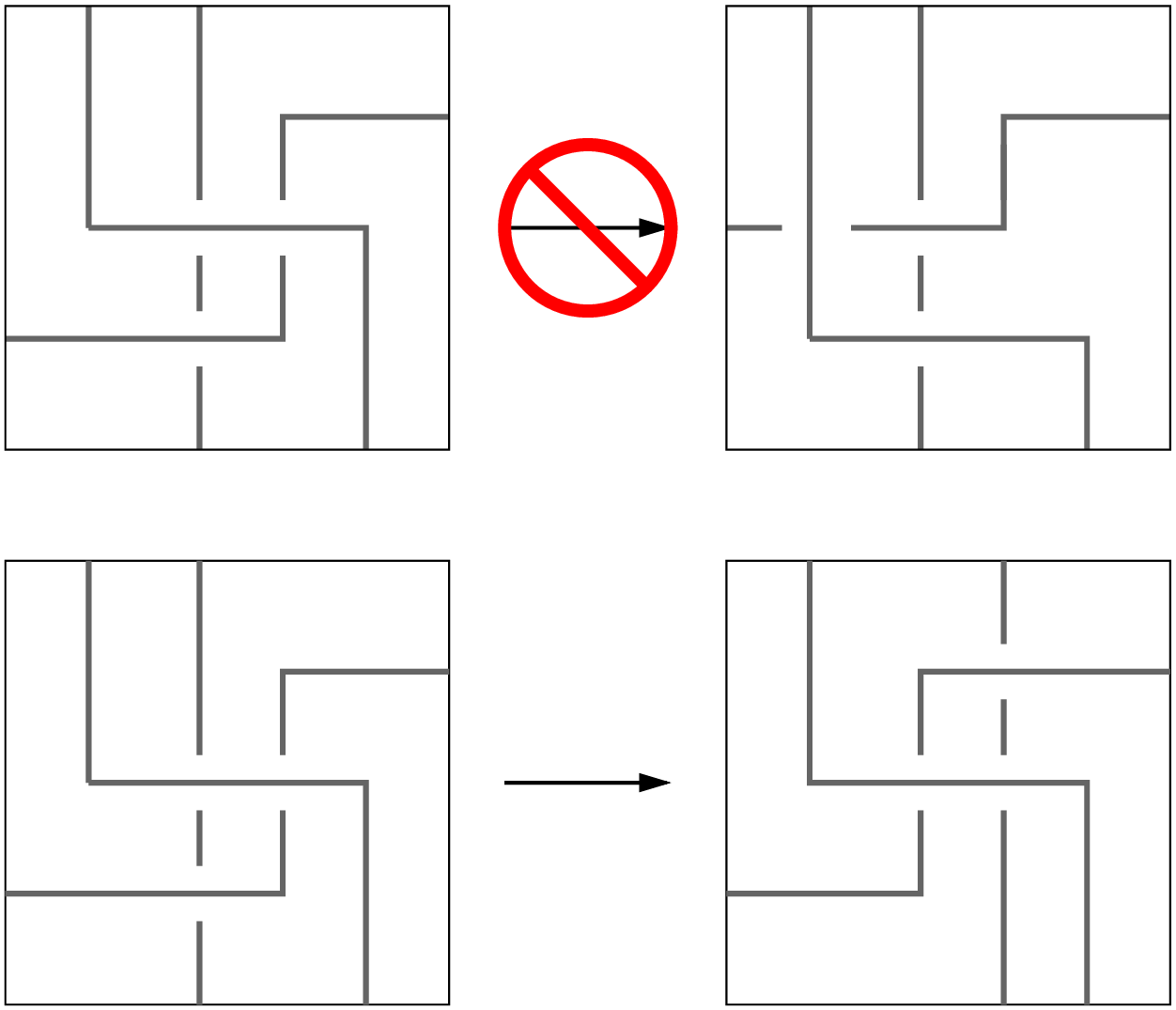}
\caption{Interchanging two adjacent flats may cause the crossing conditions to fail (top diagram). However, two other flats can be interchanged which results in a Reidemeister III move in the $(y,z)$-projection (bottom diagram).}
\label{badRIII}
\end{figure}

Figure \ref{lemmatrick2pic} shows the different ways a Reidemeister III move can be performed in the $(y,z)$-projection. The grid commutation moves of Figure \ref{lemmatrick2pic} may always be lifted to cube commutation moves since if there were any obstruction, one could perform a cube stabilization first, and then move the obstructing piece out of the local picture. These cube commutations preserve strong equivalence of diagrams in all 3 coordinate plane link projections.
\begin{figure}[h]
\includegraphics[scale=.25]{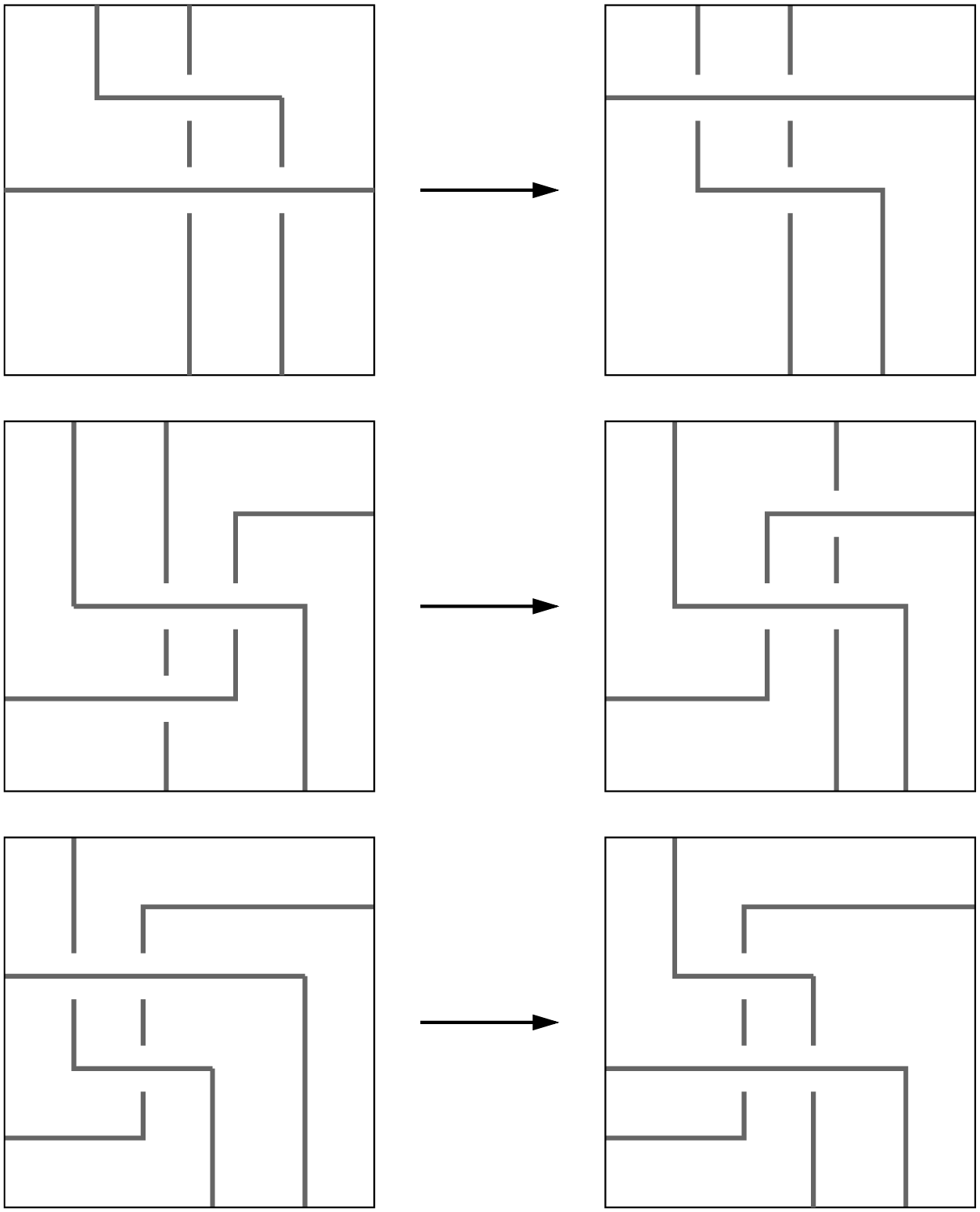}
\caption{Reidemeister III moves in the projection $P$.}
\label{lemmatrick2pic}
\end{figure}

The resulting cube diagram $\Gamma^\prime$ represents a link $L^\prime$ that is strongly equivalent to $\widehat{L}$ in each projection.

\end{proof}

Lemma \ref{lemma:Reidemeister} shows how to approximate isotopies induced by interchanging adjacent flats where the crossing conditions fail in one projection only. However, interchanging two adjacent flats may cause the crossing conditions to fail in more than one projection. If this is the case, then a cube stabilization (as in Figure \ref{separatefig}) will break the isotopy into two flat interchanges, where the crossing conditions in each interchange only fails in one projection.
\begin{figure}[H]
\includegraphics[scale=.3]{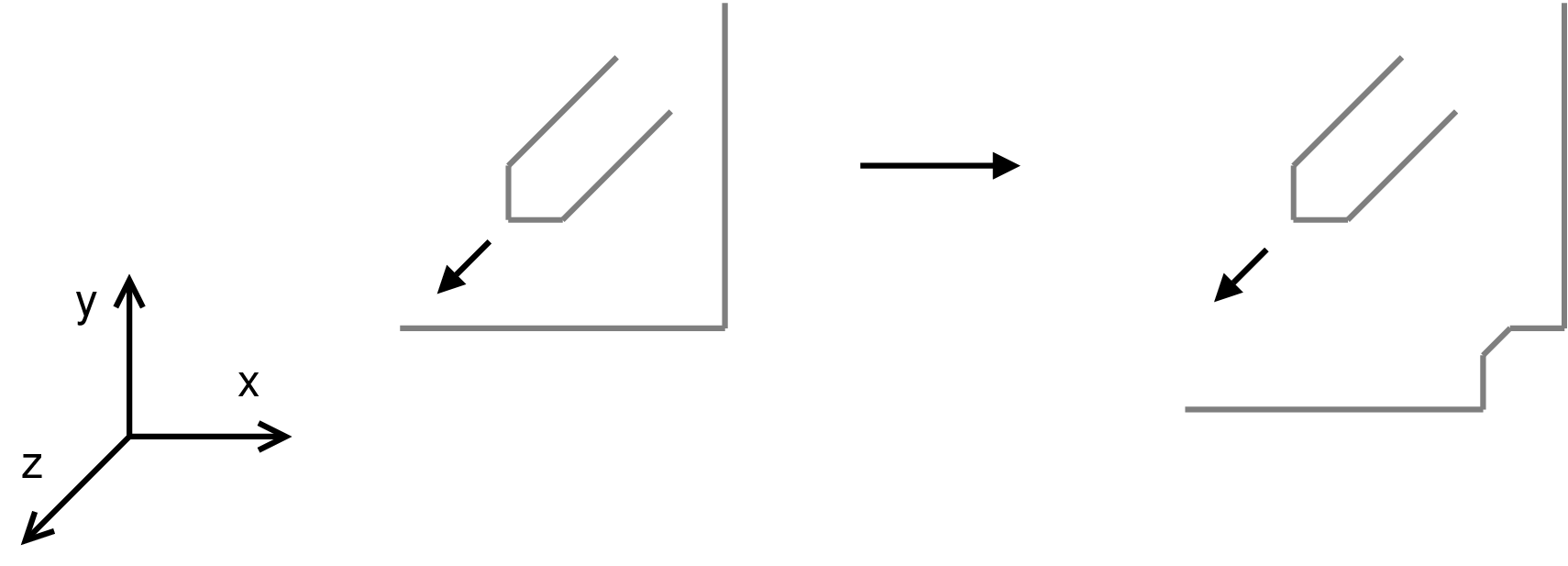}
\caption{Stabilize in order to isolate the failure of crossing conditions to one projection.} \label{separatefig}
\end{figure}

The following lemma is the main tool used to prove that cube diagrams representing isotopic links are related by a sequence of cube moves.

\begin{lemma}[Isotopy Lemma]
Let $\Gamma$ be a cube diagram representing the link $L$, and let $\widehat{L}$ be obtained via a rectangle move on $L$ through $r$. Then there exists a cube diagram $\Gamma^\prime$ representing a link $L^\prime$ such that $\widehat{L}$ and $L^\prime$ are strongly equivalent in each of their corresponding coordinate plane link projections.  Furthermore,  $\Gamma$ can be transformed into $\Gamma^\prime$ via a sequence of cube moves.
\end{lemma}

The proof is essentially given through a sequence of pictures. Let $m$ be a segment of $L$ that is one side of the rectangle $r$. Figure \ref{biglemmafigure} shows an example of one such $\Gamma$ where $m$ is parallel to the $y$-axis and $r$ is parallel to the $yz$-plane. 
\begin{figure}[H]
\includegraphics[scale=.2]{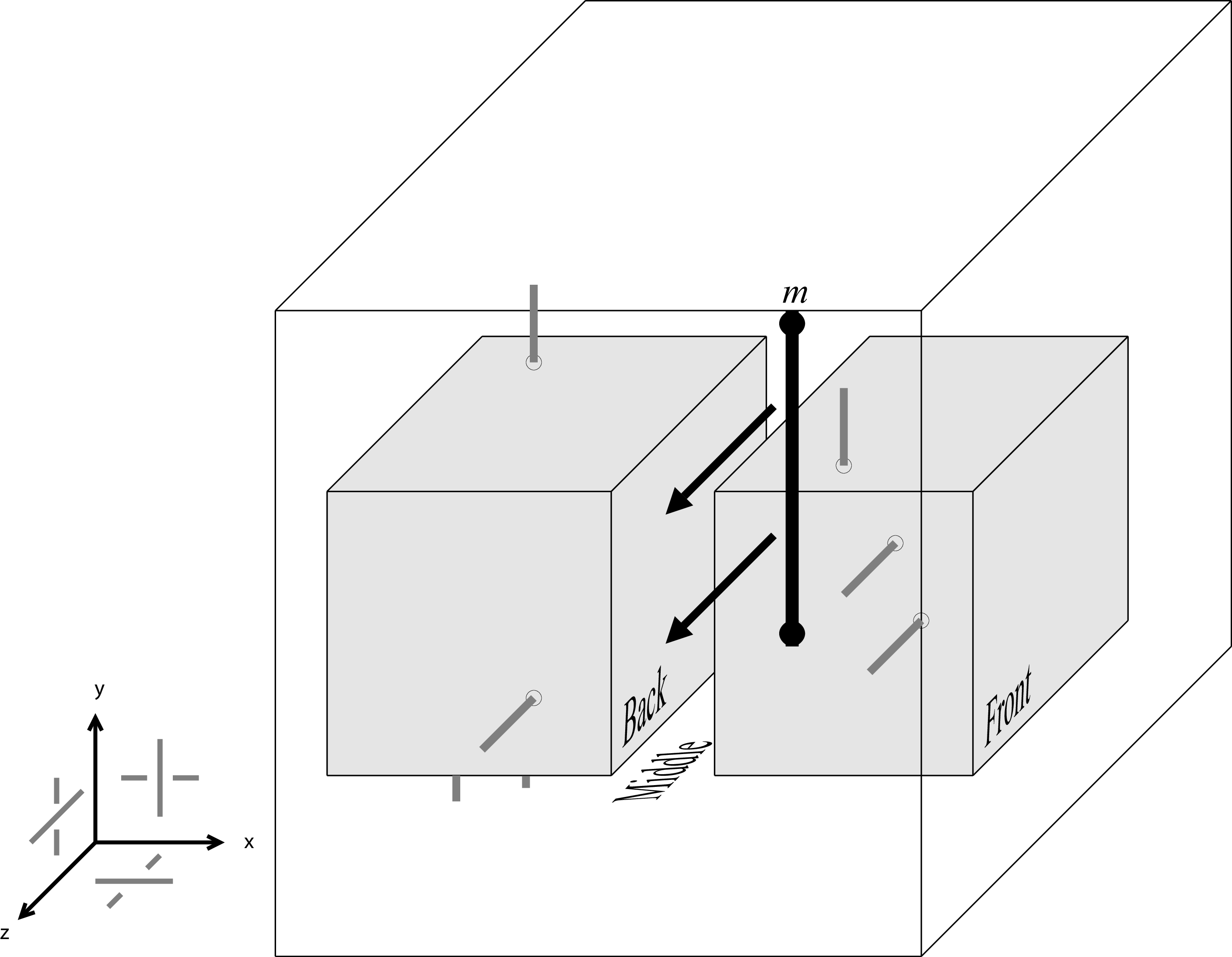}
\caption{An illustration of an isotopy.} \label{biglemmafigure}
\end{figure}

\medskip

The proof splits into two steps.  First, we stabilize $m$ and push a small segment under along the bottom of the rectangle $r$. The second step is to show that it is possible to lift the upper half part of the segment created in the previous step so that it approximates the upper part of the rectangle $r$. This process repeatedly uses the local moves established in Lemma \ref{lemma:Reidemeister}.

\begin{proof}[Proof of the Isotopy Lemma] Since $\Gamma$ is a cube diagram, the rectangle move through $r$ is either a $(1,3)$-rectangle move or a $(2,2)$-rectangle move. Assume that the rectangle move is a $(1,3)$-rectangle move. In the case of a $(2,2)$-rectangle move, one begins with a diagram similar to Figure \ref{isounderfig}, and so the proof for this case is a subproof of the case we present.

Let $m$ be a segment of $L$ that is one side of the rectangle $r$. Assume that $m$ is parallel to the $y$-axis and that the rectangle $r$ is parallel to the $yz$-plane. The argument for this case is easily adapted to all other cases. Also, assume that $m$ is a segment between an $X$-marking and a $Z$-marking, which we call $X_0$ and $Z_0$, and suppose that the $y$-coordinate of $Z_0$ is less than the $y$-coordinate of $X_0$. 

Let $r_{yz}$ be the projection of the rectangle $r$ to the $yz$-plane. In the following argument, various cube stabilizations will be performed to $\Gamma$. Suppose that at each step of the sequence transforming $\Gamma$ to $\Gamma^\prime$ the vertices of $r_{yz}$ have coordinates $(y_m,z_m), (y_m,z_M), (y_M,z_m),$ and $(y_M,z_M)$ for some $y_m<y_M$ and some $z_m<z_M$. If a cube stabilization is performed at some marking whose $y$-coordinate is between $y_m$ and $y_M$, then
the new $y$-coordinates of the vertices of $r_{yz}$ are $y_m$ and $y_M+1$, and if a cube stabilization is performed at some marking whose $z$-coordinate is between $z_m$ and $Z_M$, then the new $z$-coordinates of the vertices of $r_{yz}$ are $z_m$ and $Z_M+1$. Similarly compress $r_{yz}$ if a cube destabilization is performed. In both cases, we will continue to call the new rectangle $r_{yz}$.

One may perform two cube stabilizations along $m$ to create a width 1 by width 1 $z$-cube bend $b_z$. The segment of that $z$-cube bend that is parallel to the $y$-axis is between an $X$-marking and a $Z$-marking, which we call $X_1$ and $Z_1$. The two cube stabilizations also created another new $X$-marking and another new $Z$-marking, which we call $X_2$ and $Z_2$. Figure \ref{firststab} shows an example of the $(x, y)$-projection and $(z, x)$-projection of the cube diagram after the two stabilizations. 
\begin{figure}[H]
\includegraphics[scale=.25]{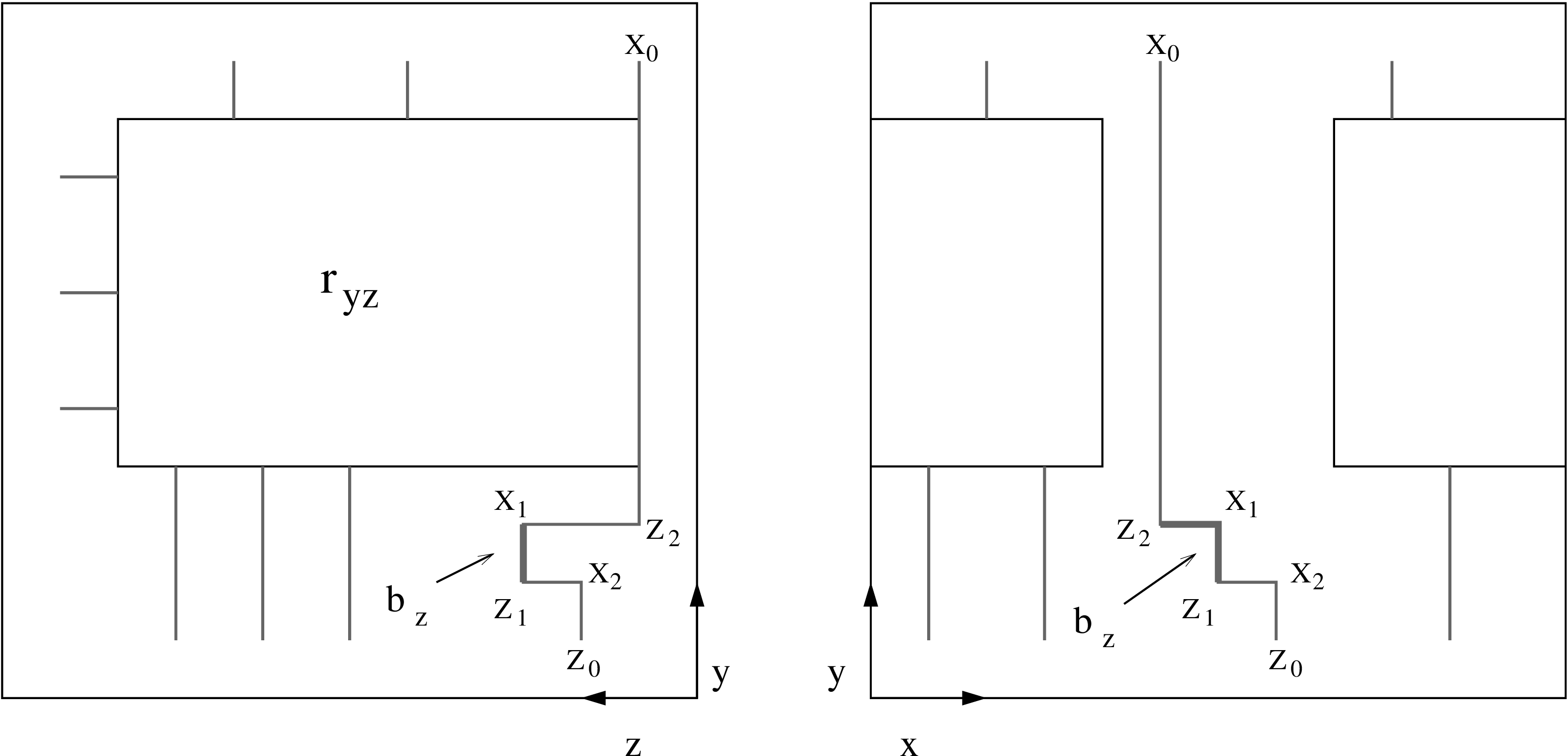}
\caption{Stabilize along the segment $m$ twice to create a $1\times 1$ $z$-cube bend.}
\label{firststab}
\end{figure}

By repeated applications of Lemma \ref{lemma:Reidemeister}, the $z$-flat containing the $z$-cube bend $b_z$ can be commuted until the $z$-coordinate of $X_1$ and $Z_1$ is greater than the largest $z$-coordinate of $r_{yz}$. Note that the crossing conditions in the $(z, x)$-projection must also be satisfied, but this is achieved by applications of Lemma \ref{lemma:Reidemeister} possibly together with cube stabilizations as in Figure \ref{separatefig}. Figure \ref{isounderfig} shows an example of the $(x, y)$-projection and the $(y, z)$-projection of the cube diagram after the commutations.

\begin{figure}[H]
\includegraphics[scale=.2]{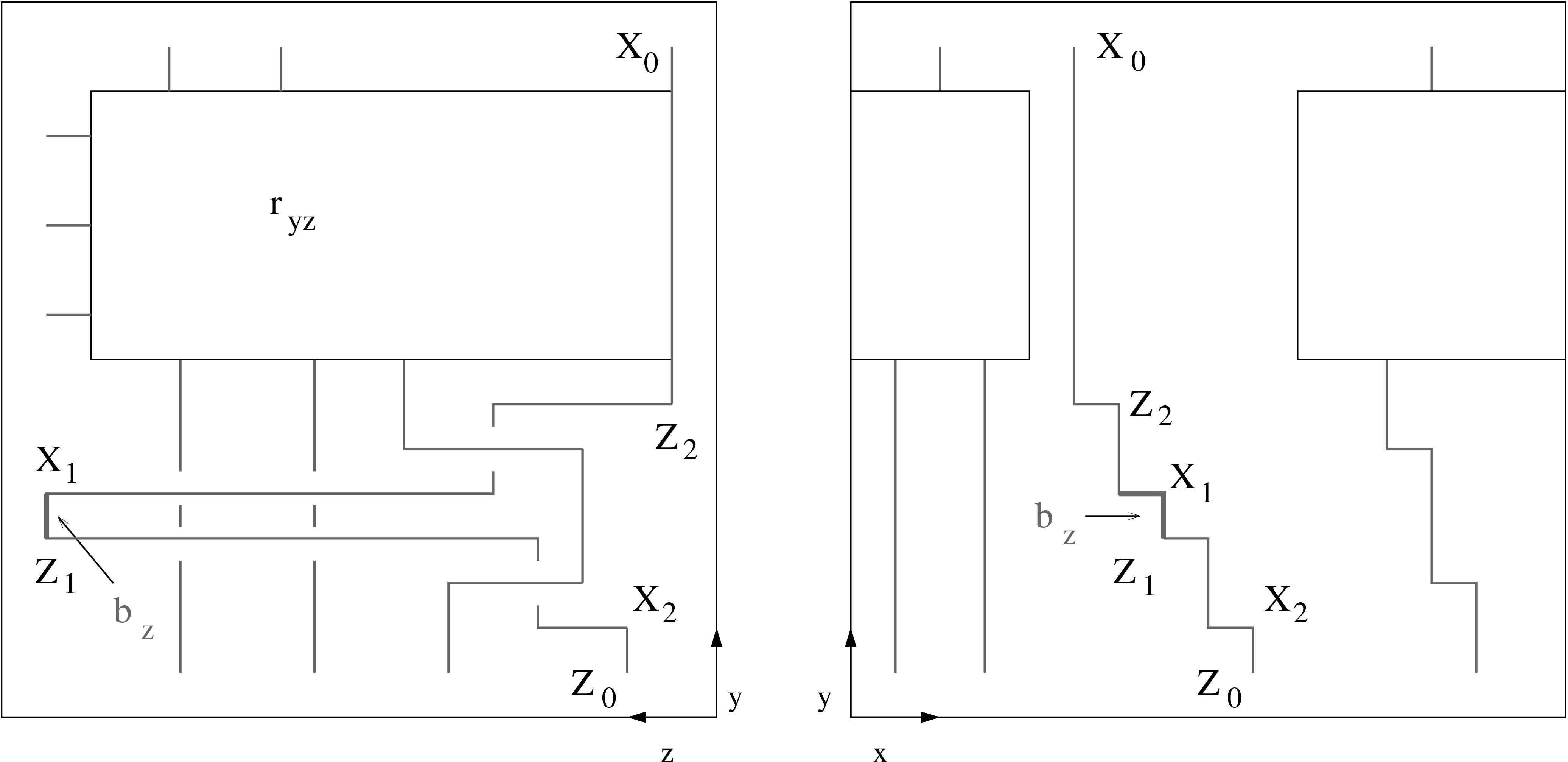}
\caption{Commute the $z$-bend $b_z$ along the $z$-axis.} \label{isounderfig}
\end{figure}

Stabilize the cube diagram twice between $X_1$ and $Z_1$ to form a new $1\times 1$ $y$-cube bend $b_y$.  The segment of $b_y$ which is parallel to the $z$-axis is between an $X$ marking and a $Y$ marking, which we shall name $X_3$ and $Y_3$. The $y$-flat containing $b_y$ can be repeatedly commuted (using Lemma \ref{lemma:Reidemeister}) until the common $y$-coordinate of $X_3$ and $Y_3$ is greater than the greatest $y$-coordinate of $r_{yz}$. Figure \ref{isoalongfig} gives an example of how such a sequence of commutations can be performed. The $(x, y)$-projection of the cube diagram at this stage of the process is strongly equivalent to the $(x, y)$-projection of the original cube diagram $\Gamma$.

\begin{figure}[H]
\includegraphics[scale=.17]{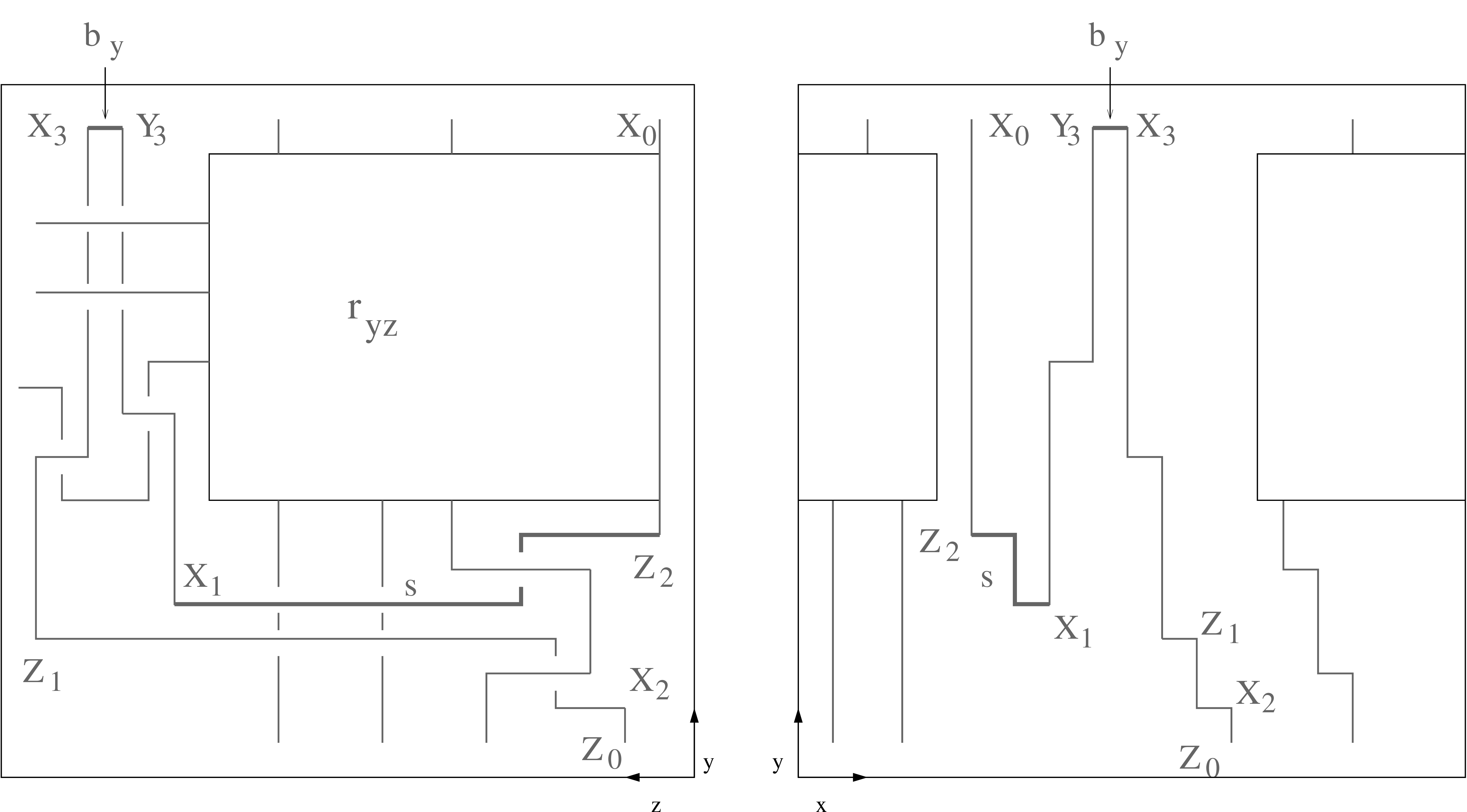}
\caption{Commute the $y$-flat containing $b_y$ along the $y$-axis.} \label{isoalongfig}
\end{figure}

Let $s$ be the collection of line segments from $X_1$ to $Z_2$. The collection of segments $s$ appear in bold in Figure \ref{isoalongfig}. It remains to show that one can pass the segments making up $s$ upward in the $y$-direction so that the $y$-coordinate of all the segments of $s$ is greater than the greatest $y$-coordinate of $r_{yz}$. Figure \ref{endresult} shows the end result of this isotopy.
\begin{figure}[H]
\includegraphics[scale=.15]{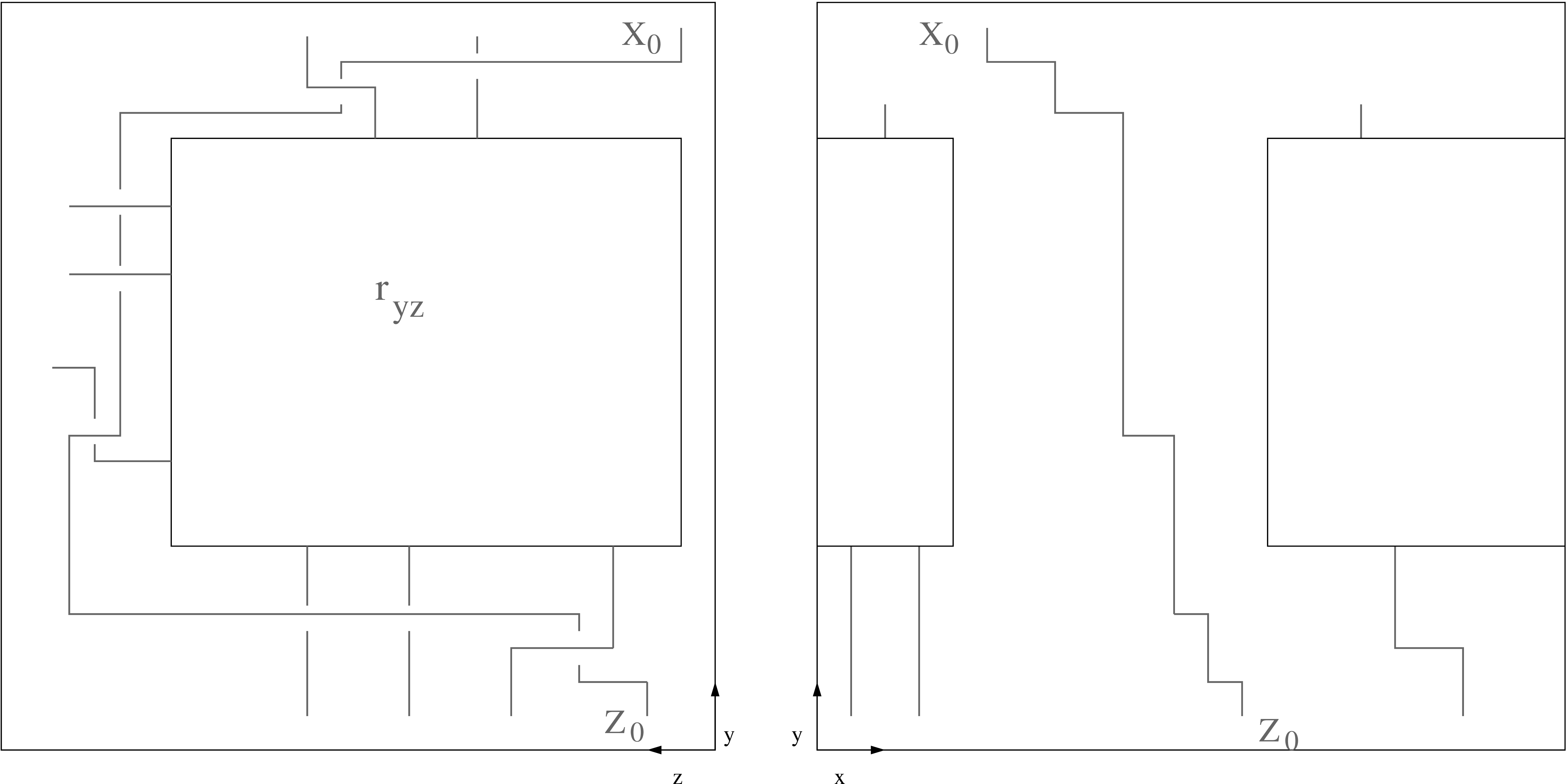}
\caption{The final result of the Isotopy Lemma.}
\label{endresult}
\end{figure}

We perform the isotopy of $s$ in two steps. First, we handle crossings involving the segment $s$ in the $(z,x)$-projection. As the $z$-bend $b_z$ was commuted in the direction of the $z$-axis, it is possible that the link was stabilized in order to ensure that crossing conditions are satisfied in the $(z,x)$-projection. An example of such a crossing is shown in Figure \ref{(z,x)-proj-cross-cond}. Fix a crossing in the $(z,x)$-projection where the arc $s$ passes under another arc of the diagram, and let $t$ be the other segment involved in the crossing. Since $t$ was created as in Lemma \ref{lemma:Reidemeister}, it is only $2$ units long.
\begin{figure}[H]
\includegraphics[scale=.3]{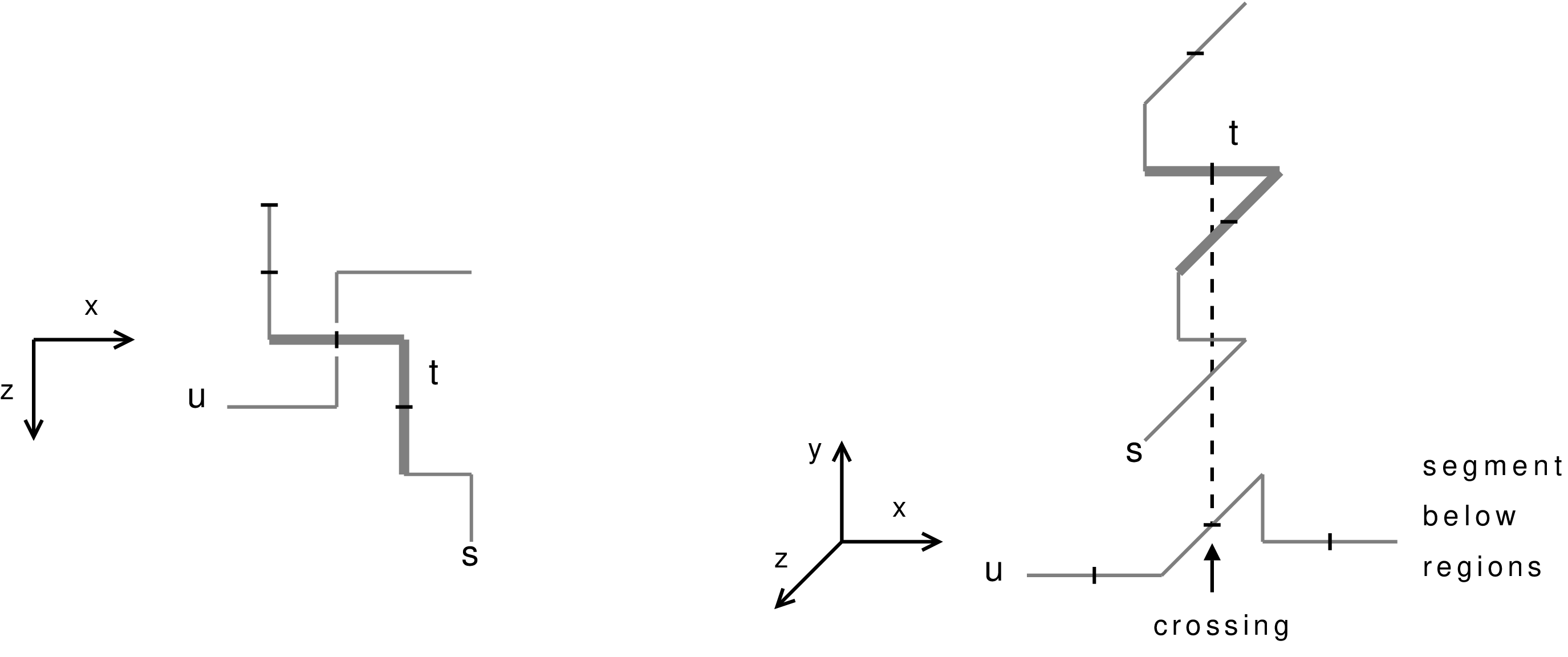}
\caption{Satisfying the crossing conditions in the $(z,x)$-projection.} \label{(z,x)-proj-cross-cond}
\end{figure}

Commute $t$ parallel to the $y$-axis until its $y$ coordinate is greater than the greatest $y$-coordinate of $r_{yz}$. This may require several applications of Lemma \ref{lemma:Reidemeister}. An example is given in Figure \ref{iso-t-fig}. 
\begin{figure}[H]
\includegraphics[scale=.2]{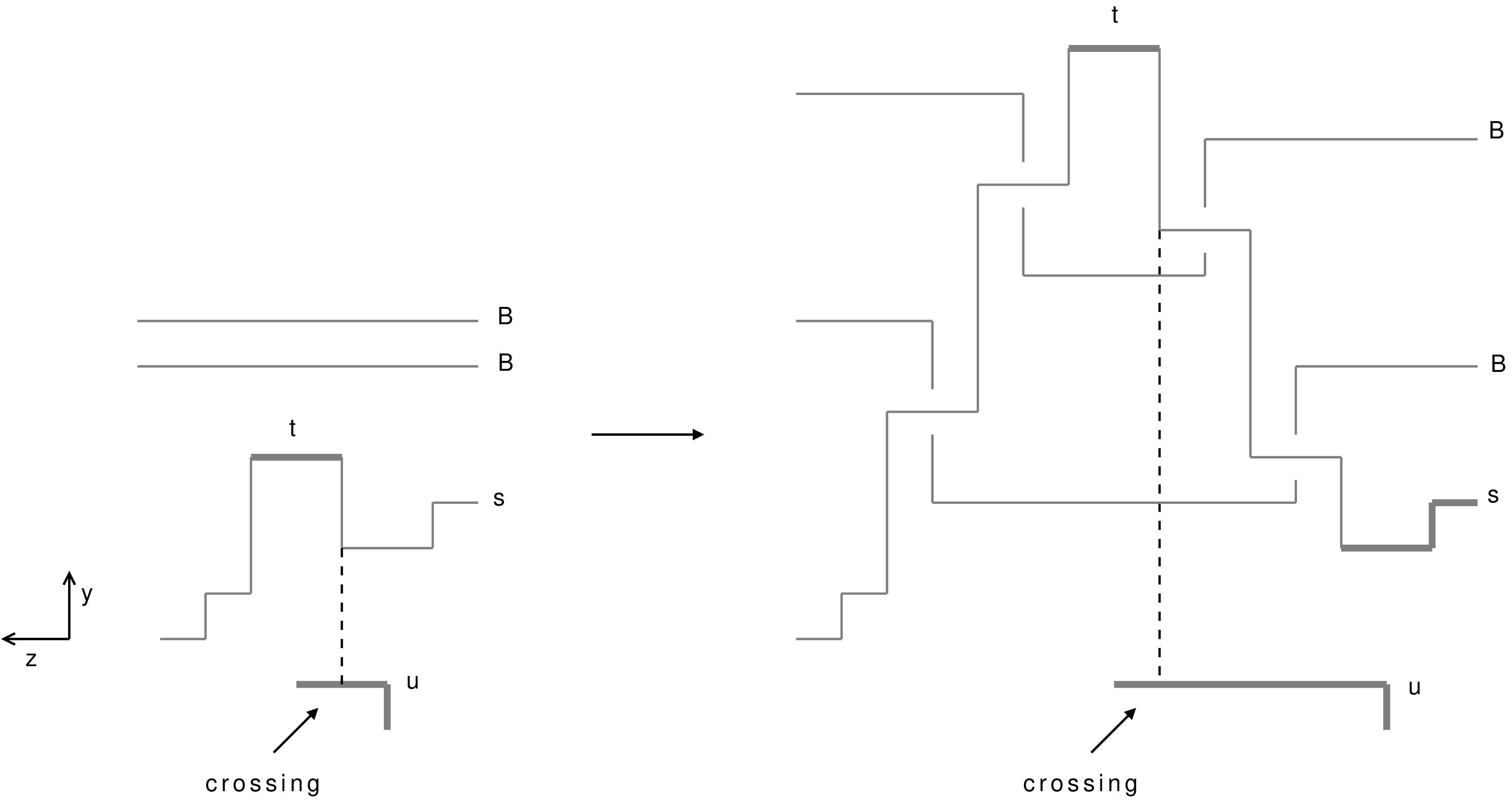}
\caption{Before and after pictures of commuting $t$ to the top of the $r_{yz}$.} \label{iso-t-fig}
\end{figure}

Repeat this process for each crossing in the $(z,x)$-projection where a segment of $s$ passes under another segment in the link. Label the other segments $t_1,\dots t_l$. The $(y,z)$-projection of the cube diagram now appears as in Figure \ref{iso-t-done-fig}.

\begin{figure}[H]
\includegraphics[scale=.15]{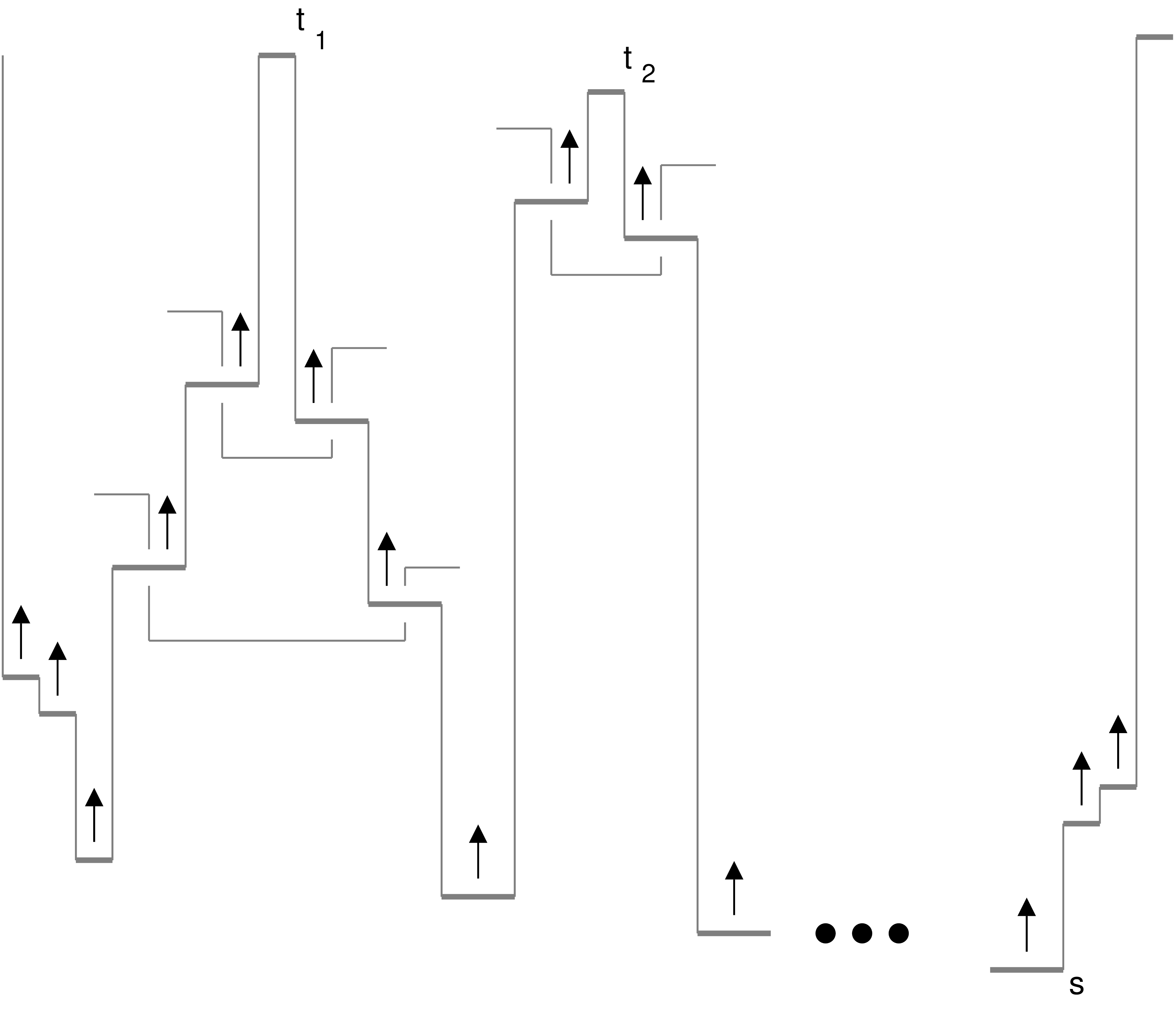}
\caption{The result of commuting each $t_i$ above $r_{yz}$. } \label{iso-t-done-fig}
\end{figure}

The final step is to commute the remaining segments of $s$ until their $y$-coordinates are greater than the greatest $y$-coordinate of $r_{yz}$. One can commute $y$-flats containing segments of $s$ without breaking the crossing conditions since the segments of $s$ which are part of crossings in the $(z,x)$-projection (i.e. the $t_i$'s) already have a larger $y$-coordinate than the greatest $y$-coordinate of $r_{yz}$. When commuting $y$-flats containing segments of $s$, no crossings are created or destroyed in the $(x,y)$-projection. Thus the remaining segments of $s$ can be commuted above $r_{yz}$ using a sequence of Reidemeister II and III moves in the $(y,z)$-projection. By Lemma \ref{lemma:Reidemeister}, there is a sequence of cube moves that commutes the segments of $s$ so that each one has a greater $y$-coordinate than the greatest $y$-coordinate of $r_{yz}$. 

The resulting cube diagram is $\Gamma^\prime$, and it represents a link $L^\prime$ that is strongly equivalent to $\widehat{L}$ in all $3$ coordinate plane link projections.
\end{proof}

A consequence of the Isotopy Lemma is the main  theorem of the paper: if two cube diagrams represent the same oriented link, then one can be transformed into the other through cube moves.  The strategy of the proof is to find a sequence of grid diagrams that takes the $(x,y)$-projection of the first cube diagram to the $(x,y)$-projection of the second and then modifying the sequence until the corresponding moves on the grid can be carried out in the cube.

\medskip

\begin{theorem}\label{Moves_Theorem}
Two cube diagrams correspond to ambient isotopic oriented links if and only if one can be obtained from the other by a finite sequence of cube stabilization and cube commutation moves.
\end{theorem}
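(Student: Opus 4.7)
The forward direction follows by construction. Each cube stabilization/destabilization is a local insertion of a short bend that is clearly ambient isotopic to a straight arc, and each cube commutation interchanges two flats that are not interleaved in either transverse coordinate, so the swap can be realized by an isotopy that does not drag any strand across another. Hence any finite sequence of cube moves produces an ambient isotopy of oriented links.

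For the reverse direction, let $\Gamma_1$ and $\Gamma_2$ be cube diagrams of the same oriented link $L$, with $(x,y)$-projections $G_1$ and $G_2$. By Proposition~\ref{gridmovesprop} there is a finite sequence of grid stabilizations/destabilizations and grid commutations carrying $G_1$ to $G_2$. The plan is to lift this sequence to a sequence of cube moves on $\Gamma_1$, so that the running $(x,y)$-projection always equals the current grid in the sequence. A grid stabilization lifts directly: in the appropriate position apply the cube stabilization whose $(x,y)$-projection realizes the desired grid move, as pictured in Figure~\ref{cube-stab-move}.

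The main obstacle is lifting a grid commutation that is not itself a legal cube commutation. Figure~\ref{badcom2} identifies two obstructions to swapping the two relevant flats $F_1$ and $F_2$: the swap may violate the crossing condition of the $(y,z)$- or $(z,x)$-projection, or $F_1$ and $F_2$ may interleave in a transverse direction so that the swap is not an isotopy at all. Both obstructions are removed by the Isotopy Lemma: before performing the swap, slide the offending $y$- or $z$-parallel segments along the transverse coordinate axis (via the sequences of cube stabilizations/destabilizations and commutations furnished by the lemma) until $F_1$ and $F_2$ are no longer interleaved and the crossing data after the swap can be made consistent. The grid commutation then lifts to a genuine cube commutation. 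This lifting step is where essentially all of the work resides, and the reason the Isotopy Lemma was established first.

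Carrying out the whole lifted sequence yields a cube diagram $\Gamma_1'$ with the same $(x,y)$-projection as $\Gamma_2$. By Proposition~\ref{projection}, any two of the three oriented grid projections determine the cube diagram, so it remains to arrange that the $(y,z)$-projections of $\Gamma_1'$ and $\Gamma_2$ coincide. I rerun the argument in the $(y,z)$-plane, applying Proposition~\ref{gridmovesprop} to $\pi_{yz}(\Gamma_1')$ and $\pi_{yz}(\Gamma_2)$ and lifting each grid move to the cube. Here one uses the fact that the Isotopy Lemma is symmetric in the three coordinate directions, and in particular that sliding segments parallel to the $x$- or $y$-axis along the $z$-direction leaves the $(x,y)$-projection unchanged up to moves we have already accounted for; consequently this second lifting does not destroy the match already achieved on the $(x,y)$-projection. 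After both projections agree, the two cube diagrams are equal, and we have exhibited a finite sequence of cube stabilizations/destabilizations and cube commutations taking $\Gamma_1$ to $\Gamma_2$.
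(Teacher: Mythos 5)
Your overall strategy---prove an Isotopy Lemma first, then lift a grid-move sequence from Proposition~\ref{gridmovesprop} to the cube---is the same as the paper's, but the proposal has genuine gaps at the two places where all the difficulty lives. First, the final step (``rerun the argument in the $(y,z)$-plane'') does not work as stated. Realizing a $(y,z)$-grid move in the cube generically changes the $(x,y)$-projection: every cube stabilization alters all three projections, and the Isotopy Lemma itself introduces stabilizations. So the claim that the second pass leaves the $(x,y)$-match intact ``up to moves we have already accounted for'' is exactly the assertion that needs proof, and without it the two-pass scheme could cycle indefinitely. The paper avoids a second pass entirely: once the $(x,y)$-projections agree, the two cubes can differ only in the $z$-ordering of their $z$-flats, and the proof of Lemma~\ref{cubebendlemma} arranges (via the partial ordering on $z$-cube bends) that these orderings already agree, so the diagrams are matched by $z$-cube bend moves, which fix the $(x,y)$-projection; those moves are then converted to legitimate cube moves by the Isotopy Lemma.

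Second, your treatment of a grid commutation that is not a legal cube commutation is too optimistic. ``Slide the offending segments out of the way'' is not always possible directly: the $z$-cube bend you need to move may be blocked by another $z$-cube bend that lies over it in the $(x,y)$-projection (the paper's \emph{barriers}, Figure~\ref{untwistbends}), and resolving this forces stabilizations at the bends, grouped commutations, and postponed destabilizations---i.e., a global modification of the grid-move sequence so that each step preserves a partial ordering, not a local fix. Relatedly, each invocation of the Isotopy Lemma creates new segments that must themselves be carried through all later moves of the sequence and eventually destabilized; the paper's proof of Theorem~\ref{Moves_Theorem} includes an explicit bookkeeping (numbering the segments, bounding the moves applied to each) to show this replacement process terminates. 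Your proposal contains no termination argument, and without one the ``lift each move, fixing obstructions as they arise'' procedure is not known to produce a \emph{finite} sequence of cube moves.
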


\medskip

In \cite{Reid}, Reidemeister proved the following fact:  Two oriented piecewise linear links are ambient isotopic if and only if they can be connected through a sequence of vertex creation/destruction moves and triangle moves.  Recall that a triangle move simply replaces one segment of a piecewise linear link with two consecutive segments (or replaces two consecutive segments with a third segment) if the three segments are the boundary of a triangular region that does not intersect the link.  The vertex and triangle moves  have corresponding moves in cube diagrams.  For example, the cube stablization/destablization move is similar to the vertex creation/destruction move.  We will describe a  triangle-like move that can, in the end, be replaced by a sequence of cube moves using the Isotopy Lemma.

\medskip

First, we describe a method to transform one cube diagram to another using moves that are reminiscent of triangle moves, but which are less restrictive than cube commutations. In particular, the crossing conditions on the projections are ignored. Let $b$ be a $z$-cube bend in a $z$-flat $B$, such that $b$ is composed of two segments $b_1$ and $b_2$. Insert a copy of $B$ anywhere in the cube along the $z$-axis. Name the new copy $B'$ and suppose the translate of $b$ in $B'$ is named $b'$ and is composed of two segments $b_1'$ and $b_2'$. The parallel segments $b_1$ and $b_1^\prime$ form two sides of a rectangle $r_1$, and the parallel segments $b_2$ and $b_2^\prime$ form two sides of a rectangle $r_2$. Suppose the interiors of $r_1$ and $r_2$ are disjoint from the link (see Figure~\ref{cubebendmove}). Then the $z$-flat $B$ can be removed from the cube, while the $z$-flat $B'$ is kept in the cube. Call such a move a {\it $z$-cube bend move}. Similarly define {\it $x$-cube bend moves} and {\it $y$-cube bend moves}. Note that the marking conditions continue to hold but the crossing conditions may no longer be satisfied.
\begin{figure}[H]
\includegraphics[scale=.5]{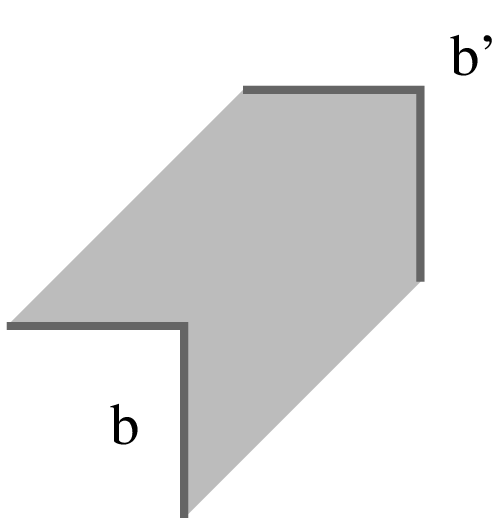}
\caption{If the shaded region is disjoint from the link, then $b$ can be replaced with $b'$.}
\label{cubebendmove}
\end{figure}

\medskip

\begin{lemma}
\label{cubebendlemma}
Let $\Gamma$ and $\Gamma'$ be two cube diagrams representing the same oriented link. Then $\Gamma'$ can be obtained from $\Gamma$ from a finite sequence of cube stabilizations and cube bend moves.
\end{lemma}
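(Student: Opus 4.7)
My plan is to work projection-by-projection, following the strategy outlined in the paragraph above the lemma: first match the $(x,y)$-projections of $\Gamma$ and $\Gamma'$, and then fix the remaining coordinate information. Let $G$ and $G'$ denote the $(x,y)$-projections of $\Gamma$ and $\Gamma'$. By Proposition~\ref{projection}, both $G$ and $G'$ are grid diagrams for the same oriented link $L$, and by Proposition~\ref{gridmovesprop}, there is a finite sequence of grid stabilizations/destabilizations and grid commutations that transforms $G$ into $G'$.

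I would lift this sequence move-by-move to a sequence of cube operations. Each grid stabilization of $G$ lifts directly to a cube stabilization of $\Gamma$ (a cube stabilization projects to a grid stabilization in each face, as illustrated in Figure~\ref{cube-stab-move}). Each grid commutation swaps two adjacent rows or columns, which in the cube corresponds to swapping two adjacent $y$-flats or $x$-flats. In general such a swap need not be a valid cube commutation (the flats may interleave, or the crossing conditions in the $(y,z)$- or $(z,x)$-projection may fail); however, it can always be realized by a cube bend move on one of the flats, because cube bend moves impose no interleaving or crossing-condition restrictions, only the sweep-rectangle disjointness, which translates back to the emptiness condition that makes the grid commutation valid in the first place. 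The end of this procedure is a cube-like structure $\tilde{\Gamma}$ whose $(x,y)$-projection equals $G'$.

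Next I would use $z$-cube bend moves, which preserve the $(x,y)$-projection, to adjust the remaining freedom in $\tilde{\Gamma}$—namely the $z$-ordering of the $z$-flats and the $z$-coordinates of the other markings—so that the result matches $\Gamma'$. Since $\tilde{\Gamma}$ and $\Gamma'$ share the same $(x,y)$-projection and represent the same link $L$, any discrepancy between their $z$-data is an isotopy of $L$ along the $z$-axis, which decomposes into individual $z$-cube bend moves. When a direct swap fails the disjointness condition, I would insert auxiliary cube stabilizations to create room, perform the bend moves, and destabilize afterward.

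The main obstacle I anticipate is verifying that the sweep-rectangle disjointness required by each cube bend move can always be met after sufficient stabilization, and that the marking conditions continue to hold at every intermediate step. Conceptually this is the analogue, in the cube-diagram setting, of showing that Reidemeister's triangle moves decompose into axis-aligned rectangular swaps—precisely the interpretation hinted at by the discussion of Reidemeister's theorem immediately following the lemma. If this projection-based argument proves too delicate, the alternative would be to invoke Reidemeister's theorem directly: realize each vertex creation/destruction move by a cube stabilization/destabilization, and realize each triangle move, after stabilizing to make the relevant segments fit inside a single flat, by a cube bend move whose empty sweep region is exactly the empty Reidemeister triangle.
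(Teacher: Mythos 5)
There is a genuine gap at the center of your argument: the claim that a grid commutation in the $(x,y)$-projection ``can always be realized by a cube bend move\dots because cube bend moves impose\dots only the sweep-rectangle disjointness, which translates back to the emptiness condition that makes the grid commutation valid in the first place'' is false. The validity of a grid commutation is a condition on the two rows or columns being swapped in the two-dimensional projection; the cube bend move requires the two swept rectangles to be disjoint from the link in $\mathbb{R}^3$, and these rectangles can be pierced by segments of the link belonging to entirely different flats (the segment $s$ in Figure~\ref{badcom3} is exactly such an obstruction). The paper's proof is devoted almost entirely to repairing this failure: one must first move the obstructing $z$-cube bend out of the way by a $z$-cube bend move, but that move can itself be blocked by a \emph{barrier} --- a pair of $z$-cube bends that are comparable in the partial order determined by over/under crossings in the $(x,y)$-projection. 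The resolution is to stabilize so that the relevant bends become incomparable, and, more globally, to modify the sequence of grid moves so that every single move induces a partial ordering on the bends of the next diagram (stabilizing at the vertices of the four bends adjacent to a commutation, commuting them in groups, and postponing destabilizations that would destroy the ordering). None of this machinery appears in your proposal; you name the obstacle in your final paragraph but do not supply an argument that it can be overcome, and ``insert auxiliary stabilizations to create room'' is not enough by itself, since the issue is not lack of room but a cyclic over/under constraint that stabilization must be used to break in a specific, ordered way.

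Your opening and closing steps do match the paper's: lift the grid-move sequence from Proposition~\ref{gridmovesprop}, with stabilizations lifting directly, and finish by reconciling the $z$-data with $z$-cube bend moves once the $(x,y)$-projections agree. But the paper's final reconciliation also leans on the partial orderings of the $z$-flats of $\Gamma''$ and $\Gamma'$ agreeing, which is a consequence of the careful bookkeeping above; without it, your claim that ``any discrepancy between their $z$-data\dots decomposes into individual $z$-cube bend moves'' is again unsupported for the same reason (a desired reordering of $z$-flats can be blocked by barriers). The alternative route you sketch via Reidemeister's triangle moves is not what the paper does and would face the same difficulty: a triangle move placed inside a single flat still sweeps a region whose three-dimensional disjointness from the rest of the link must be arranged, and arranging it is precisely the barrier problem.
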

\begin{proof}
Let $G$ be the grid diagram associated to the $(x,y)$-projection of $\Gamma$ and let $G'$ be the grid diagram associated to the $(x,y)$-projection of $\Gamma'$. Then there is a sequence of grid diagram moves taking $G$ to $G'$ by Proposition~\ref{gridmovesprop}. This sequence can be used to induce a sequence of moves on the cube diagram taking $\Gamma$ to $\Gamma'$.

\medskip

There are two possible ways a commutation move on the $(x,y)$-projection of a cube diagram might not induce a cube bend move on the cube: either switching the corresponding flats in the  cube is not a cube bend move or switching the corresponding flats breaks the crossing conditions for the $(x,y)$-projection of the cube (see Figure~\ref{badcom3}). In general cube bend moves do not have to preserve the crossing conditions in any projection;  in what follows we consider only cases where the crossing conditions are preserved in the $(x, y)$-projection.
\begin{figure}[H]
\includegraphics[scale=.3]{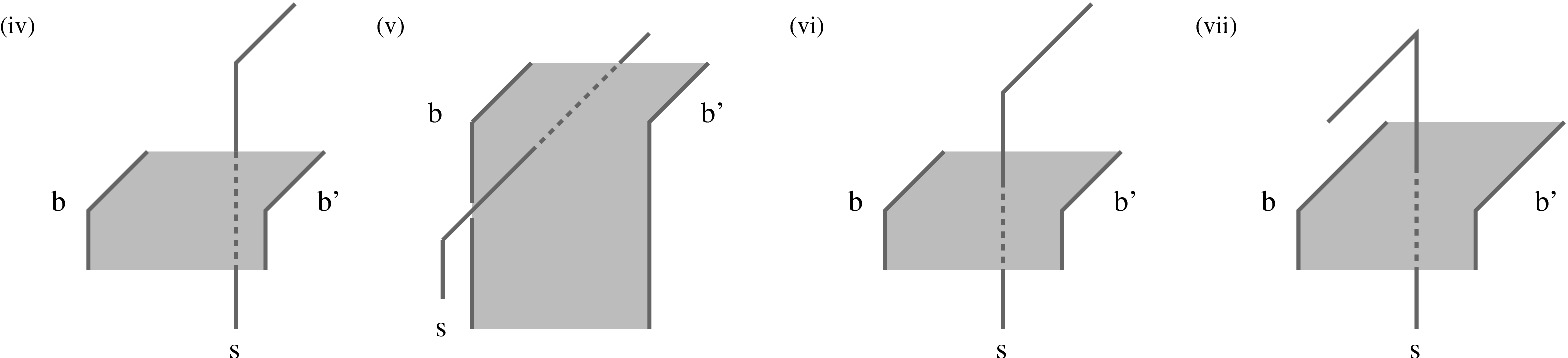}
\caption{Each of these figures depicts an attempted $x$-cube bend move where $b$ is replaced with $b'$. These cases are from the bottom row of  Figure~\ref{badcom2} where a commutation move on the grid diagram associated to the $(x,y)$-projection does not induce a cube bend move.}
\label{badcom3}
\end{figure}

Each of the $x$-cube  bend moves in Figure~\ref{badcom3} can be achieved if the $z$-flat containing the segment $s$ is moved via a $z$-cube bend move into the appropriate spot: in front of the shaded regions for cases (iv), (vi), and (vii), and behind the shaded region for (v).  Because of the crossing conditions in the $(x,y)$-projection, the segment $s$ is free to move where necessary as part of a $z$-cube bend move. However, the other segment in the $z$-cube bend containing $s$ may prevent the necessary $z$-cube bend move (see the first picture of Figure~\ref{untwistbends}).
\begin{figure}[H]
\includegraphics[scale=.25]{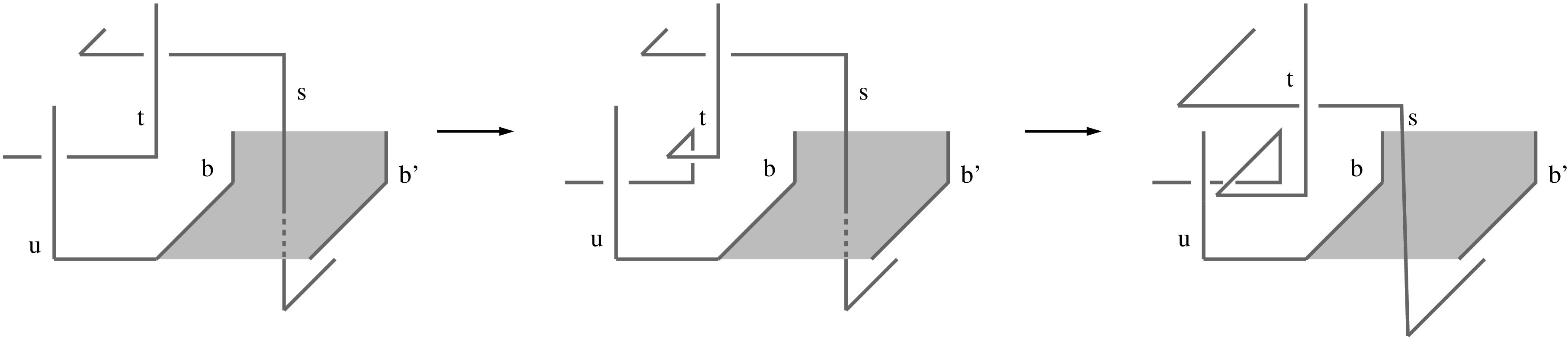}
\caption{The first picture illustrates why partial ordering is important to performing cube bend moves: the $z$-cube bend containing $t$ prevents the $z$-cube bend containing $s$ from being moved in front of the cube bend $u$.  In the second picture the cube bend $u$ is no longer greater than the cube bend containing $s$ (by stabilizing at $t$).  The third picture shows that $t$ and $s$ can  be moved forward. The result is a diagram where the $x$-cube bend move taking $b$ to $b'$ can be performed.}
\label{untwistbends}
\end{figure}

\medskip

Put a partial ordering on the $z$-flats as follows. Let $z_1$ and $z_2$ be two $z$-cube bends. If $z_1$ and $z_2$ intersect when projected to the $(x,y)$-plane and if $z_1$ has a greater $z$-coordinate than the $z$-coordinate of $z_2$, then $z_1>z_2$.  Suppose that we wish to perform an $x$-cube bend move that corresponds to a grid commutation in the $(x,y)$-projection.  Furthermore, in order to perform the $x$-cube bend move, suppose a $z$-cube bend $z_2$ is required to be moved so that its $z$-coordinate is greater than another $z$-cube bend $z_1$ (see the first picture of Figure~\ref{untwistbends}).  If $z_1>z_2$, then we say that $z_1$-cube bend and $z_2$-cube bend are {\em barriers} to performing the $x$-cube bend move.  If $z_1$ and $z_2$ are not comparable, then  $z$-cube bend moves can be performed to move $z_2$ so that its $z$-coordinate is greater than the $z$-coordinate of $z_1$. (If $z_2>z_1$, then the $z$-coordinate of $z_2$ is already greater then the $z$-coordinate of $z_1$.)

\medskip

For example, in Figure~\ref{untwistbends} the $z$-cube bend containing $s$ is less than the $z$-cube bend $u$, making the $z$-cube bend $u$ a barrier to moving $s$ into position where a $x$-cube bend move can be performed that moves $b$ to $b'$.  However, by stabilizing at $t$ (second picture), the two bends are no longer comparable, and the $z$-cube bends containing $s$ and $t$ can be moved together so that the $z$-coordinate of $s$ is greater than the $z$-coordinate $u$ (third picture).  In general, we can always stabilize so that two $z$-cube bends are no longer comparable.  Note that barriers to an $x$-cube bend move that correspond to a grid commutation move in the $(x,y)$-projection only involve the $z$-cube bends that contain or abut the $x$-cube bends being commuted in the $(x,y)$-projection.  Therefore, at most four comparisons need to be made for each such $x$-cube bend move.

\medskip

There is a similar notion of barriers for grid diagrams.  If $G$ is a grid diagram, partition the segments of $G$ into bends such that each bend in $G$ has an $X$ marking for a vertex.  Suppose that a partial ordering can be put on the bends such that a bend that crosses over another bend is considered greater than that bend.  (Partial orderings always exists after possibly stabilizing $G$, cf. Lemma~\ref{nobendtwistedlemma}.)  A grid commutation move on $G$ may or may not induce a partial ordering on the resulting grid.  If $G$ is a grid diagram that is the $(x,y)$-projection of a cube $\Gamma$ that minimally satisfies the marking conditions, and if a grid commutation move on $G$ induces a partial ordering on the resulting grid, then there are no barriers to making the $x$-cube bend move in $\Gamma$.

\medskip

Let $S:G=G_1 \ra G_2 \ra \cdots \ra G_m=G'$ be a sequence of grid moves taking $G$ to $G'$. Partition the segments of $G_i$ into bends such that each bend in $G_i$ has an $X$ marking for a vertex. The partition for $G=G_1$ corresponds to choosing $z$-bend partition in $\Gamma$, so the bends of $G_1$ can be partially ordered (similarly, $G_m=G'$ can be partially ordered). Suppose that for all moves $G_i\ra G_{i+1}$ in $S$, the partial ordering on $G_i$ induces a partial ordering on $G_{i+1}$.  Then $S$ induces a sequence of cube stabilizations/destabilizations and cube bend moves taking $\Gamma$ to $\Gamma''$, where the $(x,y)$-projection of $\Gamma''$ is $G'$ but $\Gamma'$ may not satisfy the crossing conditions in the $(y,z)$-projection or the $(z,x)$-projection.

\medskip

Thus, it remains to find such a sequence of grid moves.  Now let $S:G=G_1 \ra G_2 \ra \cdots \ra G_m=G'$ be any sequence of grid moves taking $G$ to $G'$ (in particular, the sequence given by Proposition~\ref{gridmovesprop} can be used). If a move $G_i\ra G_{i+1}$ in $S$ does not induce a partial ordering on $G_{i+1}$, then we alter the sequence so that it does. First, note that a stabilization move always induces a partial ordering on the resulting diagram, while destabilization moves and commutation moves may not.

\medskip

Starting with $G=G_1$, step down the sequence $S$ making the following modifications when necessary:  If a destabilization move does not induce a partial ordering (say by contracting two bends in $G_i$ to get one bend in $G_{i+1}$), then postpone the destabilization move until later as explained below. Furthermore, replace each commutation move $G_i \ra G_{i+1}$ with a sequence of moves where each move in the sequence induces a partial ordering on the next grid diagram in the sequence.  For example, consider the commutation shown in Figure~\ref{stabcommuteunstab}.   Let $x_1, x_2, y_1,$ and $y_2$ be the bends in $G_i$ that contain or abut the two segments $x$ and $y$ that are being commuted.  For each $x_i$ or $y_i$, stabilize at the vertex of the bend if both segments in the bend have length greater than 1.  Each bend in the new set of bends contains a length 1 segment and are untwisted.  Therefore, if a bend in this set is less than another bend in the grid diagram, then it is less than all other bends in the grid diagram to which it can be compared.  Similarly, if a bend in the set is greater than another bend, then it is greater than all other comparable bends.

\medskip

Suppose that the segments $x$ and $y$ have length greater than 1 (see second picture).  Both bends containing them are greater than all other comparable bends, and therefore one can assign the bend containing $y$ to be greater than the bend containing $x$ after they are commuted. Similarly, if the length of $x$ is 1, then the bend containing $x$ is less than all other comparable bends and the bend containing $y$ is greater than all comparable bends. Therefore the bend containing $y$ can be assigned to be greater than the bend containing $x$ after they are commuted.  Similar reasoning applies to the other bends in the second picture of Figure~\ref{stabcommuteunstab}.

\medskip

Thus, the segments $x$ and $y$ and their associated stabilized bends can be commuted while continuing to induce a partial order on the grids after each move (See third picture of Figure~\ref{stabcommuteunstab}).  Finally, perform destabilizations if they induce a partial ordering on the resulting diagram (see fourth picture).  If a previous stabilization in this step could not be destabilized, then move it with its associated segments as described below.

\begin{figure}[H]
\includegraphics[scale=.2]{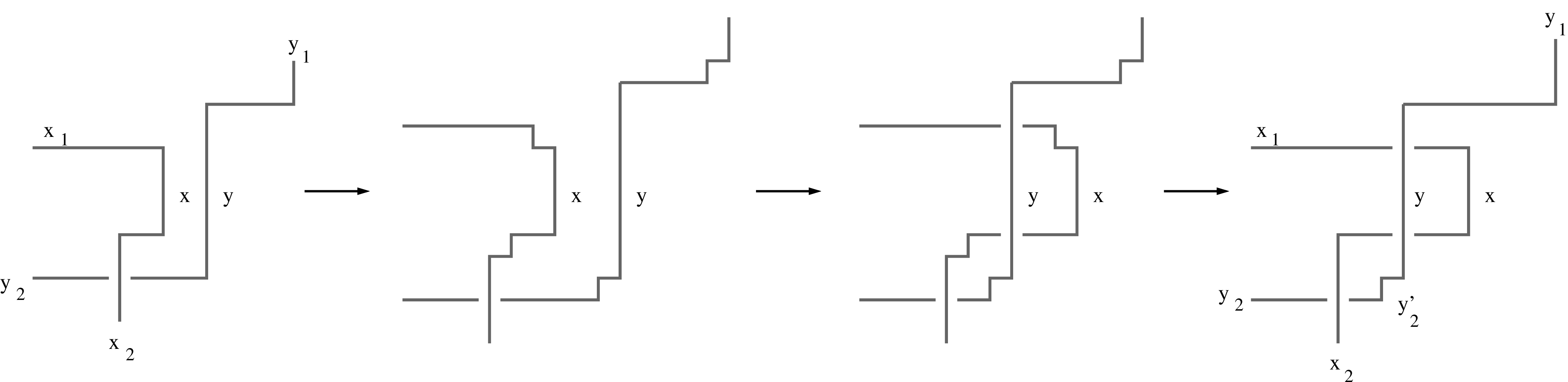}
\caption{If $x$ and $y$ are commuted in the first picture, then the resulting grid diagram (not shown) does not have a partial ordering because $x_2$ would be both greater than and lesser than $y_2$. To commute $x$ and $y$ in a way that induces a partial ordering, stabilize each of the bends, commute separately, and destabilize when possible.}
\label{stabcommuteunstab}
\end{figure}

Note that the three commutation cases where there are only 3 bends is done similarly.  The case where the two segments $x$ and $y$ are disjoint when projected to $y$-axis can be commuted directly; that commutation automatically induces a partial order on the resulting grid diagram.

\medskip

Next we deal with modifying the remaining sequence to take into account stabilizations that were either postponed or created in the moves above.  In both cases, a single bend $a=a_1\cup a_2$ in the grid diagram in the original sequence has been replaced by two bends $a_1\cup a'_1$ and $a_2 \cup a'_2$ such that $a'_1$ and $a'_2$ are of length 1 in the new sequence. If in the original sequence, the segment $a_1$ is commuted, then in the modified sequence, the segments $a_1,$ $a'_1$ and $a'_2$ are commuted as a group so that, at the end of the group of moves, $a'_1$ and $a'_2$ are still length 1 (see Figure~\ref{invproof3}).
\begin{figure}[H]
\includegraphics[scale=.4]{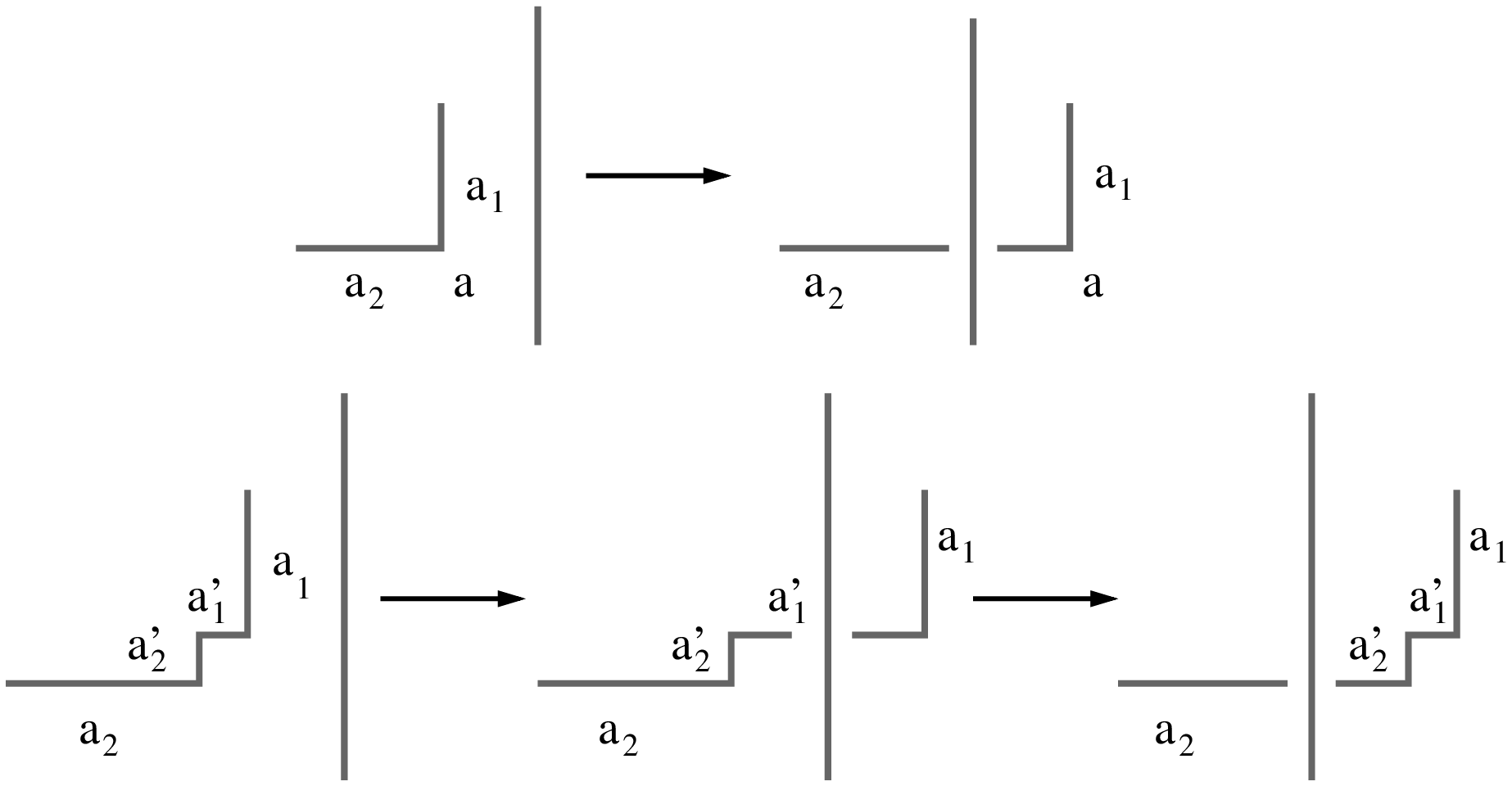}
\caption{A commutation move before stabilization and two moves commuting the group of segments after a stabilization.}
\label{invproof3}
\end{figure}
Observe that during the commutation as a group, either segment $a'_1$ or $a'_2$ is always of length $1$. For the same reasons as described above, the modified sequence of grid moves continues to induce a partial ordering after each move. If either segments $a_1$ or $a_2$ are part of a destabilization move later in the sequence, then also destabilize the bend $a'_1\cup a'_2$ if the move induces a partial ordering on the resulting grid diagram (if not, destabilize $a'_1\cup a'_2$ and postpone the destabilization of the bend containing $a_1$ or $a_2$). Finally, the sequence must be modified by adding a destabilization move (destabilizing $a'_1 \cup a'_2$  which then contracts $a_1\cup a'_1 \cup a'_2 \cup a_2$ into $a$) once there are no more commutations involving $a$ in the original sequence and once the destabilization induces a partial ordering on the resulting grid diagram.

\medskip

We have shown that there exists a sequence of grid moves on $G$ that induces a sequence of cube stabilizations/destabilizations and cube bend moves taking $\Gamma$ to $\Gamma''$.  Since the $(x,y)$-projections of both $\Gamma'$ and  $\Gamma''$ are $G'$, the partial orderings on the $z$-flats agree. Therefore $\Gamma''$ can be transformed into $\Gamma'$ through a sequence of $z$-cube bend moves.
\end{proof}

\medskip

\begin{proof}[Proof of Theorem \ref{Moves_Theorem}]
Let $\Gamma$ and $\Gamma'$ be two cube diagrams representing the same oriented link.
Lemma \ref{cubebendlemma} states that there is a sequence $\Sigma:\Gamma =\Gamma_1 \ra \Gamma_2 \cdots \ra \Gamma_n=\Gamma'$ of cube stabilizations/destabilizations and cube bend moves taking $\Gamma$ to $\Gamma'$. It remains to show that this sequence can be modified to only contain cube moves (not cube bend moves).

\medskip

Starting at the beginning of the sequence, we use the Isotopy Lemma to replace each cube bend move in $\Sigma$ one at a time with a subsequence of  cube moves.  Some care must be taken---the Isotopy Lemma can and will introduce new stabilizations, which increases the cube size and introduces new segments in the link.  Number each segment in the original $\Gamma$ and each new segment introduced by a stabilization move in $\Sigma$; let $N$ be the total number of segments. Each time the Isotopy Lemma introduces a new set of segments through a stabilization, assign those segments the same number as one of the adjacent segments (so that the number becomes a label for the set of connected  segments).  The sequence $\Sigma$ can then be modified to incorporate the new segments.  For example, if the original numbered segment is moved by a cube bend move later in the sequence, we move the entire set of segments with the same number as a unit, as allowed by the Isotopy Lemma.  In this way we can move down the sequence, replacing each cube bend move with a subsequence of cube moves, assigning numbers to any new segments introduced, and introducing new cube bend moves later in the sequence to account for moving the new segments. The number of segments counted or introduced in $\Sigma$ is $N$ and there are only a finite number of moves done to each of those segments in the sequence $\Sigma$. Also, each stabilization introduced by the Isotopy Lemma must be followed later in $\Sigma$ by a destabilization.  Therefore, the replacement of cube bend moves with subsequences of cube moves must eventually come to an end. Thus $\Sigma$ induces a sequence of cube moves that takes $\Gamma$ to $\Gamma'$.
\end{proof}

\medskip

To check for invariants of links, one need only check that a property of a cube diagram remains invariant under the generating cube stabilization move (Figure \ref{cubestab}) and the 4 generating cube commutation moves (Figure \ref{cubecom}).

\medskip

\begin{Corollary}\label{Moves_Corollary} Any property of a cube diagram that does not change under the 5 generating cube commutation and cube stabilization moves is an invariant of the link.
\end{Corollary}

\medskip

A fact that emerged from the proof of Theorem~\ref{Moves_Theorem} is the following scholium:

\begin{scholium}
Let $G$ and $G'$ be two grid diagrams representing the same oriented link.  Let $\mathcal{B}$  be the partition of bends of a grid diagram such that the vertex of each bend is an $X$.  If there exists a partial ordering on the bends in $\mathcal{B}$ and $\mathcal{B'}$, then there exists a sequence $S:G{=}G_1 \ra G_2 \ra \cdots\ra G_n{=}G'$ of stabilization and commutation moves such that, for each transformation $G_i \ra G_{i+1}$, the partial ordering on $\mathcal{B}_i$ induces a partial ordering on $\mathcal{B}_{i+1}$.
\end{scholium}

%---------------------------------------------------------------------------------------------
%---------------------------------------------------------------------------------------------
\medskip
\section{A cube homology theory}
\label{cubehomology}
\medskip

As an example of Theorem~\ref{Moves_Theorem} and Corollary~\ref{Moves_Corollary} we describe a homology that is clearly invariant under cube moves, and hence
a knot invariant.  One nice aspect of this example is that it can be shown to be a knot invariant using other methods.  Thus, it can be viewed as a check of the main theorem of this paper.  Another nice aspect of the homology is that it gives an invariant of a 3-dimensional representation of the knot.  One hopes that this viewpoint may lead to new insights into knot Floer homology.

\medskip

Given a cube diagram $\Gamma$ and a choice of two projections, we associate to $\Gamma$ a bigraded chain complex. The generators for this complex do not depend on the choice of projections; however, the gradings and differential do. Since many of the following constructions depend on the choice of two projections, we take some time to gather notation. If an object depends on only one projection of the cube, it is indexed by the two coordinate vectors which span that projection (i.e. if an object $\mathcal{O}$ depends only on the $(x,y)$-projection, then it will be represented as $\mathcal{O}_{xy}$). However, if an object depends on two projections simultaneously, then it will be indexed by the shared basis vector of the two projections (i.e. if an object $\mathcal{O}$ depends on both the $(x,y)$-projection and $(y,z)$-projection, then it will be represented as $\mathcal{O}_y$).

\medskip

Let $P$ be the set of integer lattice points in $\Gamma$ with no coordinate equal to $n$, the size of $\Gamma$. A \textit{cube state ${\bf s}$ of $\Gamma$} is a subset of $P$ such that

\begin{itemize}
\item $|{\bf s}|=n$, and\\

\item no two points in ${\bf s}$ lie on the same face of any flat in $\Gamma$.
\end{itemize}

\medskip

Each cube state is represented by a collection of dots on the cube diagram. See Figure \ref{cubestate} for an example of a cube state.
\begin{figure}[H]
\includegraphics[scale=.3]{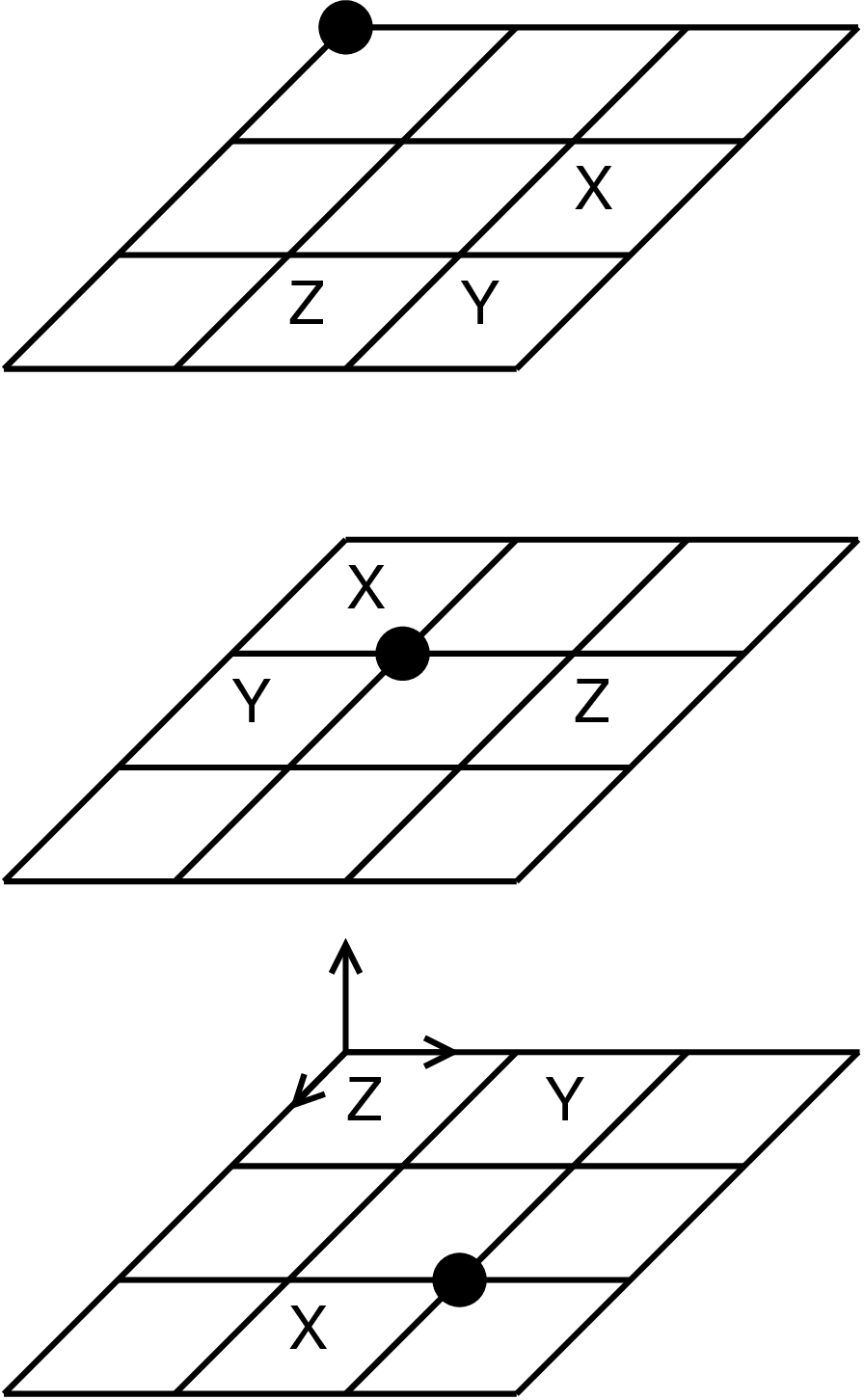}
\caption{A cube state for a size 3 cube diagram.} \label{cubestate}
\end{figure}
Denote the set of all cube states by ${\bf S}$.

\smallskip

In order to define the gradings, we define several functions. Let $A$ and $B$ be two finite sets of points in $\mathbb{R}^3$.

\begin{itemize}
\item $I_{xy}(A,B)$ is defined to be the number of pairs $(a_1,a_2,a_3)\in A$ and $(b_1,b_2,b_3)\in B$ such that $a_1<b_1$ and $a_2<b_2$;\\

\item $I_{yz}(A,B)$ is defined to be the number of pairs $(a_1,a_2,a_3)\in A$ and $(b_1,b_2,b_3)\in B$ such that $a_2<b_2$ and $a_3<b_3$;\\

\item $I_{zx}(A,B)$ is defined to be the number of pairs $(a_1,a_2,a_3)\in A$ and
$(b_1,b_2,b_3)\in B$ such that $a_1<b_1$ and $a_3<b_3$.
\end{itemize}

\smallskip

Define $J_{xy}(A,B)=\left(I_{xy}(A,B)+I_{xy}(B,A)\right)/2$, and similarly define $J_{yz}$ and $J_{zx}$. Moreover, these functions can be extended bilinearly over formal sums and differences. Figure \ref{helper} illustrates a geometric interpretation for one of the functions.
\begin{figure}[H]
\includegraphics[scale=.3]{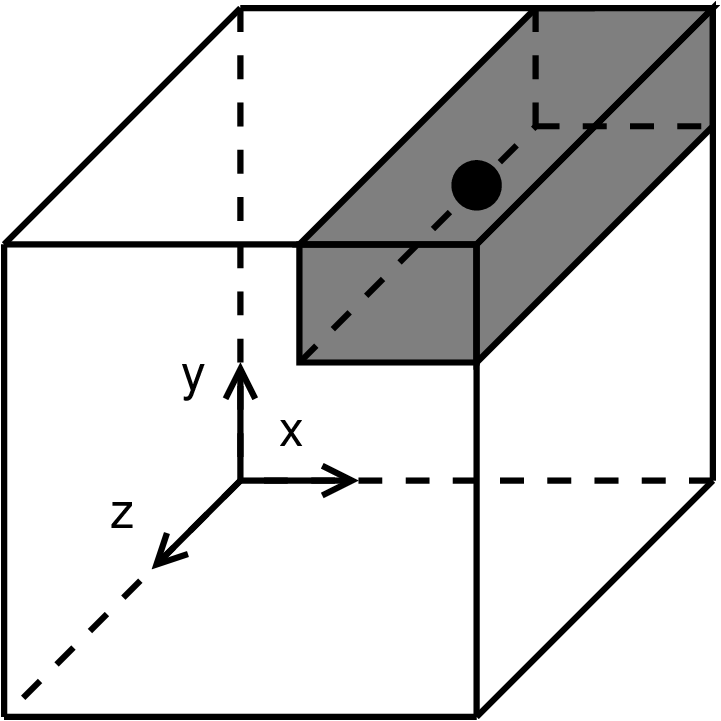}
\caption{For each point in $A$, the function $I_{xy}(A,B)$ counts the number of points of $B$ which are in the above cuboid.} \label{helper}
\end{figure}

\medskip

Define the {\it Maslov gradings} by

\begin{itemize}
    \item $M_{xy}({\bf s}) = J_{xy}({\bf s}-\mathcal{X},{\bf s}-\mathcal{X})+1$,\\

    \item $M_{yz}({\bf s}) = J_{yz}({\bf s}-\mathcal{Y},{\bf s}-\mathcal{Y})+1$, and\\

    \item $M_{zx}({\bf s}) = J_{zx}({\bf s}-\mathcal{Z},{\bf s}-\mathcal{Z})+1.$
\end{itemize}
Define the {\it Alexander gradings} by

\begin{itemize}
    \item $A_{xy}({\bf s}) = J_{xy}({\bf s}-\frac{1}{2}(\mathcal{Z}+\mathcal{X}),\mathcal{Z}-\mathcal{X})+\frac{n-1}{2}$,\\

    \item $A_{yz}({\bf s}) = J_{yz}({\bf s}-\frac{1}{2}(\mathcal{X}+\mathcal{Y}),\mathcal{X}-\mathcal{Y})+\frac{n-1}{2}$, and\\

    \item $A_{zx}({\bf s}) = J_{zx}({\bf s}-\frac{1}{2}(\mathcal{Y}+\mathcal{Z}),\mathcal{Y}-\mathcal{Z})+\frac{n-1}{2}$.
\end{itemize}

\medskip

The $3$-torus $\mathbb{T}^3\cong S^1\times S^1\times S^1$ is a natural quotient of the cube diagram $\Gamma$. An {\em $(x,y)$-cylinder} $c$ in $\Gamma$ (viewed as on the $3$-torus) is a cuboid with integer vertices such that all edges parallel to the $z$-axis are length $n$. Similarly, define $(y,z)$-cylinders and $(z,x)$-cylinders. Figure \ref{cylex} shows an example of an $(x,y)$-cylinder in a cube diagram. We think of $\Gamma$ as a $3$-torus in order to conveniently define cylinders. With the definition of a cylinder understood, we will continue to work with the cube $\Gamma$, not the $3$-torus.

\medskip

For the remainder of the section, we choose the $(x,y)$-projection and $(y,z)$-projection for a cube diagram $\Gamma$. Similar constructions hold for any other choice of two projections. Let $\pi_{xy}:\mathbb{R}^3\to\mathbb{R}^2$ be the projection map to the $(x,y)$-plane and $\pi_{yz}:\mathbb{R}^3\to\mathbb{R}^2$ be the projection map to the $(y,z)$-plane.
Let ${\bf s}$ and ${\bf t}$ be two cube states.  A {\it $(x,y)$-cylinder $c$ connecting ${\bf s}$ to ${\bf t}$} is an $(x,y)$-cylinder such that:

\begin{enumerate}
\item the cube states ${\bf s}$ and ${\bf t}$ differ at exactly two points, $s_1,s_2\in{\bf s}$ and $t_1,t_2\in{\bf t}$,\\

\item the $(y,z)$-projections of ${\bf s}$ and ${\bf t}$ agree, that is $\pi_{yz}({\bf s}) = \pi_{yz}({\bf t})$,\\

\item the four points $\pi_{xy}(s_1),\pi_{xy}(s_2),\pi_{xy}(t_1)$ and $\pi_{xy}(t_2)$ are corners of the rectangle $r=\pi_{xy}(c)$, and\\

\item all segments parallel to the $x$-axis in the boundary of $r$ oriented by starting at a point of $\pi_{xy}({\bf s})$ and ending at a point of $\pi_{xy}({\bf t})$ must have the same orientation as the orientation that the boundary of $r$ inherits from the $(x,y)$-projection.
\end{enumerate}

\medskip

Figure \ref{cylex} shows an example of an $(x,y)$-cylinder connecting ${\bf s}$ to ${\bf t}$.
\begin{figure}[H]
\includegraphics[scale=.3]{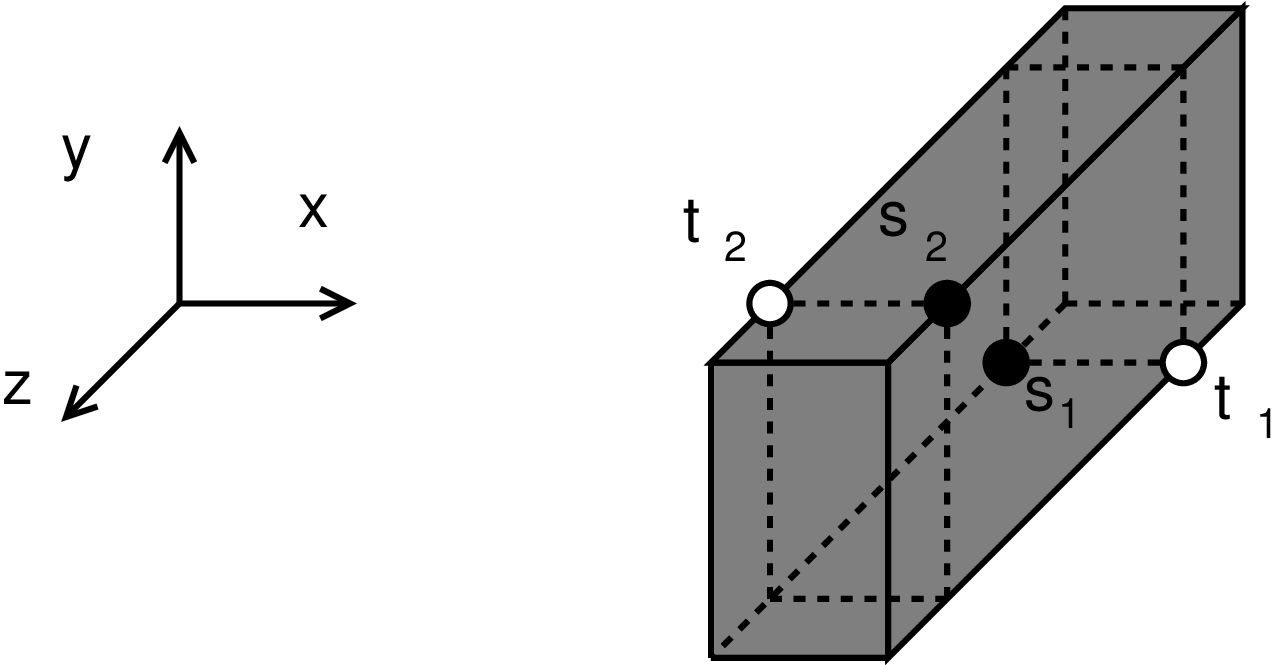}
\caption{An example of an $(x,y)$-cylinder connecting ${\bf s}$ to ${\bf t}$.} \label{cylex}
\end{figure}

Similarly define a $(y,z)$-cylinder from ${\bf s}$ to ${\bf t}$ by interchanging the roles of $\pi_{xy}$ and $\pi_{yz}$ in the definition and using segments parallel to the $y$-axis in condition 4. Observe that this definition depends on the choice of the two projections. For each choice of two projections, there are new definitions for the appropriate types of cylinders connecting two states.

\medskip

A cylinder $c$ connecting ${\bf s}$ to ${\bf t}$ is said to be {\it empty} if
Int$(c)\cap {\bf s}=\emptyset$ or equivalently if Int$(c)\cap {\bf t}=\emptyset$.
Let $\mathrm{Cyl}_{xy}^\circ({\bf s},{\bf t})$ denote the set of empty $(x,y)$-cylinders from ${\bf s}$ to ${\bf t}$ and $\mathrm{Cyl}_{yz}^\circ({\bf s},{\bf t})$ denote the set of empty $(y,z)$-cylinders from ${\bf s}$ to ${\bf t}$.

\medskip

Label the markings of $\Gamma$ so that $\mathcal{X}=\{X_i\}_{i=1}^n$, $\mathcal{Y}=\{Y_i\}_{i=1}^n$, and $\mathcal{Z}=\{Z_i\}_{i=1}^n$ and define three sets of corresponding variables $\{\mathtt{\bf \bf X}_i\}_{i=1}^n, \{\mathtt{\bf Y}_i\}_{i=1}^n,$ and $\{\mathtt{\bf Z}_i\}_{i=1}^n$. Define $R_y$ to be the polynomial algebra over $\mathbb{Z}/2\mathbb{Z}$ generated by the variables $\{\mathtt{\bf X}_i\}_{i=1}^n$ and $\{\mathtt{\bf Y}_i\}_{i=1}^n$. The constant terms of $R_y$ are in the zero grading of all four gradings: $M_{xy}$, $M_{yz}$, $A_{xy},$ and $A_{yz}$. The $\mathtt{\bf X}_i$ have $M_{xy}$ grading $-2$, $A_{xy}$ grading $-1$, and all other gradings $0$. The $\mathtt{\bf Y}_i$ have $M_{yz}$ grading $-2$, $A_{yz}$ grading $-1$, and all other gradings $0$. Similarly define $R_z$ and $R_x$. Define $C_y^-(\Gamma)$ to be the free $R_y$-module with generating set ${\bf S}$. One can similarly define $C_z^-(\Gamma)$ and $C_x^-(\Gamma)$.

\medskip

For a cylinder $c$ in $\Gamma$, let $X_i(c)$ count the number of times the marking $X_i$ appears inside $c$. Similarly define $Y_i(c)$ and $Z_i(c)$. The map $\partial_y^-:C_y^-(\Gamma)\to C_y^-(\Gamma)$ is the differential of the chain complex and is defined by
$$\partial_y^-({\bf s}) = \sum_{{\bf t}\in {\bf S}} \left(\sum_{c\in \text{Cyl}_{xy}^\circ({\bf s},{\bf t})}\mathtt{\bf X}_1^{X_1(c)}\cdots \mathtt{\bf X}_n^{X_n(c)}\cdot {\bf t}+\sum_{c\in \text{Cyl}_{yz}^\circ({\bf s},{\bf t})}\mathtt{\bf Y}_1^{Y_1(c)}\cdots \mathtt{\bf Y}_n^{Y_n(c)}\cdot {\bf t}\right).$$
Similarly define the differentials $\partial_z^-$ and $\partial_x^-$ for the chain complexes $C_z^-(\Gamma)$ and $C_x^-(\Gamma)$ respectively.

\medskip

Define the {\em Maslov grading} on $C_y^-(\Gamma)$ by
$$M_y({\bf s}) = M_{xy}({\bf s}) + M_{yz}({\bf s}),$$
and define the {\it Alexander grading} by
$$A_y({\bf s}) = A_{xy}({\bf s}) + A_{yz}({\bf s}).$$
One can similarly define $M_z,M_x,A_z$, and $A_x$.

\medskip

Before we prove that the machinery created above forms a chain complex and gives a link invariant, we will discuss the relationship between $(C_y^-(\Gamma),\partial_y^-)$ and a chain complex coming from grid diagrams. In \cite{mos} and \cite{most}, a combinatorial description of knot Floer homology is given using grid diagrams. This construction uses a grid diagram to encode the Heegaard diagram, and the count of pseudo-holomorphic disks is given by counting rectangles in the grid diagram. For clarity, we review the construction of \cite{most}.

\medskip

Let $G$ be a grid diagram. Transfer $G$ to the torus by gluing the top and bottom edges and gluing the leftmost and rightmost edges. The grid lines then become grid circles. The resulting diagram is called a {\it toroidal grid diagram} or just grid diagram.

\medskip

Each grid diagram $G$ has an associated chain complex $(C^-(G),\partial^-)$. Let $R$ be the polynomial algebra over $\mathbb{Z}/2\mathbb{Z}$ generated by the set of elements $\{\mathtt{\bf U}_i\}_{i=1}^n$ that are in one-to-one correspondence with $\mathbb{O}=\{O_i\}_{i=1}^n$. The chain complex $C(G)^-$ is generated by the set of states $S_{\text{grid}}(G)$. A state of $G$ is an $n$-tuple of intersection points of the horizontal and vertical circles satisfying the condition that no horizontal (or equivalently no vertical) circle contains more than one intersection point.

\medskip

The complex $C(G)$ has a Maslov grading, given by a function $M:S_{\text{grid}}(G)\to\mathbb{Z}$, and an Alexander filtration level, given by $A:S_{\text{grid}}(G)\to\mathbb{Z}$ if $L$ has an odd number of components or $A:S_{\text{grid}}(G)\to\mathbb{Z}+\frac{1}{2}$ if $L$ has an even number of components. To define these gradings, we use some auxiliary functions. Let $A$ and $B$ be finite sets of points in $\mathbb{R}^2$. Define $I(A,B)$ to be the number of pairs $(a_1,a_2)\in A$ and $(b_1,b_2)\in B$ with $a_1<b_1$ and $a_2<b_2$. Then define $J(A,B)=(I(A,B)+I(B,A))/2$.

\medskip

Take a fundamental domain $[0,n)\times [0,n)$ for the toroidal grid diagram. View a state ${\bf s}\in S_{\text{grid}}(G)$ as a collection of points with integer coordinates, and also view $\mathbb{O}$ and $\mathbb{X}$ as sets of points with half integer coordinates. Then define
$$M({\bf s})=J({\bf s}-\mathbb{O},{\bf s}-\mathbb{O})+1,$$
where $J$ is extended bilinearly over formal sums and differences. Define
$$A({\bf s})=J({\bf s}-\frac{1}{2}(\mathbb{X}+\mathbb{O}),\mathbb{X}-\mathbb{O})+\frac{n-1}{2}.$$

\medskip

The differential $\partial^-$ of this chain complex counts rectangles between two states. Let ${\bf s}$ and ${\bf t}$ be states of a grid diagram $G$. An embedded rectangle $r$ in $G$ connects {\bf s} to {\bf t} if

\begin{itemize}
\item ${\bf s}$ and ${\bf t}$ differ along exactly two horizontal circles,\\

\item all four corners of $r$ are points in ${\bf s}\cup{\bf t}$,\\

\item if we traverse horizontal segments of $r$ in the direction indicated by the orientation inherited from the torus, then the segment is oriented from ${\bf s}$ to ${\bf t}$.
\end{itemize}

\medskip

A rectangle $r$ is {\it empty} if Int$(r)\cap{\bf s}=\emptyset$. The set of all empty rectangles connecting ${\bf s}$ to ${\bf t}$ is denoted Rect$^\circ({\bf s},{\bf t})$. Define the differential by
$$\partial({\bf s})=\sum_{{\bf t}\in S_{\text{grid}}(G)}\sum_{r\in\mathrm{Rect}^\circ({\bf s},{\bf t})}\mathtt{\bf U}_1^{O_1(r)}\cdots \mathtt{\bf U}_n^{O_n(r)}\cdot{\bf t},$$
where $O_i(r)$ is the count of how many times the marking $O_i$ appears in $r$.

\medskip

In \cite{most}, the following lemma is proved
\begin{lemma}
Suppose that $O_i$ and $O_k$ correspond to the same link component of $L$. Then multiplication by $\mathtt{\bf U}_i$ is filtered chain homotopic to multiplication by $\mathtt{\bf U}_k$.
\end{lemma}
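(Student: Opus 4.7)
The plan is to construct an explicit filtered chain homotopy. First I would reduce to the case where $O_i$ and $O_k$ are \emph{$X$-adjacent} on the link, meaning that they are separated along the link by a single $X$ marking $X_j$. If $O_i$ and $O_k$ lie on the same component but are not $X$-adjacent, then traversing that component gives an alternating sequence $O_i = O_{i_0}, X_{j_1}, O_{i_1}, X_{j_2}, \ldots, O_{i_m} = O_k$, and a telescoping mod-$2$ sum of the chain homotopies for consecutive adjacent pairs reduces the general case to the adjacent one.

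For the $X$-adjacent case, I would define an operator $H_{X_j} \colon C^-(G) \to C^-(G)$ by counting empty rectangles whose interior contains $X_j$:
$$H_{X_j}({\bf s}) = \sum_{{\bf t} \in S_{\text{grid}}(G)} \sum_{\substack{r \in \mathrm{Rect}^\circ({\bf s},{\bf t}) \\ X_j \in \mathrm{Int}(r)}} \mathtt{\bf U}_1^{O_1(r)} \cdots \mathtt{\bf U}_n^{O_n(r)} \cdot {\bf t}.$$
The key computation is then the chain-homotopy identity
$$\partial^- \circ H_{X_j} + H_{X_j} \circ \partial^- = \mathtt{\bf U}_i + \mathtt{\bf U}_k,$$
viewed as $R$-linear endomorphisms of $C^-(G)$. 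Once this identity is established, $H_{X_j}$ is the desired chain homotopy. Filteredness with respect to the Alexander filtration then follows directly from the formula for $A$: the presence of $X_j$ in $\mathrm{Int}(r)$ forces each term to have Alexander degree at most that of ${\bf s}$, so $H_{X_j}$ respects the filtration.

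To prove the identity, I would expand both sides. Each term corresponds to an ordered juxtaposition $(r_1, r_2)$ of two empty rectangles whose combined domain $D$ connects ${\bf s}$ to itself, with $X_j$ lying in exactly one of the two rectangles. Generically, such a $D$ admits exactly two ordered decompositions into empty rectangles, one contribution coming from $\partial^- H_{X_j}$ and the other from $H_{X_j} \partial^-$; these contributions cancel in pairs modulo $2$. The uncancelled contributions come from degenerate domains, which are precisely the thin horizontal and vertical annuli through $X_j$. The horizontal annulus through $X_j$ contains exactly one $O$ marking, namely the $O$ in the row of $X_j$; the vertical annulus contains the $O$ in the column of $X_j$. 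By the convention that segments run between $X$'s and $O$'s along the link, these two $O$ markings are exactly $O_i$ and $O_k$, yielding the claimed sum $\mathtt{\bf U}_i + \mathtt{\bf U}_k$.

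The main obstacle is the case analysis for the juxtaposed-rectangle domains. Two rectangles can interact in many configurations (disjoint, sharing a corner, sharing an edge, one contained in the other, overlapping on an interior region), and for each configuration one must identify its cancelling partner. The delicate step is to show that the only non-cancelling composite domains are the two annuli through $X_j$, and to verify that the emptiness condition on each factor allows the pairing to be performed consistently. This is the combinatorial heart of the argument, and is essentially a version of the classical boundary-of-moduli-space argument adapted to the purely combinatorial setting of grid diagrams.
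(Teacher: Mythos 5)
Your argument is correct, but note that the paper does not prove this lemma at all---it is quoted verbatim from the cited reference \cite{most}, and your proposal (reduction to $X$-adjacent $O$-markings, the homotopy operator counting empty rectangles containing $X_j$, and the juxtaposition/thin-annuli cancellation yielding $\mathtt{\bf U}_i + \mathtt{\bf U}_k$) is essentially the proof given there. So the approach is the standard one and matches the source the paper relies on.
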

This lemma allows us to view the homology of the complex $C^-(G)$ as a module over $\mathbb{Z}/2\mathbb{Z}[\mathtt{\bf U}_1,\dots,\mathtt{\bf U}_l]$, where $\mathtt{\bf U}_1,\dots,\mathtt{\bf U}_l$ correspond to $l$ different $O$-markings, each in a different component in the link.

\medskip

This construction gives a well-defined chain complex and leads to the following theorem of Manolescu, Ozsv\'ath, and Sarkar \cite{mos}.
\begin{theorem}
\label{MOS} Let $G$ be a grid presentation of a knot $K$. The data $(C^-(G),\partial^-)$ is a chain complex for the Heegaard-Floer homology $CF^-(S^3)$, with grading induced by $M$, and the filtration level induced by $A$ coincides with the link filtration of $CF^-(S^3)$.
\end{theorem}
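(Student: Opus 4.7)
The plan is to interpret $(C^-(G),\partial^-)$ as arising from a specific multi-pointed Heegaard diagram for $S^3$ whose basepoints are the grid markings, and then to match the combinatorial count of empty rectangles with the count of pseudo-holomorphic disks of Maslov index one.

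First I would construct the Heegaard diagram. The toroidal grid $G$ is realized as a genus-one Heegaard surface $T^2\subset S^3$ by attaching solid tori on each side; the horizontal grid circles bound discs in one handlebody and become the $\alpha$-curves, while the vertical grid circles bound discs in the other and become the $\beta$-curves. The $O$-markings furnish the basepoints required for $CF^-$ and the $X$-markings supply the second set of basepoints used to encode the link filtration associated to $L$. Under this identification, a Heegaard Floer generator (an $n$-tuple of intersection points, one on each $\alpha$- and $\beta$-circle) is exactly a combinatorial grid state $\mathbf{s}\in S_{\text{grid}}(G)$, so the chain modules already agree.

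Next I would match the bigrading. Using Lipshitz's index formula for an empty rectangle $r$ connecting $\mathbf{s}$ to $\mathbf{t}$, one checks the relation $M(\mathbf{s})-M(\mathbf{t})=1-2O(r)$ for the combinatorial Maslov function, and the analogous relation $A(\mathbf{s})-A(\mathbf{t})=X(r)-O(r)$ for the Alexander filtration. Since the absolute Maslov grading on $CF^-(S^3)$ and the link filtration are both pinned down by such relations together with a normalization on a distinguished top generator, the combinatorial and geometric gradings must agree.

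The hard part, and the analytic core of the theorem, will be identifying $\partial^-$ with the Heegaard Floer differential. The strategy is to choose a split complex structure on $\mathrm{Sym}^n(T^2)$ coming from a product complex structure on $T^2$, and then to use positivity of domains together with Lipshitz's index formula to force any Maslov-index-one domain to be an embedded rectangle; each empty rectangle then supports a unique holomorphic representative modulo $\mathbb{R}$-translation by a Riemann mapping argument. The main technical difficulties will be establishing admissibility of the Heegaard diagram, so that the differential is well-defined as a finite sum over the polynomial ring, and verifying transversality; both require careful perturbation arguments. Once these are in place, the invariance of Heegaard Floer homology under Heegaard moves identifies $(C^-(G),\partial^-)$ with the filtered chain complex computing $HFK^-(L)\otimes (\text{trivial factors from free basepoints})$, from which the theorem follows.
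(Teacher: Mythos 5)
The paper contains no proof of this statement: it is quoted as a theorem of Manolescu, Ozsv\'ath, and Sarkar and used as a black box, the citation to \cite{mos} being the entire argument. So there is nothing internal to compare your write-up against; what you have produced is a reconstruction of the strategy of the \emph{external} proof in \cite{mos} (realize the toroidal grid as a multi-pointed Heegaard diagram, identify generators with grid states, match gradings, and show that for a split almost complex structure the index-one holomorphic disks are exactly the empty rectangles). That strategy is the correct one, and within the present paper simply citing the result is the appropriate move.

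As a proof, however, your text is a plan rather than an argument, and the steps you defer are precisely where the content lies. First, the identification of the underlying three-manifold with $S^3$ cannot go through a classical genus-one Heegaard splitting, since there are $n$ parallel $\alpha$-circles on the torus which do not all bound disjoint disks in a solid torus; one needs the multi-pointed formalism, in which the $n$ basepoints correspond to the $n$ annular components of $T^2\setminus\alpha$ and the manifold is built from $\Sigma\times I$ by attaching two-handles along all $\alpha$- and $\beta$-circles and capping with balls. Second, the claim that positivity together with Lipshitz's formula $\mu(\phi)=e(\phi)+n_{\mathbf{s}}(\phi)+n_{\mathbf{t}}(\phi)$ ``forces any Maslov-index-one domain to be an embedded rectangle'' is the key combinatorial lemma of \cite{mos}; it requires an actual analysis of corner contributions showing that a positive index-one domain has exactly four $90^{\circ}$ corners and no interior points of $\mathbf{s}$, and is not automatic. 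Third, checking the grading relations $M(\mathbf{s})-M(\mathbf{t})=1-2O(r)$ and $A(\mathbf{s})-A(\mathbf{t})=X(r)-O(r)$ only on rectangles pins down the relative gradings only because transpositions generate the symmetric group, so that any two grid states are connected by a chain of rectangles; that observation, and the computation of the absolute normalization on an explicit generator, should be made explicit. None of these is a wrong turn --- each is carried out in \cite{mos} --- but none is supplied in your sketch.
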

Also, if each of the $\mathtt{\bf U}_i$ variables is set to $0$, this construction results in the ``hat" version of knot Floer homology. More specifically
$$H_*(C/\{\mathtt{\bf U}_i=0\}_{i=1}^n)\cong\widehat{HFK}(L)\otimes V^{n-1},$$
where $V$ is the two-dimensional bigraded vector space spanned by one generator in bigrading $(-1,-1)$ and one generator in bigrading $(0,0)$.

\medskip

Now that the details of the two chain complexes $(C_y^-(\Gamma),\partial_y^-)$ and $(C^-(G),\partial^-)$ have been presented, we develop the relationship between the complex associated to $\Gamma$ and the complexes associated to its projections.

\medskip

\begin{proposition}
\label{staterel} Let ${\bf s}$ be a cube state on a cube diagram $\Gamma$. Let $G_{xy}$ be the grid diagram associated to the $(x,y)$-projection. Then $\pi_{xy}({\bf s})$ is a state on $G_{xy}$. Moreover, $M_{xy}({\bf s})=M(\pi_{xy}({\bf s}))$ and $A_{xy}({\bf s})=A(\pi_{xy}({\bf s}))$. Similar statements hold for the grid diagrams associated to the $(y,z)$-projection and $(z,x)$-projection.
\end{proposition}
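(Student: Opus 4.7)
The strategy is to reduce both grading identities to a single geometric lemma about oriented segments in the cube diagram, and to dispatch the grid-state claim directly from the defining conditions on a cube state.

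First, I would reinterpret the cube-state condition coordinate-wise: the requirement that no two points of ${\bf s}$ share a face of any flat, applied to the large $n \times n$ faces of the $x$-, $y$-, and $z$-flats, says exactly that the $x$-, $y$-, and $z$-coordinates of the $n$ points of ${\bf s}$ each form a permutation of $\{0,1,\dots,n-1\}$. It follows immediately that $\pi_{xy}({\bf s})$ meets each horizontal and each vertical grid circle of $G_{xy}$ in exactly one point, which is the definition of a grid state.

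The key geometric lemma is that every oriented link segment of type $X \to Y$ is parallel to the $z$-axis. Indeed, each cube $X$-marking $X_i$ sits at the vertex of a bend inside its $x$-flat, so the outgoing segment from $X_i$ lies entirely in that $x$-flat; the terminal $Y_i$ sits at the vertex of a bend inside its own $y$-flat, so the same segment also lies in that $y$-flat. The intersection of an $x$-flat with a $y$-flat is a $1 \times 1 \times n$ cuboid, and any axis-parallel segment inside it must point along $z$. Therefore $\pi_{xy}(X_i) = \pi_{xy}(Y_i)$ for each pair matched by the link orientation, and hence $\pi_{xy}(\mathcal{X}) = \pi_{xy}(\mathcal{Y})$ as sets in $\mathbb{R}^2$. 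Combined with the projection convention for $G_{xy}$, this gives $\pi_{xy}(\mathcal{Z}) = \mathbb{X}$ and $\pi_{xy}(\mathcal{X}) = \pi_{xy}(\mathcal{Y}) = \mathbb{O}$.

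With these identifications in hand, the grading equalities become a direct substitution. Since $J_{xy}(A,B)$ depends only on the first two coordinates of its inputs, $J_{xy}(A,B) = J(\pi_{xy}(A), \pi_{xy}(B))$ for finite $A, B \subset \mathbb{R}^3$, and the equality extends bilinearly to formal differences. Substituting $\pi_{xy}(\mathcal{X}) = \mathbb{O}$ into the definition of $M_{xy}$ gives
\[
M_{xy}({\bf s}) = J_{xy}({\bf s} - \mathcal{X}, {\bf s} - \mathcal{X}) + 1 = J(\pi_{xy}({\bf s}) - \mathbb{O}, \pi_{xy}({\bf s}) - \mathbb{O}) + 1 = M(\pi_{xy}({\bf s})),
\]
and substituting $\pi_{xy}(\mathcal{Z}) = \mathbb{X}$ and $\pi_{xy}(\mathcal{X}) = \mathbb{O}$ into the definition of $A_{xy}$ yields $A_{xy}({\bf s}) = A(\pi_{xy}({\bf s}))$ by the same bookkeeping. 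The proofs for the $(y,z)$- and $(z,x)$-projections are entirely analogous, using the parallel facts that $Y \to Z$ segments are $x$-parallel (so $\pi_{yz}(\mathcal{Y}) = \pi_{yz}(\mathcal{Z})$) and $Z \to X$ segments are $y$-parallel (so $\pi_{zx}(\mathcal{Z}) = \pi_{zx}(\mathcal{X})$), each established by intersecting a pair of flats of the appropriate types. The only step requiring real geometric argument is the identification of segment directions in the key lemma; the rest is a mechanical match between formulas designed to agree.
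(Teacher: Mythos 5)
Your proof is correct and follows essentially the same route as the paper's: the state claim via the observation that a cube state is a triple of permutations (so its projection is a pair of permutations), and the grading identities via the fact that $J_{xy}(A,B)=J(\pi_{xy}(A),\pi_{xy}(B))$. The only addition is your explicit geometric lemma that the $X\to Y$ segments are $z$-parallel, forcing $\pi_{xy}(\mathcal{X})=\pi_{xy}(\mathcal{Y})=\mathbb{O}$ and $\pi_{xy}(\mathcal{Z})=\mathbb{X}$; the paper leaves this marking identification implicit in its earlier description of the projection, so your version is a useful, correct elaboration rather than a different argument.
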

\medskip

\begin{proof}
A cube state can be viewed as a set of three permutations of the set $\{0,1,\dots,n-1\}$. The first permutation gives the $x$-coordinates of each point in ${\bf s}$, the second permutation gives the $y$-coordinates of each point in ${\bf s}$, and the third permutation gives the $z$-coordinates of each point in ${\bf s}$. Similarly, a state in a grid diagram can be seen as two permutations of $\{0,1,\dots,n-1\}$, one gives the $x$-coordinate of each point in the state and one gives the $y$-coordinate of each point. Since ${\bf s}$ is equivalent to three permutations, it follows that $\pi_{xy}({\bf s})$ is equivalent to two permutations. Thus $\pi_{xy}({\bf s})$ is a state in the grid diagram $G$.

\medskip

If $A$ and $B$ are finite sets of points in $\mathbb{R}^3$ and $\pi_{xy}:\mathbb{R}^3\to \mathbb{R}^2$ is a projection map to the $(x,y)$-coordinate plane, then $J_{xy}(A,B)=J(\pi_{xy}(A),\pi_{xy}(B))$. The grading equivalences follow from this fact.
\end{proof}

\medskip

In fact, the set of cube states can be built out of the sets of grid states for two projections. Also, the chain complex associated to a cube diagram can be constructed from the chain complexes associated to the grid diagrams coming from two of the projections.

\medskip

\begin{theorem}
\label{compthm}
Let $\Gamma$ be a cube diagram and $G_{xy}$ and $G_{yz}$ be grid diagrams associated to the $(x,y)$-projection and $(y,z)$-projection of $\Gamma$. Let $(C^-(G_{xy}),\partial_{xy}^-)$ and $(C^-(G_{yz}),\partial_{yz}^-)$ be the  chain complexes associated to $G_{xy}$ and $G_{yz}$ respectively. Then
$$(C_y^-(\Gamma),\partial_y^-)\cong(C^-(G_{xy}),\partial_{xy}^-)\otimes(C^-(G_{yz}),\partial_{yz}^-).$$
A similar statement holds for $C_z^-(\Gamma)$ and $C_x^-(\Gamma)$.
\end{theorem}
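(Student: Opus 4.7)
The plan is to prove the isomorphism in four compatible steps: a bijection of generators, an identification of coefficient rings, a matching of bigradings, and a decomposition of the differential.

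First I would construct a bijection $\Phi \colon {\bf S} \to S_{\text{grid}}(G_{xy}) \times S_{\text{grid}}(G_{yz})$ given by ${\bf s} \mapsto (\pi_{xy}({\bf s}), \pi_{yz}({\bf s}))$. Proposition~\ref{staterel} already guarantees each projection is a grid state. For the inverse, the permutation description from the proof of Proposition~\ref{staterel} identifies a cube state with a pair of permutations $\sigma_y, \sigma_z$ of $\{0, \ldots, n-1\}$ (the assignments of $y$- and $z$-coordinates to the $x$-coordinate); then $\pi_{xy}({\bf s})$ records $\sigma_y$ and $\pi_{yz}({\bf s})$ records $\sigma_z \circ \sigma_y^{-1}$. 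Any pair of grid states determines $\sigma_y$ and $\sigma_z$ uniquely, so $\Phi$ is a bijection of sets.

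For the coefficient ring, the prescribed bigrading on $R_y$ forces the subalgebra generated by the $\mathtt{\bf X}_i$ to act nontrivially only on $(M_{xy}, A_{xy})$, and the subalgebra generated by the $\mathtt{\bf Y}_i$ only on $(M_{yz}, A_{yz})$. Under the projection conventions of Section~2, these subalgebras are precisely the coefficient rings for $C^-(G_{xy})$ and $C^-(G_{yz})$, yielding a bigraded ring isomorphism $R_y \cong R_{xy} \otimes R_{yz}$. Since $M_y = M_{xy} + M_{yz}$ and $A_y = A_{xy} + A_{yz}$ by definition, Proposition~\ref{staterel} gives $M_y({\bf s}) = M(\pi_{xy}({\bf s})) + M(\pi_{yz}({\bf s}))$ and analogously for $A_y$, so $\Phi$ is bigrading preserving.

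The main step is verifying that $\partial_y^-$ matches $\partial_{xy}^- \otimes 1 + 1 \otimes \partial_{yz}^-$ under $\Phi$. By definition, $\partial_y^-$ splits as a sum over empty $(x,y)$-cylinders and a sum over empty $(y,z)$-cylinders. For an $(x,y)$-cylinder $c$ from ${\bf s}$ to ${\bf t}$, the defining conditions force $\pi_{yz}({\bf s}) = \pi_{yz}({\bf t})$, so the second tensor factor is unchanged, and make $\pi_{xy}(c)$ a rectangle in $G_{xy}$ from $\pi_{xy}({\bf s})$ to $\pi_{xy}({\bf t})$ satisfying the grid-rectangle orientation condition. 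The key observation is that an $(x,y)$-cylinder is the full $z$-extent of its projected rectangle on the $3$-torus, so a cube-state point or cube marking lies in the interior of $c$ if and only if its $(x,y)$-projection lies in the interior of $\pi_{xy}(c)$. This simultaneously produces a bijection between empty $(x,y)$-cylinders and empty rectangles in $G_{xy}$, and the equality $X_i(c) = O_i(\pi_{xy}(c))$ between marking counts under the identification of variables. Therefore the $(x,y)$-cylinder sum contributes exactly $\partial_{xy}^- \otimes 1$, and the symmetric argument gives $1 \otimes \partial_{yz}^-$ for the $(y,z)$-cylinder sum.

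The main obstacle is the bookkeeping of the cube-to-grid marking correspondence under the projection conventions, and confirming that the empty-cylinder condition on the $3$-torus reduces cleanly to the empty-rectangle condition in the projected grid. Both follow from the fact that an $(x,y)$-cylinder is the pullback of its projected rectangle under $\pi_{xy}$, so emptiness and marking counts are detected entirely by planar data.
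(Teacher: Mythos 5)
Your proposal is correct and follows essentially the same route as the paper: a bijection between cube states and pairs of grid states (the paper writes the inverse map $\psi_y$, matching the shared $y$-coordinates), grading preservation via Proposition~\ref{staterel}, the identification of the $\mathtt{\bf U}_i$ variables with the $\mathtt{\bf X}_i$ and $\mathtt{\bf Y}_i$, and the observation that empty $(x,y)$-cylinders correspond exactly to empty rectangles in $G_{xy}$ (with matching marking counts) because a cylinder is the full $z$-extent over its projected rectangle. No substantive differences or gaps.
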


\medskip

\begin{proof}
Let ${\bf s}\in S_{\text{grid}}(G_{xy})$ be a grid state for $G_{xy}$ and ${\bf t}\in S_{\text{grid}}(G_{yz})$ be a grid state for $G_{yz}$. Then ${\bf s}$ can be written as ${\bf s}=\{(x_1,y_1),\dots,(x_n,y_n)\}$, and ${\bf t}$ can be written as ${\bf t}=\{(y_1,z_1),\dots,(y_n,z_n)\}$ where $\{x_i\}=\{y_i\}=\{z_i\}=\{0,1,\dots,n-1\}$. Define a map $\psi_y:C^-(G_{xy})\otimes C^-(G_{yz})\to C_y^-(\Gamma)$ by $s\otimes t\mapsto \{(x_1,y_1,z_1),\dots,(x_n,y_n,z_n)\}$. The map $\psi_y$ is a bijection on the generating set and thus extends to an isomorphism. The map is extended so that the $\mathtt{\bf U}_i$ variables coming from the $(x,y)$-projection are sent to the $\mathtt{\bf X}_i$ variables and the $\mathtt{\bf U}_i$ variables coming from the $(y,z)$-projection are sent to the $\mathtt{\bf Y}_i$ variables. Since $\pi_{xy}(\psi_y({\bf s}\otimes {\bf t}))={\bf s}$ and $\pi_{yz}(\psi_y({\bf s}\otimes {\bf t}))={\bf t}$, Proposition \ref{staterel} implies that $\psi_y$ preserves the gradings. Thus $C_y^-(\Gamma)\cong C^-(G_{xy})\otimes C^-(G_{yz})$ as $R_y$-modules.

It remains to show that $\partial_y^-(\psi_y({\bf s}\otimes{\bf t}))=\psi_y(\partial_{xy}^-\otimes\partial_{yz}^-({\bf s}\otimes {\bf t}))$. Observe that
$$\partial_{xy}^-\otimes\partial_{yz}^-({\bf s}\otimes {\bf t})  =  \partial_{xy}^-({\bf s})\otimes {\bf t} + {\bf s}\otimes\partial_{yz}^-({\bf t}).$$
The summand $\partial_{xy}^-({\bf s})\otimes {\bf t}$ counts empty rectangles in $G_{xy}$ connecting ${\bf s}$ to other states. An empty rectangle connecting ${\bf s}$ to some other state occurs in $G_{xy}$ precisely when $\Gamma$ has an empty $(x,y)$-cylinder connecting $\psi_y({\bf s}\otimes {\bf t})$ to some other cube state. Moreover, if the empty rectangle connecting ${\bf s}$ to ${\bf s^\prime}$ contains some marking $O_i$, then the corresponding empty $(x,y)$-cylinder contains the marking $X_i$. Therefore the coefficients of the states in the sum  agree (up to the isomorphism described above). A similar statement is true for the summand ${\bf s}\otimes\partial_{yz}^-({\bf t})$ replacing rectangles in $G_{xy}$ with rectangles in $G_{yz}$ and $(x,y)$-cylinders in $\Gamma$ with $(y,z)$-cylinders in $\Gamma$. Therefore $$\partial_y^-(\psi_y({\bf s}\otimes{\bf t}))=\psi_y(\partial_{xy}^-\otimes\partial_{yz}^-({\bf s}\otimes {\bf t})).$$
\end{proof}

\medskip

Theorem \ref{compthm} has many nice consequences. First, it implies that $\partial_y^-\circ\partial_y^-=0$. It also implies that $\partial_y^-$ lowers Maslov grading by one and keeps Alexander filtration level constant. In light of Theorem \ref{MOS}, one concludes that the filtered chain homotopy type of $(C_y^-(\Gamma),\partial_y^-)$ is a link invariant:

\medskip

\begin{corollary}
Let $\Gamma$ be a cube diagram representing $L$. Define the homology of $(C_y^-(\Gamma),\partial_y^-)$ to be $CH^-(L)=H_*(C_y^-(\Gamma),\partial_y^-)$. Then $CH^-(L) \cong HFK^-(L) \ot HFK^-(L)$. Moreover, since this homology only depends on $L$, we have
$$H_*(C_y^-(\Gamma),\partial_y^-)\cong H_*(C_z^-(\Gamma),\partial_z^-)\cong H_*(C_x^-(\Gamma),\partial_x^-).$$
\end{corollary}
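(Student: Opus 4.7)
The plan is to derive the corollary as an essentially immediate consequence of Theorem~\ref{compthm}, the K\"unneth formula, and Theorem~\ref{MOS}; nothing genuinely new has to be proved, one just has to carefully assemble the preceding results.

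First I would invoke Theorem~\ref{compthm} to write $(C_y^-(\Gamma),\partial_y^-) \cong (C^-(G_{xy}),\partial_{xy}^-) \otimes (C^-(G_{yz}),\partial_{yz}^-)$ as filtered chain complexes over $\mathbb{Z}/2\mathbb{Z}$. By the definitions in this section, the Maslov grading $M_y$ and the Alexander filtration $A_y$ were set up to be additive across the two projection factors, so the isomorphism of Theorem~\ref{compthm} respects the bigrading and the filtration, not merely the underlying module structure. Because $\mathbb{Z}/2\mathbb{Z}$ is a field and the complexes are free, the K\"unneth formula produces
$$H_*(C_y^-(\Gamma),\partial_y^-) \cong H_*(C^-(G_{xy}),\partial_{xy}^-) \otimes H_*(C^-(G_{yz}),\partial_{yz}^-)$$
with no Tor contribution.

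Next I would apply Theorem~\ref{MOS} to each tensor factor on the right. That theorem identifies the filtered chain homotopy type of each grid complex $(C^-(G),\partial^-)$ as a link invariant whose Alexander-filtered homology recovers the knot Floer homology $HFK^-(L)$. Substituting this identification into the K\"unneth decomposition above yields $CH^-(L) \cong HFK^-(L)\otimes HFK^-(L)$, which is the first assertion of the corollary.

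The second assertion is then automatic: running the same argument with the two alternative pairs of projections---the $(y,z)$ and $(z,x)$ projections for $C_z^-(\Gamma)$, and the $(z,x)$ and $(x,y)$ projections for $C_x^-(\Gamma)$---produces the same right-hand side $HFK^-(L)\otimes HFK^-(L)$, which depends only on the ambient isotopy class of $L$. Hence all three homologies are isomorphic. The one subtle bookkeeping point in the plan is to ensure that the K\"unneth isomorphism is compatible with the Alexander filtration (so that it descends to the associated graded pieces); once this is verified, the right-hand side is truly $HFK^-(L)\otimes HFK^-(L)$ as bigraded modules rather than only as ungraded ones.
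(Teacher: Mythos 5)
Your argument is essentially the paper's own: the corollary is deduced directly from Theorem~\ref{compthm} (the tensor decomposition over the two chosen projections) together with Theorem~\ref{MOS}, with the K\"unneth formula over $\mathbb{Z}/2\mathbb{Z}$ supplying the passage from the chain-level splitting to homology and the symmetry among the three pairs of projections giving the final chain of isomorphisms. Your closing caveat about compatibility of the K\"unneth isomorphism with the Alexander filtration is exactly the point the paper handles by noting that the identification of Theorem~\ref{compthm} preserves the (additively defined) gradings and filtration, so the two treatments agree.
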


The filtered chain homotopy type of $(C_y^-(\Gamma),\partial_y^-)$ can be checked to be a link invariant directly using Corollary~\ref{Moves_Corollary}.  Cube stabilizations and cube commutations correspond to grid stabilizations and grid commutations in the two chosen projections.

\medskip

If each of the $\mathtt{\bf X}_i$ and $\mathtt{\bf Y}_i$ variables are set to $0$, then we have the following corollary.
\begin{corollary}
Let $\Gamma$ be a cube diagram representing $L$ of size $n$ and
$$\widehat{CH}(L,n)=H_*(C_y^-(\Gamma)/\{\mathtt{\bf X}_i=\mathtt{\bf Y}_i=0\}_{i=1}^n).$$
Then
$$\widehat{CH}(L,n)\cong (\widehat{HFK}(L)\otimes V^{\otimes(n-1)})\otimes (\widehat{HFK}(L)\otimes V^{\otimes(n-1)}),$$
where $V$ is a $2$-dimensional vector space spanned by generators in bigradings $(-1,-1)$ and $(0,0)$.
\end{corollary}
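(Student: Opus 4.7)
The plan is to reduce this corollary directly to Theorem~\ref{compthm} together with the hat-version formula for grid diagrams that was recalled earlier (the one stating $H_*(C/\{\mathtt{\bf U}_i=0\}_{i=1}^n)\cong\widehat{HFK}(L)\otimes V^{n-1}$). No new combinatorial work on the cube complex itself is needed; everything can be pushed through the projection isomorphism.

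First I would observe that under the isomorphism $\psi_y:C^-(G_{xy})\otimes C^-(G_{yz})\xrightarrow{\cong} C_y^-(\Gamma)$ of Theorem~\ref{compthm}, the variables $\mathtt{\bf X}_i$ on the cube side correspond exactly to the $\mathtt{\bf U}_i$ variables of the $(x,y)$-projection, while the $\mathtt{\bf Y}_i$ variables correspond to the $\mathtt{\bf U}_i$ variables of the $(y,z)$-projection, as explicitly recorded in the proof of that theorem. Consequently, setting $\{\mathtt{\bf X}_i=\mathtt{\bf Y}_i=0\}_{i=1}^n$ in $C_y^-(\Gamma)$ corresponds, under $\psi_y$, to killing all $\mathtt{\bf U}_i$ in each tensor factor:
\[
C_y^-(\Gamma)/\{\mathtt{\bf X}_i=\mathtt{\bf Y}_i=0\}\;\cong\;\bigl(C^-(G_{xy})/\{\mathtt{\bf U}_i=0\}\bigr)\otimes\bigl(C^-(G_{yz})/\{\mathtt{\bf U}_i=0\}\bigr),
\]
and this isomorphism is filtered and bigrading-preserving because $\psi_y$ was.

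Next, since the ground ring is the field $\mathbb{Z}/2\mathbb{Z}$, the Künneth formula applies without Tor terms, so passing to homology commutes with the tensor product on the right-hand side. Proposition~\ref{projection} guarantees that both $G_{xy}$ and $G_{yz}$ are grid diagrams of the same oriented link $L$ and are of size $n$, so the hat-version grid formula recalled earlier gives
\[
H_*\bigl(C^-(G_{xy})/\{\mathtt{\bf U}_i=0\}\bigr)\cong\widehat{HFK}(L)\otimes V^{\otimes(n-1)},
\]
and likewise for $G_{yz}$. Combining with the previous display yields the claimed isomorphism.

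The only step that requires any care is the bookkeeping at the level of gradings: one must check that the tensor-product bigrading on the right matches the $(M_y,A_y)$-bigrading defined on the cube side. This follows directly from the definitions $M_y=M_{xy}+M_{yz}$ and $A_y=A_{xy}+A_{yz}$ together with Proposition~\ref{staterel}, which identifies the cube gradings on $\psi_y({\bf s}\otimes{\bf t})$ with the sums of the grid gradings on ${\bf s}$ and ${\bf t}$; the same identification transports to the $V$-factors because the bigradings $(-1,-1)$ and $(0,0)$ are defined additively on each grid factor. I expect no genuine obstacle in the argument; the principal thing to watch is that the quotient is taken variable-by-variable in a way compatible with $\psi_y$, which is immediate from how $\psi_y$ was defined on the polynomial coefficients in the proof of Theorem~\ref{compthm}.
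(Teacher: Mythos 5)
Your proposal is correct and follows essentially the same route the paper intends: apply the isomorphism of Theorem~\ref{compthm} to identify the quotient complex with the tensor product of the two grid complexes with all $\mathtt{\bf U}_i$ set to zero, invoke the K\"unneth formula over the field $\mathbb{Z}/2\mathbb{Z}$, and apply the hat-version grid formula to each factor. The paper treats this as an immediate consequence and gives no further argument, so your additional care with the grading bookkeeping is, if anything, more explicit than the original.
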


\section{Acknowledgements}
The authors  thank Brendan Owens for helpful discussions.  We would also like to thank the referee for helping us improve the exposition of our paper. S. Baldridge was partially supported by NSF grant DMS-0507857 and NSF Career Grant DMS-0748636.  A. Lowrance was partially supported by NSF VIGRE grant DMS-0739382.

\end{document}